\newcommand{\Sn}{\mathfrak{S}_n}
\newcommand{\RCT}[1]{\ensuremath{\mathrm{RCT}(#1)}}
\newcommand{\C}{\mathscr{C}}
\newcommand{\PW}{\ensuremath{\mathrm{ParseWords}}}
\newcommand{\rk}{\ensuremath{\mathrm{rank}}}
\newcommand{\meet}{\wedge}
\newcommand{\join}{\vee}
\newcommand{\bT}{\mathbb{T}}
\newcommand{\cJ}{\mathcal{J}}
\newenvironment{enumrom}{\begin{enumerate}}{\end{enumerate}}
\renewcommand{\textbf}{\emph}
\theoremstyle{plain}
\newtheorem{thm}{Theorem}[section]
\newtheorem{cor}[thm]{Corollary}
\newtheorem{lemma}[thm]{Lemma}
\newtheorem{prop}[thm]{Proposition}
\theoremstyle{definition}
\newtheorem{defn}[thm]{Definition}
\newtheorem*{example}{Example}
\theoremstyle{remark}
\newtheorem*{note}{Note}
\newtheorem*{remark}{Remark}
\title{On a Subposet of the Tamari Lattice}
\author{Sebastian A.\ Csar}
\address{School of Mathematics, University Of Minnesota, Minneapolis, MN 55455, USA}
\email{\href{mailto:csarx001@math.umn.edu}{csar@math.umn.edu}}
\author{Rik Sengupta}
\address{Department of Mathematics, Massachusetts Institute of Technology, MA 02139, USA}
\email{\href{mailto:rsengupt@mit.edu}{rsengupt@mit.edu}}
\author{Warut Suksompong}
\address{Department of Mathematics, Massachusetts Institute of Technology, MA 02139, USA}
\email{\href{mailto:warutsuk@mit.edu}{warutsuk@mit.edu}}
\begin{document}
\maketitle
\begin{abstract}
	We explore some of the properties of a subposet of the Tamari lattice introduced by Pallo, which we call the comb poset. We show that a number of binary functions that are not well-behaved in the Tamari lattice are remarkably well-behaved within an interval of the comb poset: rotation distance, meets and joins, and the common parse words function for a pair of trees. We relate this poset to a partial order on the symmetric group studied by Edelman.
\end{abstract}

\section{Introduction} \label{Section 1}

The set $\mathbb{T}_n$ of all full binary trees with $n$ leaves, or parenthesizations of $n$ letters, has been well-studied, and carries much structure. Its cardinality $|\mathbb{T}_n|$ is the $(n - 1)$\textsuperscript{th} Catalan number \[C_{n - 1} = \frac{1}{n} \binom{2n - 2}{n - 1}.\] The \emph{rotation graph}, $\mathscr{R}_n$, is the graph with vertex set $\mathbb{T}_n$, in which edges correspond to a local change in the tree called a \textbf{rotation}, corresponding to changing a single parenthesis pair in the parenthesization. This graph $\mathscr{R}_n$ forms the vertices and edges of an $(n - 2)$-dimensional convex polytope called the \textbf{associahedron}, $K_{n+1}$. If we direct the edges of $\mathscr{R}_n$ in a certain fashion, we obtain the Hasse diagram for the well-studied \textbf{Tamari lattice}, $\mathscr{T}_n$, on $\mathbb{T}_n$, shown below for $n = 4$.
\begin{center}
\begin{tikzpicture}
 
    \tikzstyle{every node}=[draw,circle,fill=black,minimum size=4pt,
                            inner sep=0pt]
    
    \draw(0,1) node(A) {};
    \draw(-3,2.5) node(B) {};
    \draw(-3,6) node(C) {};
    \draw(0,7.5) node(D) {};
    \draw(2.75,4.25) node(E) {};
    
    \draw[very thick,blue] (A) -- (B);
    \draw[very thick,blue] (B) -- (C);
    \draw[very thick,blue] (C) -- (D);
    \draw[very thick,blue] (D) -- (E);
    \draw[very thick,blue] (E) -- (A);

    \tikzstyle{every node}=[draw,circle,fill=black,minimum size=1pt,
                            inner sep=0pt]
    
    \draw(0.25,0) node(A1) {};
    \draw(0.75,0) node(A2) {};
    \draw(0,0.25) node(A3) {};
    \draw(0.5,0.25) node(A4) {};
    \draw(-0.25,0.5) node(A5) {};
    \draw(0.25,0.5) node(A6) {};
    \draw(0,0.75) node(A7) {};
    \draw (A1) -- (A4);
    \draw (A2) -- (A4);
    \draw (A3) -- (A6);
    \draw (A4) -- (A6);
    \draw (A5) -- (A7);
    \draw (A6) -- (A7);
    
    \draw(-4,2) node(B1) {};
    \draw(-3.5,2) node(B2) {};
    \draw(-3.75,2.25) node(B3) {};
    \draw(-3.25,2.25) node(B4) {};
    \draw(-4,2.5) node(B5) {};
    \draw(-3.5,2.5) node(B6) {};
    \draw(-3.75,2.75) node(B7) {};
    \draw (B1) -- (B3);
    \draw (B2) -- (B3);
    \draw (B3) -- (B6);
    \draw (B4) -- (B6);
    \draw (B5) -- (B7);
    \draw (B6) -- (B7);
    
    \draw(-4,5.5) node(C1) {};
    \draw(-3.5,5.5) node(C2) {};
    \draw(-4.25,5.75) node(C3) {};
    \draw(-3.75,5.75) node(C4) {};
    \draw(-4,6) node(C5) {};
    \draw(-3.5,6) node(C6) {};
    \draw(-3.75,6.25) node(C7) {};
    \draw (C1) -- (C4);
    \draw (C2) -- (C4);
    \draw (C3) -- (C5);
    \draw (C4) -- (C5);
    \draw (C5) -- (C7);
    \draw (C6) -- (C7);
    
    \draw(-0.5,7.75) node(D1) {};
    \draw(0,7.75) node(D2) {};
    \draw(-0.25,8) node(D3) {};
    \draw(0.25,8) node(D4) {};
    \draw(0,8.25) node(D5) {};
    \draw(0.5,8.25) node(D6) {};
    \draw(0.25,8.5) node(D7) {};
    \draw (D1) -- (D3);
    \draw (D2) -- (D3);
    \draw (D3) -- (D5);
    \draw (D4) -- (D5);
    \draw (D5) -- (D7);
    \draw (D6) -- (D7);
    
    \draw(3,4) node(E1) {};
    \draw(3.5,4) node(E2) {};
    \draw(3.75,4) node(E3) {};
    \draw(4.25,4) node(E4) {};
    \draw(3.25,4.25) node(E5) {};
    \draw(4,4.25) node(E6) {};
    \draw(3.625,4.5) node(E7) {};
    \draw (E1) -- (E5);
    \draw (E2) -- (E5);
    \draw (E3) -- (E6);
    \draw (E4) -- (E6);
    \draw (E5) -- (E7);
    \draw (E6) -- (E7);
    
\end{tikzpicture}
\end{center}
The Tamari lattice has many properties, but it has certain deficiencies. For instance, it is not ranked. Although one can encode the Tamari order by componentwise comparison of \emph{weight vectors} $\langle T \rangle \in \{0, 1, \ldots, n - 2\}^{n - 1}$ for $T \in \mathscr{T}_n$, introduced by Huang and Tamari in \cite{ht:bracketingvectors} for the lattice dual to $\mathscr{C}_n$, only the meet is given by the componentwise minimum of these weight vectors; the join cannot be characterized similarly. Furthermore, computing the \textbf{rotation distance} $d_{\mathscr{R}_n}(T_1, T_2)$ between two trees $T_1, T_2$ in the graph $\mathscr{R}_n$ does not appear to follow easily from knowing their meet and join in the Tamari lattice.

Relying on work of Whitney in~\cite{whitney}, Kauffman reformulated the Four Color Theorem using the vector cross product in~\cite{kauffman}. More recently, in~\cite{crz}, Cooper, Rowland and Zeilberger transformed the Four Color Theorem into a question about another binary function on $\mathbb{T}_n$: the size of the set $\text{ParseWords}(T_1, T_2)$ consisting of all words $w \in \{0, 1, 2\}^n$ which are \textbf{parsed} by both $T_1$ and $T_2$. Here, a word $w$ is parsed by $T$ if the labeling of the leaves of $T$ by $w_1, w_2, \ldots, w_n$ from left to right extends to a proper $3$-coloring with colors $\{0, 1, 2\}$ of \emph{all} $2n - 1$ vertices in $T$, such that no two children of the same vertex have the same label and such that no parent and child share the same label. The Four Color Theorem is equivalent to the statement that for all $n$ and all $T_1, T_2 \in \mathbb{T}_n$, one has $|\text{ParseWords}(T_1, T_2)| \geq 1$. Tamari offers a similar reformulation of the Four Color Theorem in~\cite{tamari}.

This last application to the Four Color Theorem motivated us to investigate a poset $\mathscr{C}_n$ on the set $\mathbb{T}_n$, which we call the \textbf{(right) comb order}, a weakening of the Tamari order. Pallo first defined $\mathscr{C}_n$ in \cite{palloposet}, where he proved that it is a meet-semilattice having the same bottom element as $\mathscr{T}_n$, called the right comb tree and denoted $\RCT{n}$. 
The solid edges in the diagram below form the Hasse diagram of $\mathscr{C}_4$. The dashed edge lies in $\mathscr{T}_4$ but not in $\mathscr{C}_4$.

\begin{center}
\begin{tikzpicture}
 
    \tikzstyle{every node}=[draw,circle,fill=black,minimum size=4pt,
                            inner sep=0pt]
    
    \draw(0,1) node(A) {};
    \draw(-3,2.5) node(B) {};
    \draw(-3,6) node(C) {};
    \draw(0,7.5) node(D) {};
    \draw(2.75,4.25) node(E) {};
    
    \draw[very thick,blue] (A) -- (B);
    \draw[very thick,blue] (B) -- (C);
    \draw[dashed,blue] (C) -- (D);
    \draw[very thick,blue] (D) -- (E);
    \draw[very thick,blue] (E) -- (A);

    \tikzstyle{every node}=[draw,circle,fill=black,minimum size=1pt,
                            inner sep=0pt]
    
    \draw(0.25,0) node(A1) {};
    \draw(0.75,0) node(A2) {};
    \draw(0,0.25) node(A3) {};
    \draw(0.5,0.25) node(A4) {};
    \draw(-0.25,0.5) node(A5) {};
    \draw(0.25,0.5) node(A6) {};
    \draw(0,0.75) node(A7) {};
    \draw (A1) -- (A4);
    \draw (A2) -- (A4);
    \draw (A3) -- (A6);
    \draw (A4) -- (A6);
    \draw (A5) -- (A7);
    \draw (A6) -- (A7);
    
    \draw(-4,2) node(B1) {};
    \draw(-3.5,2) node(B2) {};
    \draw(-3.75,2.25) node(B3) {};
    \draw(-3.25,2.25) node(B4) {};
    \draw(-4,2.5) node(B5) {};
    \draw(-3.5,2.5) node(B6) {};
    \draw(-3.75,2.75) node(B7) {};
    \draw (B1) -- (B3);
    \draw (B2) -- (B3);
    \draw (B3) -- (B6);
    \draw (B4) -- (B6);
    \draw (B5) -- (B7);
    \draw (B6) -- (B7);
    
    \draw(-4,5.5) node(C1) {};
    \draw(-3.5,5.5) node(C2) {};
    \draw(-4.25,5.75) node(C3) {};
    \draw(-3.75,5.75) node(C4) {};
    \draw(-4,6) node(C5) {};
    \draw(-3.5,6) node(C6) {};
    \draw(-3.75,6.25) node(C7) {};
    \draw (C1) -- (C4);
    \draw (C2) -- (C4);
    \draw (C3) -- (C5);
    \draw (C4) -- (C5);
    \draw (C5) -- (C7);
    \draw (C6) -- (C7);
    
    \draw(-0.5,7.75) node(D1) {};
    \draw(0,7.75) node(D2) {};
    \draw(-0.25,8) node(D3) {};
    \draw(0.25,8) node(D4) {};
    \draw(0,8.25) node(D5) {};
    \draw(0.5,8.25) node(D6) {};
    \draw(0.25,8.5) node(D7) {};
    \draw (D1) -- (D3);
    \draw (D2) -- (D3);
    \draw (D3) -- (D5);
    \draw (D4) -- (D5);
    \draw (D5) -- (D7);
    \draw (D6) -- (D7);
    
    \draw(3,4) node(E1) {};
    \draw(3.5,4) node(E2) {};
    \draw(3.75,4) node(E3) {};
    \draw(4.25,4) node(E4) {};
    \draw(3.25,4.25) node(E5) {};
    \draw(4,4.25) node(E6) {};
    \draw(3.625,4.5) node(E7) {};
    \draw (E1) -- (E5);
    \draw (E2) -- (E5);
    \draw (E3) -- (E6);
    \draw (E4) -- (E6);
    \draw (E5) -- (E7);
    \draw (E6) -- (E7);
    
\end{tikzpicture}
\end{center}

While the comb order $\mathscr{C}_n$ is a meet-semilattice whose meet $\wedge_{\mathscr{C}_n}$ does not in general coincide with the Tamari meet $\wedge_{\mathscr{T}_n}$, it fixes several deficiencies of $\mathscr{T}_n$ noted above:
\begin{itemize}

  \item $\mathscr{C}_n$ is ranked, with exactly $\binom{n + r - 2}{r} - \binom{n + r - 2}{r - 1}$ elements of rank $r$, $0 \leq r \leq n-2$ (see Theorem \ref{thm: |rank|}).

  \item $\mathscr{C}_n$ is \textbf{locally distributive}; each interval forms a distributive lattice (see Corollary \ref{cor: distributive lattice}).

\item If $T_1$ and $T_2$ have an upper bound in $\mathscr{C}_n$ (or equivalently, if they both lie in some interval), the meet $T_1 \wedge_{\mathscr{C}_n} T_2$ and join $T_1 \vee_{\mathscr{C}_n} T_2$ are easily described combinatorially in two different ways (see Corollary \ref{cor: distributive lattice} and Theorem \ref{thm: bracketing vectors}). These operations also coincide with the Tamari meet $\wedge_{\mathscr{T}_n}$ and Tamari join $\vee_{\mathscr{T}_n}$ (see Corollary \ref{cor: tamari meet = Tn meet}).

\item When trees $T_1, T_2$ have an upper bound in $\mathscr{C}_n$, one has (see Theorem \ref{thm: T1 + T2 - 2TR})
\begin{align*}
d_{\mathscr{R}_n}(T_1, T_2) &= \text{rank}(T_1) + \text{rank}(T_2) - 2 \cdot \text{rank}(T_1 \wedge_{\mathscr{C}_n} T_2) \\
&= 2 \cdot \text{rank}(T_1 \vee_{\mathscr{C}_n} T_2) - \left(\text{rank}(T_1) + \text{rank}(T_2)\right)\\
&=\text{rank}(T_1\vee_{\mathscr{C}_n}T_2)-\text{rank}(T_1\wedge_{\mathscr{C}_n}T_2),
\end{align*}
where, for any $T \in \mathbb{T}_n$, rank$(T)$ refers to the rank of $T$ in $\mathscr{C}_n$.

\item Furthermore, for $T_1, T_2$ having an upper bound in $\mathscr{C}_n$, one has (see Theorem \ref{thm: join-meet})
\begin{equation*}
\text{ParseWords}(T_1, T_2) = \text{ParseWords}(T_1 \wedge_{\mathscr{C}_n} T_2, T_1 \vee_{\mathscr{C}_n} T_2),
\end{equation*}
with cardinality $3\cdot 2^{n - 1 - k}$, where $k = \text{rank}(T_1 \vee_{\mathscr{C}_n} T_2) - \text{rank}(T_1 \wedge_{\mathscr{C}_n} T_2)$ (see Theorem \ref{thm: comparable PW}).

\end{itemize}
Lastly, Section~\ref{Section 6} discusses a well-known order-preserving surjection from the (right) weak order on the symmetric group $\mathfrak{S}_n$ to the Tamari poset $\mathscr{T}_{n + 1}$ and its restriction to an order-preserving surjection from $\mathscr{E}_n$ to $\mathscr{C}_{n + 1}$ (where $\mathscr{E}_n$ is a subposet of the weak order considered by Edelman in \cite{edelman}). Furthermore, this surjection is a distributive lattice morphism on each interval of $\mathscr{C}_{n+1}$ (see Theorem \ref{thm: B-Pdual}).

Because we will be mainly confining our attention for the rest of this paper to the poset $\mathscr{C}_n$, we will drop the subscripts from $\wedge$, $\vee$, $>$ and $<$ when we mean meet, join, greater than, and less than in $\mathscr{C}_n$ respectively. Furthermore, we will use rank$(T)$ to denote the rank of $T$ in $\mathscr{C}_n$. Much of our notation in Section \ref{Section 7} is from \cite{crz}.

\section{The Comb Poset and Distributivity} \label{Section 2} \label{sec:combposetdistrib}
In~\cite{ht:bracketingvectors}, Huang and Tamari describe the dual to the Tamari lattice in terms of \emph{binary bracketings}, which are the usual parenthesizations of the leaves of a binary tree. However, the comb poset is most readily defined in terms of a variation on this parenthesization.

First, recall the definition of the parenthesization of a binary tree.

\begin{defn}\label{def:fullparenthesization}
Suppose $T \in \bT_n$ and its leaves are labeled $a_1,\ldots,a_n$. The \emph{parenthesization} of $T$ is a set $\mathcal{P}$, whose elements are the subsets, $J$, of $\{a_1,\ldots,a_n\}$ such that $J=\{a_i<\ldots<a_j\}$ and $a_i<\ldots<a_j$ label the leaves of a subtree of $T$.
\end{defn}

\begin{prop}
	Suppose $T \in \bT_n$. Then, for $E \in \mathcal{P}$, either $|E|=1$ or $E=E_1\sqcup E_2$, where $E_1,E_2 \in \mathcal{P}$, $E_1 \cap E_2 =\emptyset$ and $E_1,E_2$ are unique. 
\end{prop}

\begin{defn}\label{defn: reduced parenthesization}
	For each $T \in \mathbb{T}_n$, the \emph{reduced parenthesization} of $T$ is denoted $RP_T$ and $RP_T=\mathcal{P}\smallsetminus \{ E \in \mathcal{P}| a_n \in E\}$. An element $E$ of the set $RP_T$ with $|E|>1$ is called a \emph{parenthesis pair}.
\end{defn}

As the name suggests, one can write the parenthesization and reduced parenthesization as parenthesizations of the sequence $a_1a_2\cdots a_n$, with the singleton sets of $\mathcal{P}$ and $RP_T$ not drawn. This convention will be used for the remainder of the paper. 

\begin{figure}[htbp]
\begin{center}
\begin{tikzpicture}

    \tikzstyle{every node}=[draw,circle,fill=black,minimum size=4pt,
                            inner sep=0pt]
    \draw (-1,0.5) node (21) {};
    \draw (0,0.5) node (22) [label=above:$a_4$]{};
    \draw (-0.5,1) node (23) {};
    \draw (0.5,0.5) node (25) {};
    \draw (1,1) node (26) {};
    \draw (0,0) node (27) [label=below:$a_5$]{};
    \draw (1.5,0.5) node (28) [label=below:$a_7$]{};
    \draw (1,0) node (29) [label=below:$a_6$]{};
    \draw (-0.5,0) node (30) [label=below:$a_3$]{};
    \draw (-1.5,0) node (31) [label=below:$a_2$]{};
    \draw (-0.25,2) node (32) {};
    \draw (-0.75,1.5) node (33) [label=below:$a_1$]{};
    \draw (0.25,1.5) node (24) {};
    \draw[blue] (21) -- (23);
    \draw[blue] (22) -- (23);
    \draw[blue] (25) -- (26);
    \draw[blue] (25) -- (27);
    \draw[blue] (23) -- (24);
    \draw[blue] (24) -- (26);
    \draw[blue] (26) -- (28);
    \draw[blue] (25) -- (29);
    \draw[blue] (21) -- (31);
    \draw[blue] (21) -- (30);
    \draw[blue] (24) -- (32);
    \draw[blue] (32) -- (33);

\end{tikzpicture}
\caption{}
\label{fig: example parenthesization}\label{fig:parenextree}
\end{center}
\end{figure}

\begin{example}\label{ex: reduced par}
	The full parenthesization of the tree in Figure~\ref{fig:parenextree} is $(a_1(((a_2a_3)a_4)((a_5a_6)a_7)))$ and its reduced parenthesization is $a_1((a_2a_3)a_4)(a_5a_6)a_7$.
\end{example}

\begin{remark}All $n$-leaf binary trees have a unique reduced parenthesization, since there is a bijection between the full parenthesization of a tree $T$ and its reduced parenthesization. The full parenthesization is recovered by pairing the two rightmost elements of $RP_T$ successively.
\end{remark}

\begin{prop}\label{prop: characterize RPTs}
	A collection $\mathcal{P}$ of subsets of $\{a_1,\ldots,a_n\}$ is the reduced parenthesization of a tree $T \in \mathbb{T}_n$ if and only if the following conditions hold:
	\begin{enumrom}
		\item for each $a_i$, $1\leq i < n$, there is a $P \in \mathcal{P}$ such that $a_i \in P$ and there is no $P \in \mathcal{P}$ such that $a_n \in P$
		\item for each $P \in \mathcal{P}$, if $a_i,a_k \in P$ and $i < j <k$, then $a_j \in P$
		\item for each $P \in \mathcal{P}$, either $|P|=1$ or there are $P_1,P_2 \in \mathcal{P}$, with $P_1 \cap P_2 =\emptyset$, such that $P =P_1 \sqcup P_2$
		\item if $P_1,P_2 \in \mathcal{P}$ with $P_1 \cap P_2 \ne \emptyset$, then either $P_1 \subset P_2$ or $P_2 \subset P_1$.
	\end{enumrom}
\end{prop}

When $P \in RP_T$ can be written as $P_1 \sqcup P_2$ for $P_1,P_2 \in RP_T$, $P_1$ and $P_2$ are called the \emph{factors} of $P$. Without loss of generality assume that for each $a_i \in P_1$ and $a_j \in P_2$ that $i<j$. Then $P_1$ is the \emph{left factor} and $P_2$ is the \emph{right factor} of $P$.

\begin{defn} \label{defn: RCT}
  For $n \geq 2$, the \textbf{right comb tree of order $n$}, denoted by $\RCT{n} \in \mathbb{T}_n$, is the $n$-leaf binary tree with $RP_T=\emptyset$, corresponding to $a_1a_2 \cdots a_n$. Similarly, the \textbf{left comb tree} of order $n$ is defined as the $n$-leaf binary tree corresponding to the reduced parenthesization $RP_T=\{\{a_1,\ldots,a_i\}\colon i=1,\ldots,n-1\}$, corresponding to $(((\cdots((a_1a_2)a_3)\cdots )a_{n - 2})a_{n - 1})a_n$.
\end{defn}

\begin{example}
  \RCT{5}, the right comb tree of order $5$, is shown below. The nodes labeled $a_1, \ldots, a_5$ are the leaves of the tree, and $b_6, \ldots, b_9$ are the internal vertices. Note that the structure of the left comb tree of order $5$ is given by the reflection of the right comb tree about the vertical axis.
\begin{center}
\begin{tikzpicture}
    \tikzstyle{every node}=[draw,circle,fill=black,minimum size=4pt,
                            inner sep=0pt]

    \draw (-1,1.5) node (01) [label=below:$a_1$]{};
    \draw (-0.5,2) node (02) [label=right:$b_6$]{};
    \draw (-0.5,1) node (03) [label=below:$a_2$]{};
    \draw (0,1.5) node (04) [label=right:$b_7$]{};
    \draw (0,0.5) node (05) [label=below:$a_3$]{};
    \draw (0.5,1) node (06) [label=right:$b_8$]{};
    \draw (0.5,0) node (07) [label=below:$a_4$]{};
    \draw (1,0.5) node (08) [label=right:$b_9$]{};
    \draw (1.5,0) node (09) [label=below:$a_5$]{};
    \draw[blue] (01) -- (02);
    \draw[blue] (03) -- (04);
    \draw[blue] (05) -- (06);
    \draw[blue] (07) -- (08);
    \draw[blue] (02) -- (04);
    \draw[blue] (04) -- (06);
    \draw[blue] (06) -- (08);
    \draw[blue] (08) -- (09);
    
\end{tikzpicture}
\label{RCT(4)}
\end{center}
\end{example}

\begin{defn}\label{def:combposet}
	For $n \geq 2$, the \emph{(right) comb poset} of order $n$ is the poset whose elements are $\mathbb{T}_n$ and with $T_1 \leq T_2$ if $RP_{T_1} \subseteq RP_{T_2}$.
\end{defn}

\begin{remark}
	One sees immediately that $\RCT{n}$ is the unique minimal element of $\mathscr{C}_n$ since its reduced parenthesization is the empty set.
\end{remark}

\begin{example}
The Hasse diagram of the right comb poset of order $5$ is shown in Figure \ref{fig: T_5}. For the sake of a cleaner diagram, the leaf $a_i$ is labeled by $i$ in Figure \ref{fig: T_5} for $i \in \{1, 2, 3, 4, 5\}$.
\begin{figure}[ht]
\begin{center}
\begin{tikzpicture}
    \tikzstyle{every node}=[draw,circle,fill=black,minimum size=7pt,
                            inner sep=0pt]
    \tikzstyle myBG=[line width=5pt,opacity=1.0]

\newcommand{\drawLine}[2]
{
    \draw[white,myBG]  (#1) -- (#2);
    \draw[black,very thick] (#1) -- (#2);
}

    \draw (0,0) node (00) [label=left:$12345$]{};
    \draw (-2.7,2.7) node (01) [label=left:$(12)345$]{};
    \draw (0,2.7) node (03) [label=right:$1(23)45$]{};
    \draw (2.7,2.7) node (02) [label=right:$12(34)5$]{};
    \draw (-5.4,5.4) node (04) [label=left:$((12)3)45$]{};
    \draw (-2.7,5.4) node (07) [label=left:$(1(23))45$]{};
    \draw (0,5.4) node (05) [label=right:$(12)(34)5$]{};
    \draw (2.7,5.4) node (08) [label=right:$1((23)4)5$]{};
    \draw (5.4,5.4) node (06) [label=right:$1(2(34))5$]{};
    \draw (-5.4,8) node (09) [label=left:$(((12)3)4)5$]{};
    \draw (-2.7,8) node (12) [label=left:$((1(23))4)5$]{};
    \draw (0,8) node (10) [label=right:$((12)(34))5$]{};
    \draw (2.7,8) node (13) [label=right:$(1((23)4))5$]{};
    \draw (5.4,8) node (11) [label=right:$(1(2(34)))5$]{};

    \draw[very thick] (00) -- (01);
    \draw[very thick] (00) -- (03);
    \draw[very thick] (00) -- (02);
    \draw[very thick] (01) -- (04);
    \draw[very thick] (01) -- (05);
    \drawLine{03}{07};
    \draw[very thick] (03) -- (08);
    \drawLine{02}{05};
    \draw[very thick] (02) -- (06);
    \draw[very thick] (04) -- (09);
    \draw[very thick] (07) -- (12);
    \draw[very thick] (05) -- (10);
    \draw[very thick] (08) -- (13);
    \draw[very thick] (06) -- (11);

\end{tikzpicture}
\caption{The Hasse diagram of $\mathscr{C}_5$}
\label{fig: T_5}
\end{center}
\end{figure}
\end{example}

\begin{prop} \label{prop: symmetry}
There is an order-preserving involution on $\mathscr{C}_n$.
\end{prop}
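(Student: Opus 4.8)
The plan is to exhibit an explicit nontrivial symmetry of $\mathscr{C}_n$: the one that ``mirrors'' the part of a reduced parenthesization lying before the final leaf $a_n$. Given $T \in \mathbb{T}_n$, write $RP_T = P\,a_n$, where $P$ is the parenthesization of $a_1,\dots,a_{n-1}$ obtained by deleting the trailing $a_n$; in general $P$ splits into several top-level factors, which are exactly the elements of $RP_T$ other than $a_n$. Let $P'$ be the word obtained from $P$ by reading it right-to-left, interchanging every ``$($'' with ``$)$'', and then relabeling leaves $a_i \mapsto a_{n-i}$, so that $P'$ again spells $a_1,\dots,a_{n-1}$ from left to right. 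Define $\sigma\colon\mathbb{T}_n\to\mathbb{T}_n$ by $RP_{\sigma(T)} := P'\,a_n$. (Pictorially: the subtrees $E_1,\dots,E_{m-1}$ hanging to the left of the path from the root of $T$ to its rightmost leaf $a_n$, read top to bottom, are the non-$a_n$ elements of $RP_T$, and $\sigma(T)$ is obtained by replacing that list by $\overline{E_{m-1}},\dots,\overline{E_1}$ --- reversed order, each subtree mirrored --- and relabeling leaves $a_i\mapsto a_{n-i}$.)

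First I would check that $\sigma$ is well defined, i.e.\ that $P'\,a_n$ really is a reduced parenthesization. The reverse of a balanced parenthesized word (with ``$($'' and ``$)$'' swapped) is again balanced, the relabeling restores increasing leaf order, and $a_n$ remains unenclosed at the far right; moreover reversal sends each parenthesis pair to a parenthesis pair, merely interchanging the left and right factor at each nesting level, so the requirement ``every pair encloses exactly two factors'' is preserved. Hence $P'\,a_n$ satisfies both conditions of Proposition~\ref{prop: characterize RPTs}, so $P'\,a_n = RP_{\sigma(T)}$ for a unique $\sigma(T)\in\mathbb{T}_n$ (recall that a tree is determined by its reduced parenthesization).

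Next I would verify that $\sigma$ is an involution and order-preserving. Performing the construction twice reverses $P$ twice and applies $a_i\mapsto a_{n-i}$ twice --- both are identities --- so $RP_{\sigma(\sigma(T))}=RP_T$ and $\sigma^2=\mathrm{id}$. For monotonicity, recall that $T_1\le T_2$ means, by Definition~\ref{defn: RCP}, that every parenthesis pair of $RP_{T_1}$ occurs in $RP_{T_2}$ (with matching factors, by the remarks after Proposition~\ref{prop: characterize RPTs}). Reverse-and-relabel is a bijection carrying the parenthesis pairs of $RP_T$ onto those of $RP_{\sigma(T)}$, and it is compatible with inclusion of reduced parenthesizations, since reversing a word commutes with deleting a matched pair of parenthesis symbols from it; consequently every parenthesis pair of $RP_{\sigma(T_1)}$ occurs in $RP_{\sigma(T_2)}$, that is, $\sigma(T_1)\le\sigma(T_2)$. (As a check: $\sigma$ fixes $\RCT{n}$, and on $\mathscr{C}_5$ it swaps $(12)345\leftrightarrow 12(34)5$ and $((12)3)45\leftrightarrow 1(2(34))5$, while fixing $1(23)45$, $(12)(34)5$ and $((12)(34))5$.)

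The one step that deserves real care is the well-definedness claim: one must make sure that mirroring the parenthesization cannot inadvertently leave three top-level factors inside some pair, which would violate Proposition~\ref{prop: characterize RPTs}. Once it is verified that reflection is an honest level-by-level operation on parenthesizations, the involution property and order-preservation follow formally.
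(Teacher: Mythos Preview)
Your map is exactly the paper's involution $\pi$: describing it as ``reverse the string before $a_n$, swap $($ with $)$, and relabel $a_i\mapsto a_{n-i}$'' is equivalent to saying that a parenthesis pair enclosing $a_i,\dots,a_j$ is sent to one enclosing $a_{n-j},\dots,a_{n-i}$, which is how the paper phrases it. Your argument is correct and simply more detailed than the paper's brief sketch; in particular your explicit check that reversal preserves the ``two factors per pair'' condition is the content the paper defers to Proposition~\ref{prop: characterize RPTs} without elaboration.
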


\begin{proof}
  For any tree $T$, take $RP_T$, and construct a new parenthesization $RP_{T'}$ as follows. For every parenthesis pair in $RP_T$ which encloses leaves $a_i$ through $a_j$, take $RP_{T'}$ to have a parenthesis pair enclosing leaves $a_{n - j}$ through $a_{n - i}$. It is not hard to see (using Proposition \ref{prop: characterize RPTs}) that $RP_{T'}$ corresponds to a tree $T'$. Define $\pi$ to be the map that takes $T$ to $T'$ as described above. Then, $\pi$ is an order preserving involution on $\mathscr{C}_n$.
\end{proof}

\begin{defn} \label{defn: right arm}
For n $\geq 2$, the \textbf{right arm} of a tree $T \in \mathbb{T}_n$ is the path induced by the vertices of $T$ that lie in the left subtree of no other vertex in $T$.
\end{defn}

To understand the properties of the intervals of $\mathscr{C}_n$, one needs to define another poset using $RP_T$. It is well known that the operation of ``pruning'' a tree, i.e.\ deleting the leaves, is a bijection between $n$-leaf binary trees and (possibly incomplete) binary trees with $n-1$ vertices.

\begin{defn}\label{def: rpposet}
	For a tree $T \in \mathbb{T}_n$, the \emph{reduced pruned poset} of $T$, denoted $P_T$ is the poset obtained by ordering by inclusion those elements of $RP_T$ which are not singleton sets. Its Hasse diagram is obtained by pruning $T$, removing the right arm and removing those edges incident to the right arm.
\end{defn}

\begin{example}
Consider the tree of Figure \ref{fig: example parenthesization}, given by reduced parenthesization $a_1((a_2a_3)a_4)(a_5a_6)a_7$. Figure~\ref{fig: treeposets} depicts its ``pruned'' form, the corresponding reduced pruned poset $P_T$ and $J(P_T)$.
\end{example}
\begin{figure}
  \begin{center}
    \begin{tikzpicture}
    \tikzstyle{vertex}=[draw,circle,fill=black,minimum size=4pt,
                                inner sep=0pt]
    \matrix{
      & \begin{scope}
      \node[vertex] (root) at (0,0) {};
      \node[vertex,below left of=root,label=left:$a_1$,xshift=-.25cm] (a1) {};
      \node[vertex,below right of=root,xshift=.25cm] (arm1) {};
      \node[vertex,below left of=arm1,xshift=-.25cm] (1) {};
      \node[vertex,below right of=1,label=left:$a_4$] (a4) {};
      \node[vertex,below left of=1] (2) {};
      \node[vertex,below left of=2,label=below:$a_2$,xshift=.2cm] (a2) {};
      \node[vertex,below right of=2,label=below:$a_3$,xshift=-.2cm] (a3) {};
      \draw (a1)--(root)--(arm1)--(1)--(2)--(a2);
      \draw(2)--(a3);
      \draw(1)--(a4);
      \node[vertex,below right of=arm1,xshift=.25cm] (arm2) {};
      \node[vertex,below left of=arm2] (3) {};
      \node[vertex,below right of=arm2,label=below:$a_7$] (a7) {};
      \node[vertex,below left of=3,label=below:$a_5$,xshift=.2cm] (a5) {};
      \node[vertex,below right of=3,label=below:$a_6$,xshift=-.2cm] (a6){};
      \draw(arm1)--(arm2)--(a7);
      \draw(arm2)--(3)--(a5);
      \draw(3)--(a6);
      \node[left of=a1,xshift=.25cm,yshift=-1cm] {$T$};
    \end{scope}
      & \\
      \begin{scope}[every node/.style={draw,circle,fill=black,minimum size=4pt,
                                  inner sep=0pt}]
        \node (root) at (0,0) {};
        \node[below right of=root] (1) {};
        \node[below left of=1] (2) {};
        \node[below right of=1] (3) {};
        \node[below left of=2] (4) {};
        \node[below left of=3] (5) {};
        \draw (root)--(1)--(2)--(4);
        \draw (1)--(3)--(5);
        \node[draw=none,fill=none,below right of=4] {$T$ pruned};
      \end{scope} & &
      \begin{scope}[every node/.style={draw,circle,fill=black,minimum size=4pt,
                                  inner sep=0pt}]
        \draw (0,0) node (X2) [label=right:$(a_2a_3)a_4$] {};
        \node[below left of=X2,label=right:$a_2a_3$] (X1) {};
        \node[below right of=X2,label=right:$a_5a_6$] (X3) {};
        \draw (X1)--(X2);
        \node[draw=none,fill=none,below right of=X1] {$P_T$};
      \end{scope} \\
      & \begin{scope}
      \tikzstyle{every node}=[draw,circle,fill=black,minimum size=4pt,
                                  inner sep=0pt]
      	\draw (1,0) node (01) [label=below:$\emptyset$]{};
      	\draw (0,1) node (02) [label=left:$\{a_2a_3\}$]{};
      	\draw (2,1) node (03) [label=right:$\{a_5a_6\}$]{};
      	\draw (-1,2) node (04) [label=left:$\{a_2a_3\text{,}(a_2a_3)a_4\}$]{};
      	\draw (1,2) node (05) [label=right:$\{a_2a_3\text{,}a_5a_6\}$]{};
      	\draw (0,3) node (06) [label=right:$\{a_2a_3\text{,}(a_2a_3)a_4\text{,}a_5a_6\}$]{};
      	\draw (01)--(02);
      	\draw (01)--(03);
      	\draw (02)--(04);
      	\draw (02)--(05);
      	\draw (03)--(05);
      	\draw (04)--(06);
      	\draw (05)--(06);
      	\node[draw=none,fill=none] (0,0) [label=below:$J(P_T)$]{};
    \end{scope}
    & \\
      };
    \end{tikzpicture}
\end{center}
\caption{}
\label{fig: treeposets}
\end{figure}

\begin{prop}\label{prop: elements = left subtrees}
For any tree $T \in \mathbb{T}_n$, the maximal elements of $P_T$ correspond to the left subtrees of the vertices of the right arm of $T$. 
\end{prop}

\begin{prop}\label{prop: JPT interval}
  For any $T \in \mathbb{T}_n$, the interval $[\RCT{n},T]_{\mathscr{C}_n}$ is isomorphic to the lattice of order ideals in the reduced pruned poset of $T$, ordered by inclusion. In other words, for any tree $T$,
\[
[\RCT{n},T]_{\mathscr{C}_n}\cong J(P_T)
\]
\end{prop}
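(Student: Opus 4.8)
The plan is to set up an explicit bijection between the elements of the interval $[\RCT{n}, T]_{\mathscr{C}_n}$ and the order ideals of $P_T$, and then check that it is an order isomorphism in both directions. The natural candidate map $\Phi$ sends a tree $S$ with $\RCT{n} \le S \le T$ to the set of parenthesis pairs in $RP_S$, viewed as a subset of the parenthesis pairs of $RP_T$. The condition $S \le T$ says exactly that $RP_S \subseteq RP_T$, so $\Phi(S)$ is a well-defined subset of the ground set of $P_T$; conversely, given an order ideal $I$ of $P_T$, I want to show that the collection of parenthesis pairs in $I$, inflated to a parenthesization of $a_1, \ldots, a_n$, is the reduced parenthesization $RP_S$ of a unique tree $S$, and that this $S$ satisfies $\RCT{n} \le S \le T$.

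\textbf{First} I would verify that $\Phi$ lands in $J(P_T)$, i.e.\ that $\Phi(S)$ is downward closed in $P_T$. This is where the structure of reduced parenthesizations does the work: if a parenthesis pair $J$ belongs to $RP_S$ and $J' \subsetneq J$ is a parenthesis pair of $RP_T$ nested inside $J$, then — because, by the remark following Proposition~\ref{prop: characterize RPTs}, the two factors of $J$ are the same in $RP_S$ and in $RP_T$, and each factor is itself fully parenthesized — the pair $J'$ must already appear inside $J$ in $RP_S$. Iterating, every parenthesis pair of $RP_T$ contained in some element of $RP_S$ is itself an element of $RP_S$, which is precisely the order ideal condition. \textbf{Next}, for the inverse direction, I would take an order ideal $I \subseteq P_T$ and argue that writing down the parenthesis pairs in $I$ (as parentheses inserted into $a_1 a_2 \cdots a_n$) produces a parenthesization satisfying the two conditions of Proposition~\ref{prop: characterize RPTs}: $a_n$ is enclosed by nothing (none of the pairs of $RP_T$ enclose $a_n$, hence neither do those in $I$), and each pair encloses exactly two factors. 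The latter again uses that $I$ is downward closed: a pair $J \in I$ has, in $RP_T$, a left factor and a right factor; the parenthesis pairs of $RP_T$ strictly inside those factors that lie in $I$ reconstitute exactly those two factors as (possibly further subdivided) valid parenthesized blocks, so $J$ still encloses precisely two factors in the sub-parenthesization. Hence $I$ corresponds to $RP_S$ for a unique $S \in \mathbb{T}_n$, and $RP_S \subseteq RP_T$ gives $S \le T$, while $S \ge \RCT{n}$ is automatic.

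\textbf{Then} it remains to observe that $\Phi$ and its inverse are mutually inverse (immediate from the constructions) and that $\Phi$ is order-preserving in both directions: $S \le S'$ in $\mathscr{C}_n$ means $RP_S \subseteq RP_{S'}$, which is exactly $\Phi(S) \subseteq \Phi(S')$, i.e.\ containment of order ideals. So $\Phi$ is an isomorphism of posets, and since $J(P_T)$ is the distributive lattice of order ideals, the interval inherits that lattice structure.

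\textbf{The main obstacle} I anticipate is the careful bookkeeping in the two ``exactly two factors / downward closed'' arguments — making precise the claim that restricting $RP_T$ to a downward-closed set of parenthesis pairs never destroys the binary-factorization property, and conversely that a sub-reduced-parenthesization forces downward closure. Both hinge on the rigidity noted in the remarks after Proposition~\ref{prop: characterize RPTs}: once a parenthesis pair is present, its two factors are determined, so nesting relations among parenthesis pairs of $RP_T$ transfer verbatim to any $RP_S \subseteq RP_T$. Proposition~\ref{prop: elements = left subtrees} can be invoked to give a cleaner tree-theoretic picture (elements of $RP_T$ other than $a_n$ are the dangling left subtrees along the right arm), but the combinatorial argument on parenthesizations should already be self-contained. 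The rest is routine.
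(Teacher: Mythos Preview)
Your proposal is correct and follows exactly the same approach as the paper: the map $S \mapsto \{\text{parenthesis pairs of } RP_S\}$ with inverse $I \mapsto S$ where $RP_S$ has precisely the parentheses in $I$. The paper's proof is far terser, simply asserting well-definedness and order-preservation and leaving the downward-closure and two-factor verifications you spell out as implicit consequences of the definition of $\mathscr{C}_n$.
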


\begin{proof}One has a natural map $J(P_T)\to [\RCT{n},T]$, given by $I \mapsto S$, where $S$ is the tree with $RP_S$ having precisely the parentheses in $I$. The definition of the order on $\C_n$ ensures this map is both well-defined and order-preserving. Furthermore, this map has an inverse $[\RCT{n},T]\to J(P_T)$ given by $S\mapsto \{E \in RP_S \colon |E|>1\}$, which is again order-preserving.
\end{proof}

This proposition yields a number of immediate corollaries.
\begin{cor}\ 
  \begin{enumrom}[ref={\thecor(\roman*)}]
	\item \label{cor: distributive lattice}
 Any interval in $\mathscr{C}_n$ is a distributive lattice, with the reduced parenthesizations of the join and meet of trees $T_1$ and $T_2$ in an interval given by the ordinary union and intersection of parenthesis pairs from $RP_{T_1}$ and $RP_{T_2}$.

	\item \label{cor: cover in Tn}
In $\mathscr{C}_n$, $T_1$ covers $T_2$ if and only if $RP_{T_1}$ can be obtained from $RP_{T_2}$ by adding one parenthesis pair.

	\item  \label{cor: rank = PP}
$\mathscr{C}_n$ is a ranked poset, with the rank of any tree $T$ in $\mathscr{C}_n$ given by the number of parenthesis pairs in $RP_T$.

	\item \label{cor: r(T1) + r(T2)}
 For any two trees $T_1$ and $T_2$ that are in the same interval of $\mathscr{C}_n$, we have 
 \[
 \text{rank}(T_1) + \text{rank}(T_2) = \text{rank}(T_1 \wedge T_2) + \text{rank}(T_1 \vee T_2)
 \]

	\item \label{cor: c + k = n - 1}
 For any tree $T \in \mathbb{T}_n$ of rank $k$, the length of the right arm of $T$ is $ n - 1 - k$.
  \end{enumrom}
\end{cor}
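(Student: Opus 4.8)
The plan is to work with the \emph{full} parenthesization of $T$ rather than $RP_T$ directly, and to track which parenthesis pairs are lost in the passage to the reduced parenthesization. Recall that the full parenthesization of an $n$-leaf binary tree has exactly $n-1$ parenthesis pairs, one for each internal vertex, where the pair corresponding to an internal vertex $v$ encloses precisely the leaves in the subtree rooted at $v$. By Corollary~\ref{cor: rank = PP}, $\mathrm{rank}(T)=k$ is the number of parenthesis pairs that survive in $RP_T$, so it suffices to prove that the number of pairs \emph{deleted} in forming $RP_T$ — that is, the number of pairs enclosing the leaf $a_n$ — equals the length of the right arm of $T$.

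The key step is to identify the deleted pairs with the internal vertices of the right arm. A parenthesis pair encloses $a_n$ exactly when $a_n$ lies in the subtree rooted at the corresponding internal vertex $v$, i.e.\ exactly when $v$ is a proper ancestor of $a_n$. Since $a_n$ is the rightmost leaf, the path from the root of $T$ to $a_n$ never descends into a left subtree, so every vertex on this path lies in the left subtree of no other vertex; conversely, any vertex \emph{off} this path does lie in some left subtree (namely, at the point where the root-to-vertex path first turns left). Hence, by Definition~\ref{defn: right arm}, the right arm of $T$ is exactly this root-to-$a_n$ path, and its internal vertices are precisely the proper ancestors of $a_n$. Writing $m$ for the number of such vertices, forming $RP_T$ deletes exactly $m$ pairs, so $k=(n-1)-m$, i.e.\ $m = n-1-k$.

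Finally I would convert ``$m$ internal vertices on the right arm'' into ``length $m$'': the right arm is a path that passes through these $m$ internal vertices and terminates at the leaf $a_n$, so it has $m+1$ vertices and therefore $m$ edges, giving length $m = n-1-k$, as claimed. The only point that needs genuine care is the identification in the second paragraph — that the root-to-$a_n$ path coincides with the right arm as defined in Definition~\ref{defn: right arm} — and after that the argument is pure bookkeeping. (One could instead route the count through Proposition~\ref{prop: elements = left subtrees}, matching right-arm internal vertices with dangling left subtrees; but then single-leaf elements of $RP_T$, which contribute no parenthesis pair, must be accounted for separately, so I would prefer the full-parenthesization argument above.)
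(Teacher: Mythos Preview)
Your argument for part~(v) is correct. The identification of the right arm with the root-to-$a_n$ path is justified exactly as you say, and once that is in hand the count $(n-1)-m=k$ via the full parenthesization is clean and correct.

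The paper supplies no explicit proof of any of (i)--(v); it simply declares them immediate consequences of Proposition~\ref{prop: JPT interval}, the isomorphism $[\RCT{n},T]_{\mathscr{C}_n}\cong J(P_T)$. If one unwinds part~(v) through that proposition, one gets essentially your count in different dress: the pruned tree has $n-1$ vertices, $P_T$ is what remains after deleting the right-arm vertices of the pruned tree (there are $m$ of them, one for each internal vertex on the right arm of $T$), and $|P_T|=k$ since $J(P_T)$ has rank $|P_T|$. So $k=(n-1)-m$. Your full-parenthesization argument is the same bijection read on the parenthesis side rather than the vertex side; neither route is more elementary than the other. Your closing remark about Proposition~\ref{prop: elements = left subtrees} is apt: that proposition counts \emph{elements} of $RP_T$, not parenthesis pairs, so it matches right-arm internal vertices with dangling left subtrees but does not by itself yield~$k$ without the further bookkeeping you mention.

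One small note: your proposal addresses only~(v), taking~(iii) as given. Parts (i)--(iv) really are immediate from the $J(P_T)$ description --- they are the standard facts that a lattice of order ideals is distributive and graded by cardinality, with meet and join given by intersection and union --- so nothing of substance is missing there.
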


\begin{remark}
  It is important to note that Corollary~\label{cor:distributive lattice} presumes pairs of parentheses have knowledge of their factors when talking about the ``ordinary'' union and intersection. For example, the trees with reduced parenthesizations $(a(bc))d$ and $((ab)c)d$ do not have a join, as, in one case, the factors of $\{a,b,c\}$ are $\{a\}$ and $\{b,c\}$, while in the other the factors are $\{a,b\}$ and $\{c\}$. Similarly, the meet of these two trees is the right comb tree, having reduced parenthesization $abcd$.
\end{remark}

\begin{note}
In the remainder of this paper, we shall consider only the right comb poset of order $n$. Analogous results hold for the left comb poset by symmetry.
\end{note}

\begin{remark}\label{rmk: pallodefs}

  A \emph{left rotation} is the following operation on a tree, which takes place in a subtree with root $r$:
  \begin{center}
   \begin{tikzpicture}
    \node[draw,circle,fill=black,minimum size=4pt,inner sep=0pt] (r) at (0,2) [label=above:$r$] {};
    \node[draw,circle,fill=black,minimum size=4pt,inner sep=0pt] (i) at (1,1) {};
    \node[draw,circle,fill=white, minimum size=10pt, inner sep=0pt] (A) at (-1,1) [label=below:$A$] {};
    \node[draw,circle,fill=white, minimum size=10pt, inner sep=0pt] (B) at (0,0) [label=below:$B$] {};
    \node[draw,circle,fill=white, minimum size=10pt, inner sep=0pt] (C) at (2,0) [label=below:$C$] {};
    \draw (r)--(i);
    \draw (r)--(A);
    \draw (i)--(B);
    \draw(i)--(C);

    \path[draw,thick,black,->] (2,1) -- (4,1);

      \node[draw,circle,fill=black,minimum size=4pt,inner sep=0pt] (r1) at (6,2) [label=above:$r$] {};
    \node[draw,circle,fill=black,minimum size=4pt,inner sep=0pt] (i1) at (5,1) {};
    \node[draw,circle,fill=white, minimum size=10pt, inner sep=0pt] (A1) at (4,0) [label=below:$A$] {};
    \node[draw,circle,fill=white, minimum size=10pt, inner sep=0pt] (B1) at (6,0) [label=below:$B$] {};
    \node[draw,circle,fill=white, minimum size=10pt, inner sep=0pt] (C1) at (7,1) [label=below:$C$] {};
    \draw (r1)--(i1);
    \draw (r1)--(C1);
    \draw (i1)--(B1);
    \draw(i1)--(A1);
   \end{tikzpicture}
  \end{center}
  \emph{Right arm rotations} are those where $r$ lies on the right arm of the tree.	The covering relation described in Corollary~\ref{cor: cover in Tn} corresponds right arm rotation, precisely the covering relation used by Pallo in~\cite{palloposet} to define the poset $(B_n, \stackrel{*}{\leadsto})$, which he showed to be a meet-semilattice~~\cite[Lemma 3]{palloposet}.
\end{remark}

\section{Rank Sizes in the Comb Poset} \label{Section 4}\label{sec: properties}\label{sec: rank sizes}
In this section, we will prove some enumerative properties of the ranks of $\mathscr{C}_n$. To simplify notation, let $Q_i$ denote the $i$\textsuperscript{th} rank of $\mathscr{C}_n$.

\begin{prop} \label{prop: up maps}
For $0 \leq i \leq n - 2$, every tree in $Q_i$ is covered by precisely $n - 2 - i$ trees.
\end{prop}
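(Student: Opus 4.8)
The plan is to count the parenthesis pairs one can add to $RP_T$ while keeping it a valid reduced parenthesization, using the characterization in Proposition~\ref{prop: characterize RPTs} together with Corollary~\ref{cor: cover in Tn}, which says that $T$ is covered by exactly those trees whose reduced parenthesization is obtained from $RP_T$ by inserting a single new parenthesis pair. So the statement reduces to showing that, for a tree $T$ of rank $i$, there are exactly $n-2-i$ distinct ways to insert one parenthesis pair into $RP_T$ and still satisfy the two conditions of Proposition~\ref{prop: characterize RPTs}: $a_n$ remains unenclosed, and every parenthesis pair (old and new) encloses exactly two factors.

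First I would use Proposition~\ref{prop: elements = left subtrees} (or equivalently Corollary~\ref{cor: c + k = n - 1}): the right arm of $T$ has length $n-1-i$, so $RP_T$ has exactly $n-i$ elements $E_1, E_2, \ldots, E_{n-i}$ written left to right, where $E_{n-i} = a_n$. A new parenthesis pair, to produce a valid $RP$, must enclose exactly two factors and must not enclose $a_n$; the cleanest way to see which insertions are legal is to recall (from the Remark after Proposition~\ref{prop: characterize RPTs}) that the full tree is recovered from $RP_T$ by successively pairing the two rightmost elements. Adding a legal parenthesis pair corresponds exactly to grouping two \emph{consecutive} elements $E_j, E_{j+1}$ of $RP_T$ into a single new element $(E_j E_{j+1})$, for $1 \le j \le n-i-1$, since any such grouping yields a parenthesization in which every pair still encloses two factors and $a_n$ is still free, and conversely any single-pair insertion that keeps $a_n$ unenclosed and preserves the two-factor condition must group two adjacent top-level elements. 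That gives $n-i-1$ candidate covers, but the grouping $j = n-i-1$, which would pair $E_{n-i-1}$ with $a_n$, is \emph{not} allowed since it encloses $a_n$ — wait, more carefully: grouping $E_{n-i-1}$ with $E_{n-i}=a_n$ produces a pair enclosing $a_n$, violating the first condition, so it is excluded. Hence there are $(n-i-1) - 1 = n-2-i$ legal insertions, each giving a distinct tree covering $T$, which is the desired count.

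I would also double-check the edge cases: when $i = n-2$, the right arm has length $1$, $RP_T$ has two elements (the single top pair and $a_n$), and indeed $n-2-i = 0$, consistent with $T$ being maximal; when $i = 0$, $T = \RCT{n}$, $RP_T = a_1 a_2 \cdots a_n$ has $n$ elements, and the legal groupings are $(a_j a_{j+1})$ for $1 \le j \le n-2$, giving $n-2$ covers, matching $n-2-i$. A small point worth stating explicitly is that distinct groupings give distinct reduced parenthesizations, hence distinct trees, which is immediate since the multiset of top-level elements differs.

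The main obstacle is the converse direction of the ``only legal insertions are adjacent groupings'' claim: I need to argue carefully that inserting a parenthesis pair in any way other than grouping two consecutive top-level elements of $RP_T$ either encloses $a_n$ or breaks the two-factors-per-pair condition of Proposition~\ref{prop: characterize RPTs}. The key observations are that a new pair cannot be inserted \emph{strictly inside} an existing element $E_j$ without changing $E_j$'s internal structure in a way that requires more than one new pair (since $RP_T$ restricted to $E_j$ is already a valid reduced-parenthesization-like object with every pair having two factors, and wedging in one more pair forces some existing pair to acquire three factors or forces a second insertion), and that a new top-level pair enclosing three or more consecutive elements would itself fail the two-factor condition, while one enclosing a non-contiguous set of elements is not even a well-formed parenthesization. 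Once these are spelled out, the count $n-2-i$ follows.
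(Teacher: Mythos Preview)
Your argument is correct and is essentially the paper's proof, just phrased in the parenthesization language rather than the rotation language: the paper simply notes that a tree in $Q_i$ has a right arm with $n-i$ vertices (Corollary~\ref{cor: c + k = n - 1}), so there are $n-2-i$ internal right-arm vertices at which a left rotation can be performed, each giving a distinct cover. Your count of adjacent top-level pairs $(E_j,E_{j+1})$ with $j\le n-i-2$ is exactly the same count, and the ``converse'' step you worry about becomes automatic in the rotation picture (via Remark~\ref{rmk: pallodefs}), since covers are \emph{defined} as right-arm rotations.
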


\begin{proof}
This fact follows from the definition of rotation, and the observation that a tree in rank $Q_i$ has, by Corollary \ref{cor: c + k = n - 1}, a right arm of length $n - 1 - i$, i.e.\ $n - i$ vertices.
\end{proof}

\begin{thm} \label{thm: |rank|}
  \label{prop: maxrank}
  For $n \geq 3$, $\mathscr{C}_n$ is a ranked poset. A tree is a maximal element of $\mathscr{C}_n$ if and only if it is of rank $n-2$ in $\mathscr{C}_n$ (or equivalently, from Corollary \ref{cor: c + k = n - 1}, if and only if its right arm has length $1$). In particular, the left comb tree is in the maximal rank of $\mathscr{C}_n$. Furthermore, for $0 \leq r \leq n - 2$, the number of elements in rank $r$ of $\mathscr{C}_n$ is $$|Q_r| = \binom{n + r - 2}{r} - \binom{n + r - 2}{r - 1}.$$
\end{thm}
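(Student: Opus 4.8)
The plan is to establish the three assertions in turn, with the enumeration being the substantive part. The first two claims are quick: a tree $T$ is maximal in $\mathscr{C}_n$ exactly when no parenthesis pair can be added to $RP_T$, which by Proposition~\ref{prop: characterize RPTs} happens iff $RP_T$ has only one element besides $a_n$ (so that pairing the two rightmost elements is the only move available), i.e.\ the right arm has length $1$; by Corollary~\ref{cor: c + k = n - 1} this is rank $n-2$. The left comb tree has reduced parenthesization $(((\cdots(a_1a_2)a_3)\cdots)a_{n-2})a_{n-1})a_n$, which has $n-2$ parenthesis pairs, hence rank $n-2$ by Corollary~\ref{cor: rank = PP}. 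So the poset is ranked with top rank $n-2$.

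For $|Q_r|$, the cleanest route is to count trees of rank $r$ directly via their reduced parenthesizations. By Corollary~\ref{cor: rank = PP}, $|Q_r|$ is the number of valid reduced parenthesizations of $a_1,\ldots,a_n$ using exactly $r$ parenthesis pairs. First I would record a bijective reformulation: by Proposition~\ref{prop: elements = left subtrees}, a reduced parenthesization corresponds to choosing a composition of the first $n-1$ leaves into consecutive blocks (the left subtrees hanging off the right arm, read left to right), together with, on each block of size $\ge 2$, a full binary tree structure; the total number of parenthesis pairs is then $\sum (\text{block size} - 1)$ over blocks that... no — more carefully, each block of size $m$ contributes $m-1$ internal pairs in its own parenthesization, but the block itself when $m\ge 2$ is itself one element/pair, already counted; the upshot is that rank $r$ means the right arm has $n-1-r$ left subtrees covering $n-1$ leaves with a total of $r$ internal nodes among them. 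I would instead phrase it as: sequences of full binary trees (forests) with a total of $n-1$ leaves and a total of $r$ internal nodes. So $|Q_r| = $ number of ordered forests of full binary trees on $n-1$ leaves total with $r$ internal vertices total.

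The counting itself: I would use the generating-function/Catalan ballot machinery. Let $c(x,y)=\sum_{k\ge 0} c_k\, x^{k+1} y^k$ where $c_k$ is the $k$th Catalan number, the generating function for a single full binary tree marking leaves by $x$ and internal nodes by $y$; it satisfies $c = x + y\,c^2$. A forest of such trees has generating function $\frac{1}{1-c}$, and I want $[x^{n-1}y^r]\frac{1}{1-c}$. Using the Lagrange inversion / cycle-lemma identity for $\frac{1}{1-c}$ (equivalently, these are "ballot numbers"), one gets $[x^{m}y^{r}]\frac{1}{1-c} = \frac{m-r}{m}\binom{m+r-1}{r} = \binom{m+r-1}{r}-\binom{m+r-1}{r-1}$. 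Substituting $m=n-1$ yields $\binom{n+r-2}{r}-\binom{n+r-2}{r-1}$, as claimed. Alternatively, avoiding generating functions, I would give a direct lattice-path bijection: reduced parenthesizations of rank $r$ biject with certain lattice paths from $(0,0)$ to $(n-1+r,\,n-1-r)$ (or Dyck-like paths with $n-1$ up-steps and $r$ down-steps staying weakly positive), counted by the ballot number $\frac{n-1-r}{n-1}\binom{n-1+r}{r}$, and then simplify.

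The main obstacle is pinning down the precise bijection between rank-$r$ trees and the combinatorial objects being counted, and verifying that "number of parenthesis pairs in $RP_T$" translates exactly to "number of down-steps" (or "internal nodes in the forest") under it — the off-by-one bookkeeping between a block being an element of $RP_T$ and its internal pairs is the delicate point. Once that correspondence is nailed down, the enumeration is the standard ballot-number computation, which I would present either through the $c=x+yc^2$ functional equation plus Lagrange inversion or via a reflection-principle count of the associated lattice paths; I expect the generating-function derivation to be the shorter writeup. I would also sanity-check against small cases, e.g.\ $n=5$: rank $0$ gives $1$, rank $1$ gives $\binom{4}{1}-\binom{4}{0}=3$, rank $2$ gives $\binom{5}{2}-\binom{5}{1}=5$, matching Figure~\ref{fig: T_5}, and the totals sum to the Catalan number $C_4=14$.
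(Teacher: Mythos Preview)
Your proposal is correct and follows essentially the same route as the paper. Both arguments decompose a rank-$r$ tree into the ordered forest of left subtrees hanging off the right arm (with $n-1$ leaves and $r$ internal nodes total, hence $n-1-r$ components), and both finish with Lagrange inversion; the paper packages this as $[x^r]\,c(x)^{n-r-1}$ with the number of components fixed up front, while you use the bivariate forest generating function $\tfrac{1}{1-c(x,y)}$ and extract $[x^{n-1}y^r]$, but the content is the same and your mid-paragraph worry about the bookkeeping resolves exactly as you suspected (a block of size $m$ contributes $m-1$ parenthesis pairs, so the component count is forced to be $n-1-r$).
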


The authors thank an anonymous referee for pointing out the following combinatorial proof.

\begin{proof}
	Suppose $n \geq 3$ and $T$ is in rank $r$ of $\mathscr{C}_n$. Then $RP_T$ has $r$ pairs of parentheses, which are completely determined by the open parentheses as a consequence of Proposition~\ref{prop: characterize RPTs}. Furthermore, when viewing $RP_T$ as a parenthesization of $a_1\cdots a_n$ and reading from right to left there are always at least two more $a_i$ than open parentheses, as there can be no open parenthesis immediately preceding either $a_{n-1}$ or $a_n$. One may read off a lattice path from $(0,0)$ to $(n-1,r)$ that touches the line $y=x$ only at $(0,0)$ as follows. One deletes the closed parentheses and $a_n$ from $RP_T$, leaving a string consisting of $a_i$, $i \in \{1,\ldots,n-1\}$, and open parentheses. One obtains a lattice path by reading from right to left and recording an east step for each $a_i$ and a north step for each $($. (Having deleted $a_n$ means that there will only be $n-1$ east steps and that there will always have been at least one more east step than north step.)
The number of such paths is well-known (see, for example,~\cite[Exercise 6.20b]{stanleyec2}) and one has
	\begin{align*}
	  |Q_r| &=\dfrac{n-1-r}{n-1+r}\binom{n-1+r}{r} \\
		&=((n-1)-r)\dfrac{(n-2+r)!}{r!(n-1)!} \\
		&=\dfrac{(n-2+r)!}{r!(n-2)!}-\dfrac{(n-2+r)!}{(r-1)!(n-1)!} \\
		&=\binom{n+r-2}{r}-\binom{n+r-2}{r-1}.
	\end{align*}

\end{proof}

\begin{cor}
The sizes of the ranks in $\mathscr{C}_n$ weakly increase. In fact, they strictly increase until the final rank $Q_{n - 2}$, which has the same size, $C_{n-2}$, as the penultimate rank $Q_{n - 3}$.
\end{cor}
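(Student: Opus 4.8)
The plan is to work directly with the closed form from Theorem~\ref{thm: |rank|}, written in the factored shape $|Q_r| = \frac{n-r-1}{n-1}\binom{n+r-2}{r}$ that appeared in the proof, and to compare consecutive ranks by examining the ratio $|Q_{r+1}|/|Q_r|$ for $0 \le r \le n-3$ (so that the factor $n-r-1$ in the denominator is positive; recall $Q_{n-2}$ is the top rank).

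First I would compute this ratio. Using $\binom{n+r-1}{r+1} = \frac{n+r-1}{r+1}\binom{n+r-2}{r}$, one obtains
\[
\frac{|Q_{r+1}|}{|Q_r|} = \frac{n-r-2}{n-r-1}\cdot\frac{n+r-1}{r+1} = \frac{(n-r-2)(n+r-1)}{(n-r-1)(r+1)}.
\]
Next I would determine when this exceeds $1$. Clearing denominators, $|Q_{r+1}| \ge |Q_r|$ is equivalent to $(n-r-2)(n+r-1) \ge (n-r-1)(r+1)$; expanding both sides, the $-r^2$ terms cancel and the inequality collapses, after a short computation, to $(n-1)(n-3) \ge (n-1)r$, i.e.\ to $r \le n-3$, with equality precisely when $r = n-3$. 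Hence $|Q_0| < |Q_1| < \cdots < |Q_{n-3}| = |Q_{n-2}|$, which is exactly the asserted weak monotonicity, strict up through rank $n-3$, together with the equality of the last two rank sizes.

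Finally I would identify the common value $|Q_{n-3}| = |Q_{n-2}|$ with $C_{n-2}$: setting $r = n-2$ in the formula of Theorem~\ref{thm: |rank|} gives $|Q_{n-2}| = \binom{2n-4}{n-2} - \binom{2n-4}{n-3}$, and the standard Catalan identity $\binom{2m}{m} - \binom{2m}{m-1} = \frac{1}{m+1}\binom{2m}{m} = C_m$ with $m = n-2$ completes the argument. The whole proof is elementary; the only mild obstacle is the bookkeeping in the algebraic simplification of the ratio, which becomes routine once the common $r^2$ terms are cancelled and the remaining factor of $n-1$ is extracted.
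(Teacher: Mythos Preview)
Your proof is correct and follows essentially the same route as the paper: both compute the ratio $|Q_{r+1}|/|Q_r|$ from the closed form of Theorem~\ref{thm: |rank|}, reduce the inequality $(n+r-1)(n-r-2)\ge (r+1)(n-r-1)$ to $n^2-4n+3\ge r(n-1)$ (which you factor as $(n-1)(n-3)\ge r(n-1)$), and read off $r\le n-3$ with equality at $r=n-3$. You actually go slightly further than the paper by explicitly carrying out the final verification and identifying $|Q_{n-2}|$ with $C_{n-2}$ via the standard identity $\binom{2m}{m}-\binom{2m}{m-1}=C_m$.
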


\begin{proof}
From Theorem \ref{thm: |rank|}, it can be seen that $|Q_i| = \frac{(n + i - 2)!}{i! (n - 1)!} \cdot (n -  i - 1)$, and so, for consecutive ranks $r$ and $r + 1$, one has 
\begin{equation*}
\frac{|Q_{r + 1}|}{|Q_r|} = \frac{(n + r - 1)(n - 2 - r)}{(r + 1)(n - 1 - r)}.
\end{equation*}
The rank size increases weakly whenever the numerator is at least as large as the denominator, and hence the condition for weakly increasing rank size is $(n + r - 1)(n - r - 2) \geq (r + 1)(n - r - 1)$. But this condition reduces after a few simple manipulations to the condition $n^2 - 4n + 3 - r(n - 1) \geq 0$. The result can be verified easily.
\end{proof}

\section{Distances in $\mathscr{C}_n$ and $\mathscr{R}_n$}\label{sec: distances}

We now prove some properties of the comb poset relating to the distance between pairs of trees in the rotation graph $\mathscr{R}_n$.

\begin{prop} \label{prop: tamari order}
  Any ascending chain in the right comb poset $\mathscr{C}_n$ is an ascending chain in $\mathscr{T}_n$.
\end{prop}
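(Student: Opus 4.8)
The plan is to reduce the statement to covering relations. Since $\mathscr{C}_n$ is ranked (Corollary~\ref{cor: rank = PP}), every ascending chain in $\mathscr{C}_n$ refines to a saturated one, so it suffices to show that a single cover $T_1 \lessdot T_2$ in $\mathscr{C}_n$ satisfies $T_1 <_{\mathscr{T}_n} T_2$; transitivity of the Tamari order then gives the statement for an arbitrary ascending chain. By Corollary~\ref{cor: cover in Tn}, $RP_{T_2}$ is obtained from $RP_{T_1}$ by inserting one new parenthesis pair $J$, so the whole task is to identify the tree rotation this insertion performs and to check it is directed upward in $\mathscr{T}_n$.

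The first real step is to pin down where $J$ can sit inside $RP_{T_2}$. Using Proposition~\ref{prop: characterize RPTs} — every parenthesis pair embraces exactly two factors, and none embraces $a_n$ — I would show that $J$ cannot be nested inside any pair: if it were, let $K$ be the smallest pair of $RP_{T_2}$ containing $J$; then $K$ already lies in $RP_{T_1}$, the pair $J$ sits directly inside the two-factor pair $K$, and each way of placing a legal pair directly inside a two-factor pair forces $J$ or $K$ to embrace a single factor, contradicting Proposition~\ref{prop: characterize RPTs}. Hence $J$ sits at the top level of $RP_{T_2}$, and, being legal, embracing exactly two factors, and not embracing $a_n$, it must have the form $J = (E_k E_{k+1})$, where $E_1 E_2 \cdots E_m a_n$ are the top-level elements of $RP_{T_1}$ (Proposition~\ref{prop: elements = left subtrees}) and $1 \le k \le m-1$.

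It remains to read off the rotation. By Proposition~\ref{prop: elements = left subtrees}, $E_k$ and $E_{k+1}$ are the dangling left subtrees of two consecutive vertices $r_k, r_{k+1}$ on the right arm of $T_1$ (whose length is governed by Corollary~\ref{cor: c + k = n - 1}), so, rooted at $r_k$, the tree $T_1$ has the local shape $A(BC)$ with $A = E_k$, $B = E_{k+1}$, and $C$ the right subtree of $r_{k+1}$, which is the single leaf $a_n$ when $k+1 = m$. Grouping $E_k$ and $E_{k+1}$ into the one element $(E_k E_{k+1})$ replaces this local shape by $(AB)C$ and leaves the rest of the tree untouched, so $T_2$ arises from $T_1$ by a single right-arm rotation $A(BC) \to (AB)C$ (cf.\ Remark~\ref{rmk: pallodefs}). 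This is exactly an upward covering step of the Tamari order in the orientation for which $\RCT{n}$, the all-right-brackets tree, is the minimum and the left comb tree the maximum — the orientation we must be in here, since $\mathscr{C}_n$ and $\mathscr{T}_n$ share the bottom element $\RCT{n}$. Hence $T_1 <_{\mathscr{T}_n} T_2$, which completes the reduction. The passage I expect to need the most care is this last one together with the locating of $J$: getting the little case analysis that confines $J$ to the top level exactly right, and keeping the orientations of the comb cover and the Tamari cover aligned, are the two places where such an argument most easily goes astray.
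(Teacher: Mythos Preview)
Your proposal is correct and follows essentially the same route as the paper: reduce to a single cover $T_1\lessdot T_2$ in $\mathscr{C}_n$ via Corollary~\ref{cor: cover in Tn}, and then argue that adding one parenthesis pair to $RP_{T_1}$ amounts to an upward Tamari move. The paper's proof is much terser---it simply observes that adding a pair to the reduced parenthesization is the same as shifting a parenthesis pair to the left in the full parenthesization---whereas you carry out explicitly the localization of $J$ to the top level and the identification of the resulting right-arm rotation; note that this last identification is already recorded as Remark~\ref{rmk: pallodefs}, so you could shorten your argument by citing it.
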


\begin{proof}
From Corollary \ref{cor: cover in Tn}, one has that $T_2$ is a cover of $T_1$ in $\mathscr{C}_n$ if and only if $RP_{T_2}$ can be obtained from $RP_{T_1}$ by adding precisely one more parenthesis pair. Adding any parenthesis pair to $RP_T$ is the same as shifting a pair of parentheses to the \emph{left} in the corresponding full parenthesization of the leaves of $T$.
\end{proof}

\begin{prop} \label{prop: J = J'}
Suppose $T_1$ and $T_2$ are two trees having a common upper bound in $\mathscr{C}_n$. Furthermore, suppose there are pairs of parentheses $J_1$ in $RP_{T_1}$ and $J_2$ in $RP_{T_2}$ such that $J_1$ and $J_2$ enclose a common factor. Then, $J_1 = J_2$.
\end{prop}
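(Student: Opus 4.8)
The plan is to exploit the common upper bound so as to regard both $J_1$ and $J_2$ as parenthesis pairs of a single reduced parenthesization, and then to extract $J_1=J_2$ from the tree structure. Fix a tree $T$ with $T_1\le T$ and $T_2\le T$, so that $RP_{T_1}\subseteq RP_T$ and $RP_{T_2}\subseteq RP_T$; in particular $J_1,J_2\in RP_T$. The one genuinely delicate step is to check that the hypothesis — which a priori compares a factor of $J_1$ \emph{as it sits in $RP_{T_1}$} with a factor of $J_2$ \emph{as it sits in $RP_{T_2}$}, in two different parenthesizations — really produces a single object $F$ that is a factor of both $J_1$ and $J_2$ \emph{inside $RP_T$}. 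This follows from the Remark after Proposition~\ref{prop: characterize RPTs} stating that, when $RP_{T_1}\subseteq RP_T$, every parenthesis pair common to the two encloses the same pair of factors in both; applying it to $J_1$ (in $RP_{T_1}$ versus $RP_T$) and to $J_2$ (in $RP_{T_2}$ versus $RP_T$) shows the common factor $F$ is already a factor of each of $J_1$ and $J_2$ within $RP_T$.

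With that in hand I would translate ``factor'' into the language of the tree $T$. Since $J_1\in RP_T$, it does not enclose $a_n$, hence neither does any parenthesis pair nested inside it, so the part of $RP_T$ enclosed by $J_1$ is exactly the full parenthesization of the subtree of $T$ rooted at $J_1$. Therefore $F$ — whether it is a single leaf $a_i$ or a parenthesized block — is a vertex of $T$, and ``$F$ is a factor of $J_1$'' says precisely that this vertex is a child of $J_1$ in $T$; by the same token $F$ is a child of $J_2$ in $T$. Since every vertex of a tree has a unique parent, $J_1=J_2$, which is the assertion.

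A more hands-on alternative avoids the tree and uses only the fact that distinct parenthesis pairs of $RP_T$ span leaf-blocks that are either disjoint or nested, together with the fact that the two factors of a parenthesis pair split its leaf-block into an initial and a final segment. Write $[p,q]$ for the block of leaves spanned by $F$ and analyze the four cases according to whether $F$ is the left or the right factor of $J_1$ and of $J_2$. In the two ``mixed'' cases $J_1$ and $J_2$ would overlap with neither containing the other, contradicting laminarity; in the two ``aligned'' cases $J_1$ and $J_2$ must span the same leaf-block (otherwise the one spanning fewer leaves is a proper descendant of the other but meets both of that ancestor's factors, which is impossible), and since a parenthesis pair of $RP_T$ is determined by the leaves it spans, $J_1=J_2$.
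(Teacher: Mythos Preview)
Your proof is correct and follows essentially the same strategy as the paper: lift $J_1$ and $J_2$ into a single reduced parenthesization (the paper uses $T_1\vee T_2$, you use an arbitrary common upper bound $T$) and then argue that the shared factor forces $J_1=J_2$. The packaging differs slightly. The paper splits into two cases---$E$ a left factor of one and a right factor of the other (deriving a contradiction with Proposition~\ref{prop: characterize RPTs}), versus $E$ on the same side of both (forcing the other factors to coincide)---whereas your main argument handles everything at once by reading ``$F$ is a factor of $J_i$'' as ``$F$ is a child of the internal vertex $J_i$ in $T$'' and invoking uniqueness of parents. You are also more explicit than the paper about why the factor $F$ transports from $RP_{T_i}$ to $RP_T$, correctly citing the Remark after Proposition~\ref{prop: characterize RPTs}. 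Your alternative laminarity-based case analysis is essentially the paper's argument rephrased.
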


\begin{proof}
Suppose $T_1$ and $T_2$ have a common upper bound in $\mathscr{C}_n$ and suppose there are pairs of parentheses $J_1$ in $RP_{T_1}$ and $J_2$ in $RP_{T_2}$ enclosing a common factor, $P$. As $T_1$ and $T_2$ have a common upper bound, $T_1 \vee T_2$, from Corollary~\ref{cor: distributive lattice}, one has that $P, J_1, J_2 \in RP_{T_1 \vee T_2}$.  From Proposition~\ref{prop: characterize RPTs}, one has without loss of generality that $J_2 \subset J_1$, with $J_1=P\sqcup P_1$ and $J_2=P\sqcup P_2$. One then has that $P_1=P_2 \sqcup P_3$, for some $P_3$. Both $J_2$ and $P_1$ are in $RP_{T_1 \vee T_2}$ and have nontrivial intersection, yet neither contains the other, contradicting Proposition~\ref{prop: characterize RPTs}.
\end{proof}

\begin{lemma}\label{lem: minimal paths size of symmetric difference}
  Suppose $T_1$ and $T_2$ are two trees with a common upper bound in $\mathscr{C}_n$ and suppose $(\lambda_1, \lambda_2, \ldots, \lambda_\ell)$ is a path from $T_1$ to $T_2$ in $\mathscr{R}_n$, with $\lambda_i$ being the rotation between trees $S_i$ and $S_{i+1}$, with $S_1=T_1$ and $\lambda_\ell$ the rotation from $S_\ell$ to $T_2$. Let $RP_{T_1} \bigtriangleup RP_{T_2}$ denote the symmetric difference of $RP_{T_1}$ and $RP_{T_2}$. Let $f \colon RP_{T_1} \bigtriangleup RP_{T_2} \to \{\lambda_1,\ldots,\lambda_s\}$ be the map defined by $f(J)=\lambda_j$, where $j$ is the minimum index such that $J \in RP_{S_i} \smallsetminus RP_{S_{i+1}}$ or $J \in RP_{S_{i+1}} \smallsetminus RP_{S_i}$. Then $f$ is injective and the shortest possible length of a path from $T_1$ to $T_2$ along the edges of the rotation graph $\mathscr{R}_n$ is $|RP_{T_1}\bigtriangleup RP_{T_2}|$.
\end{lemma}

\begin{proof}
  From Corollary \ref{cor: distributive lattice} one has that $RP_{T_1 \wedge T_2}$ contains all the common parenthesis pairs of $RP_{T_1}$ and $RP_{T_2}$. Hence, $RP_{T_1}$ and $RP_{T_2}$ are formed by adding, respectively, some $r$ and $s$ \emph{extra} pairs of parentheses to $RP_{T_1 \wedge T_2}$, from Corollary \ref{cor: cover in Tn}, where $r$ and $s$ are nonnegative integers, and $|RP_{T_1}\bigtriangleup RP_{T_2}|=r+s$.

  Suppose $f(J)=f(K) =\lambda_j$. If $J \ne K$, then $\lambda_j$ is a rotation sending $J$ to $K$ or vice versa. Without loss of generality, assume $\lambda_j$ is a rotation sending $J$ to $K$. Since $J,K \in RP_{T_1} \bigtriangleup RP_{T_2}$, both are in $RP_{T_1 \vee T_2}$. However, since $\lambda_j$ is a rotation sending $J$ to $K$, one must have that $J$ and $K$ share a factor. But then Proposition~\ref{prop: J = J'} forces $J = K$, a contradiction. Thus $f$ must be injective, so the minimum length of a path from $T_1$ to $T_2$ in $\mathscr{R}_n$ is $|RP_{T_1} \bigtriangleup RP_{T_2}|$. 

\end{proof}

\begin{thm} \label{thm: T1 + T2 - 2TR}
If $T_1$ and $T_2$ are two trees in some interval in $\mathscr{C}_n$, then the shortest distance between them \emph{along the edges of the rotation graph $\mathscr{R}_n$} is given by \[
d_{\mathscr{R}_n}(T_1, T_2) = \text{rank}(T_1) + \text{rank}(T_2) - 2 \cdot \text{rank}(T_1 \wedge T_2).
\]
Equivalently, from Corollary \ref{cor: r(T1) + r(T2)}, this shortest distance is also given by $$d_{\mathscr{R}_n}(T_1, T_2) = 2 \cdot \text{rank}(T_1 \vee T_2) - \text{rank}(T_1) - \text{rank}(T_2).$$
\end{thm}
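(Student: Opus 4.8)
The plan is to establish the distance formula in two directions: an upper bound coming from an explicit path in $\mathscr{R}_n$ through the meet, and a matching lower bound coming from an invariant that decreases by at most one along each rotation. For the upper bound, recall from Proposition \ref{prop: tamari order} that every covering relation in $\mathscr{C}_n$ is an edge of $\mathscr{R}_n$ (a right-arm rotation), so a saturated chain from $T_1 \wedge T_2$ up to $T_1$ has length $\text{rank}(T_1)-\text{rank}(T_1\wedge T_2)$ and likewise for $T_2$. Concatenating these two chains (one descending from $T_1$ to $T_1\wedge T_2$, one ascending from $T_1\wedge T_2$ to $T_2$) gives a walk in $\mathscr{R}_n$ of length $\text{rank}(T_1)+\text{rank}(T_2)-2\,\text{rank}(T_1\wedge T_2)$, so $d_{\mathscr{R}_n}(T_1,T_2)$ is at most this quantity. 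The second displayed form then follows immediately from Corollary \ref{cor: r(T1) + r(T2)}.

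For the lower bound I would find a function $\Phi$ on $\mathbb{T}_n$ (depending on the fixed pair $T_1,T_2$, or on the interval containing them) that equals $0$ at both $T_1$ and $T_2$ but whose value is comparable to $\text{rank}(T_1)+\text{rank}(T_2)-2\,\text{rank}(T\wedge(\cdots))$ and which changes by at most $1$ across any single rotation. The natural candidate, exploiting Corollary \ref{cor: distributive lattice}, is to count parenthesis pairs: for a tree $S$ lying in the interval, $\text{rank}(S)$ is the number of parenthesis pairs in $RP_S$, and for $S$ between $T_1\wedge T_2$ and $T_1\vee T_2$ one can track how many of the ``target'' parenthesis pairs of $RP_{T_1}$ and of $RP_{T_2}$ are present in $RP_S$. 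Set, say, $\Phi(S) = |RP_{T_1}\setminus RP_S| + |RP_{T_2}\setminus RP_S| $ when $RP_S$ consists only of pairs lying in $RP_{T_1}\cup RP_{T_2} = RP_{T_1\vee T_2}$; this is $0$ exactly at $T_1$ and $T_2$ (using Proposition \ref{prop: J = J'} and the remarks on common factors to see that no ``spurious'' pairs can appear along an optimal path), and one checks that a single rotation changes each of the two terms by at most $1$. The subtle point is that a geodesic in $\mathscr{R}_n$ between $T_1$ and $T_2$ need not stay inside the interval $[T_1\wedge T_2, T_1\vee T_2]$, so the argument must either (a) show that any geodesic can be pushed into this interval without increasing its length, using that the interval is a distributive lattice and that excursions outside it can be short-circuited, or (b) replace $\Phi$ by a genuinely global function on all of $\mathbb{T}_n$ — for instance built from bracketing vectors $\langle T\rangle$ — that still has the right values at $T_1, T_2$ and is $1$-Lipschitz for rotations.

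The main obstacle is exactly this confinement issue: controlling geodesics that wander outside $[T_1\wedge T_2,\, T_1\vee T_2]$. I expect the cleanest route is to prove that for any tree $S$ and any rotation $S\to S'$, the quantity $\text{rank}(S) - 2\,\text{rank}(S\wedge(T_1\wedge T_2)) + (\text{something symmetric in }T_2)$ is $1$-Lipschitz, by analyzing how a single rotation (either a right-arm rotation, which changes $RP$ by one pair, or a non-right-arm rotation, whose effect on the reduced parenthesization must be understood case by case) interacts with intersection of parenthesis-pair sets. Establishing that non-right-arm rotations cannot help — i.e., cannot decrease the relevant potential by more than a right-arm rotation does — is where the real work lies; the monotone, in-interval part of the argument is the routine bookkeeping with $J(P_T)$ and Corollary \ref{cor: distributive lattice} already set up in the previous section.
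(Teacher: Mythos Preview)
Your upper bound is exactly the paper's, and you have correctly located the real difficulty: geodesics in $\mathscr{R}_n$ need not stay in $[T_1\wedge T_2,\,T_1\vee T_2]$, and a non-right-arm rotation replaces one parenthesis pair by another, so a naive ``symmetric-difference is $1$-Lipschitz'' argument fails (it is only $2$-Lipschitz). However, your proposed potential does not work as written: $\Phi(S)=|RP_{T_1}\setminus RP_S|+|RP_{T_2}\setminus RP_S|$ has $\Phi(T_1)=|RP_{T_2}\setminus RP_{T_1}|\ne 0$ in general, so it is not zero at the endpoints, and you do not indicate how to repair it. More importantly, your plan ends precisely where the content is: you say the ``real work'' is the analysis of non-right-arm rotations, but you give no mechanism for it. As it stands, the lower bound is a gap, not a proof.

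The paper closes this gap with a short device you did not hit on, and it uses Proposition~\ref{prop: J = J'} in a way you only gestured at. Instead of a potential, the paper builds an injection $f\colon RP_{T_1}\bigtriangleup RP_{T_2}\to\{\lambda_1,\dots,\lambda_\ell\}$ for an arbitrary path $T_1=S_1,\dots,S_{\ell+1}=T_2$ in $\mathscr{R}_n$, by sending a pair $J$ to the first rotation $\lambda_j$ that changes its presence. If $f(J)=f(K)=\lambda_j$ with $J\ne K$, then $\lambda_j$ is a single rotation, hence removes one pair and inserts the other; these two pairs necessarily share a common factor (the middle block $B$ of the rotation). By the first-impact property, the removed pair was present since $S_1=T_1$ and the inserted pair lies in $RP_{T_2}$, so $J\in RP_{T_1}$, $K\in RP_{T_2}$, and they share a factor. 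Now Proposition~\ref{prop: J = J'}, applied to $T_1$ and $T_2$ (which have a common upper bound), forces $J=K$, a contradiction. Thus $f$ is injective and $\ell\ge |RP_{T_1}\bigtriangleup RP_{T_2}|=\operatorname{rank}(T_1)+\operatorname{rank}(T_2)-2\operatorname{rank}(T_1\wedge T_2)$. This is exactly the missing idea: a non-right-arm rotation can touch two reduced parenthesis pairs, but never two pairs from $RP_{T_1}\bigtriangleup RP_{T_2}$, because those two would have to coincide by Proposition~\ref{prop: J = J'}. No confinement of geodesics and no global Lipschitz function is needed.
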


\begin{proof}
  If $T_1$ and $T_2$ are two trees in some interval in $\mathscr{C}_n$, $|RP_{T_1}\bigtriangleup RP_{T_2}|=\text{rank}(T_1) + \text{rank}(T_2) - 2 \cdot \text{rank}(T_1 \wedge T_2)$. From Lemma~\ref{lem: minimal paths size of symmetric difference}, one knows that the minimal possible length of a path from $T_1$ to $T_2$ is $|RP_{T_1}\bigtriangleup RP_{T_2}|$. Furthermore, one knows a path of this length exists -- the path in $\mathscr{C}_n$ from $T_1$ to $T_1 \wedge T_2$ obtained by deleting the pairs of parentheses in $RP_{T_1}$ that do not appear in $RP_{T_2}$, followed by the path from $T_1 \wedge T_2$ to $T_2$ obtained by adding the pairs of parentheses in $RP_{T_2}$ not appearing in $RP_{T_1}$.
\end{proof}

\begin{thm}\label{thm: shortest paths}
  For $T_1,T_2$ with an upper bound in $\mathscr{C}_n$, any shortest path in $\mathscr{R}_n$ from $T_1$ to $T_2$ also lies in $\mathscr{C}_n$.
\end{thm}

\begin{proof}
  Suppose $T_1$ and $T_2$ lie in some interval of $\mathscr{C}_n$ and $(\lambda_1,\ldots,\lambda_{r+s})$ is a shortest path from $T_1$ to $T_2$ in $\mathscr{R}_n$, with $\lambda_i$ being a rotation between trees $S_i$ and $S_{i+1}$. Suppose $\lambda_i$ is a rotation not centered on the right arm of $S_i$, i.e.\ it ``shifts'' a pair of parentheses, so $|RP_{S_i}|=|RP_{S_{i+1}}|$, and assume that $\lambda_i$ is the first such rotation. In precise terms, this means there are two pairs of parentheses $J, J'$ with $\{J\} = RP_{S_i}\smallsetminus RP_{S_{i+1}}$ and $\{J'\} = RP_{S_{i+1}}\smallsetminus RP_{S_i}$. Since $(\lambda_1,\ldots,\lambda_{r+s})$ is a path of shortest possible length, the map $f$ in Lemma~\ref{lem: minimal paths size of symmetric difference} is a bijection and one must have $J \in RP_{T_1}\smallsetminus RP_{T_2}$ or $J' \in RP_{T_2}\smallsetminus RP_{T_1}$, but not both. 
  
  Suppose $J' \in RP_{T_2}\smallsetminus RP_{T_1}$. Then there must be some rotation $\lambda_j$ with $j < i$ and $\{J\}= RP_{S_{j+1}}\smallsetminus RP_{S_j}$. However, since $J \notin RP_{T_1}$, since $f$ is a bijection, one must have that $\lambda_j$ transformed some $J'' \in RP_{T_1}\cap RP_{S_j}$ into $J$, i.e.\ $\lambda_j$ cannot be centered on the right arm of $RP_{S_j}$, contradicting that $\lambda_i$ was the first such rotation.
  
  Consequently, one must have $J \in RP_{T_1}\smallsetminus RP_{T_2}$. Without loss of generality, one may assume that $\lambda_i$ shifts $J$ to the right, i.e.\ $J=A \sqcup B$ for factor $A,B \in RP_{S_i}$ and $J'= B\sqcup C$ for factors $B,C \in RP_{S_{i+1}}$. Moreover, for the rotation $\lambda_i$ to take place, one must have that $A,B,C \in RP_{S_i} \cap RP_{S_{i+1}}$ and that there is some $K = A \sqcup B \sqcup C \in RP_{S_i} \cap RP_{S_{i+1}}$. Since $J \in RP_{T_1}$, one has that $J \in RP_{T_1\vee T_2}$, so since $J \cap J' =B$, one must have that $J' \notin RP_{T_1 \vee T_2}$. Then there is a $j>i$ such that $\lambda_j$ transforms $J'$ to some $J'' \in RP_{T_1 \vee T_2}$. Moreover, since $f$ is a bijection, one must have that $J \ne J''$.
  
  Suppose $J$ encloses the leaves $\{a_{m+1},\ldots,a_q\}$, $J'$ encloses the leaves $\{a_{p+1},\ldots,a_t\}$ and $K$ encloses $\{a_{m+1},\ldots,a_t\}$ for $0 \leq m<p<q<t<n$. $J''$ is obtained from $J'$ by a rotation, so $J''$ must enclose either $a_{p+1}$ or $a_t$. However, $a_{p+1} \in J$ and, since $J, J'' \in RP_{T_1\vee T_2}$, $J \cap J'' = \emptyset$. Consequently, $a_t \in J''$. However, since $a_t$ was the last leaf enclosed by $J'$, it cannot be the last leaf enclosed by $J''$. Recall that $a_t$ is the last leaf enclosed by $K$, so $J'' \not\subset K$. Since $K \cap J'' \ne \emptyset$ and $J''\not \subset K$, one must have that $K \notin RP_{T_1\vee T_2}$. Thus, $K$ is in neither $RP_{T_1}$ nor $RP_{T_2}$, so it must result from a rotation $\lambda_k$ with $RP_{S_k} \bigtriangleup RP_{S_{k+1}}=\{K,K'\}$, i.e.\ with $\lambda_k$ not centered on the right arm of $S_k$. But since $K \in RP_{S_i}$, $k<i$, a contradiction.

  Thus, all rotations $\lambda_j$ must be centered on the right arm of $S_j$, i.e.\ the path $(\lambda_1,\ldots,\lambda_{r+s})$ lies entirely in $\mathscr{C}_n$.

\end{proof}


\begin{cor} \label{cor: rank = distance}
The rank of any tree $T \in \mathbb{T}_n$ in $\mathscr{C}_n$ is its distance from the right comb tree along the edges of the rotation graph $\mathscr{R}_n$. Furthermore, from Corollary \ref{cor: rank = PP}, the distance of $T$ from the right comb tree in $\mathscr{R}_n$ is given by the number of parenthesis pairs in $RP_T$.
\end{cor}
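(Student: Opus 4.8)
The plan is to derive this as an immediate specialization of Theorem~\ref{thm: T1 + T2 - 2TR}. First I would observe that the hypothesis of that theorem—that the two trees lie in a common interval of $\mathscr{C}_n$—is automatically satisfied when one of them is $\RCT{n}$: since $\RCT{n}$ corresponds to the empty reduced parenthesization $a_1a_2\cdots a_n$, it is the unique minimal element of $\mathscr{C}_n$, so for any $T \in \mathbb{T}_n$ the pair $T$ and $\RCT{n}$ both lie in the interval $[\RCT{n}, T]_{\mathscr{C}_n}$.

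Next I would compute the meet. Because $\RCT{n} \leq T$, we have $T \wedge \RCT{n} = \RCT{n}$, and by Corollary~\ref{cor: rank = PP} (the rank is the number of parenthesis pairs in the reduced parenthesization), $\text{rank}(\RCT{n}) = 0$. Substituting $T_1 = T$ and $T_2 = \RCT{n}$ into the formula of Theorem~\ref{thm: T1 + T2 - 2TR} gives
\[
d_{\mathscr{R}_n}(T, \RCT{n}) = \text{rank}(T) + \text{rank}(\RCT{n}) - 2\cdot\text{rank}(T \wedge \RCT{n}) = \text{rank}(T).
\]
Finally, for the second assertion I would again invoke Corollary~\ref{cor: rank = PP} to rewrite $\text{rank}(T)$ as the number of parenthesis pairs in $RP_T$.

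There is no real obstacle here: the content of the statement is entirely contained in Theorem~\ref{thm: T1 + T2 - 2TR}, and the only thing requiring a word of justification is that its hypothesis holds, which follows from $\RCT{n}$ being the bottom element of $\mathscr{C}_n$. Thus the proof is essentially a one-line deduction.
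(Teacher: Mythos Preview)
Your proposal is correct and is exactly the intended derivation: the paper states this corollary immediately after Theorem~\ref{thm: T1 + T2 - 2TR} without separate proof, so the one-line specialization $T_1=T$, $T_2=\RCT{n}$ (using that $\RCT{n}$ is the bottom element, hence $T\wedge\RCT{n}=\RCT{n}$ has rank~$0$) is precisely what is meant.
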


\begin{remark}
  It can be easily shown from the result above that the \textbf{diameter} of the rotation graph $\mathscr{R}_n$, given by the maximum distance between any pair of trees in $\mathscr{R}_n$, is at most $2n - 4$ for any $n \in \mathbb{N}$. In~\cite{stt}, Sleator, Tarjan and Thurston established the tighter bound of $2n-6$ on the diameter of the rotation graph for $n \geq 11$.
\end{remark}

\section{Tamari Meets and Joins for Two Trees in Some Interval} \label{Section 5}\label{sec:meetjoin}\label{sec: meetjoin}

From Corollary \ref{cor: distributive lattice}, we know the meaning of the meet and join of a pair of trees having a common upper bound in our poset. It is natural to ask how these meets and joins relate to meets and joins in the Tamari lattice. As before, we will refers to meets and joins in the Tamari lattice $\mathscr{T}_n$ as the ``Tamari meet'' and ``Tamari join''. 

The first observation is that, while two arbitrary trees in $\mathscr{C}_n$ \emph{do} have a well-defined meet in $\mathscr{C}_n$, this meet does not necessarily correspond to the Tamari meet. For example, consider the pair of trees represented by $T_1 = (((a_1a_2)a_3)a_4)a_5$ and $T_2 = ((a_1(a_2a_3))a_4)a_5$. This pair has Tamari meet $T_2$, while their meet in $\mathscr{C}_n$ is just the right comb tree. Further, recall that $\mathscr{C}_n$ is a meet-semilattice rather than a lattice, so not all pairs of trees have a join.

However, something much stronger can be said if both the trees under consideration are in some interval in the comb poset; it turns out that their meet and join in $\mathscr{C}_n$ correspond to their Tamari meet and join.

In~\cite{ht:bracketingvectors}, Huang and Tamari consider the lattice dual to $\mathscr{T}_n$ and characterize the meet in that lattice as the componentwise minimum of the \emph{bracketing vectors}. In~\cite{pallovectors}, Pallo obtains an analogous result for $\mathscr{T}_n$ in terms of \emph{weight vectors}, which will be of use here. 

\begin{defn}\label{def:weightvector}
	Suppose $T \in \mathbb{T}_n$. For each $i \in \{1,\ldots,n-1\}$, let $w_T(i)=\max_{E\in RP_T \colon i =\max E} |E|$. The \emph{weight vector} of $T$ is $\langle T \rangle =\langle w_T(i)\rangle$.
\end{defn}

\begin{example}
Consider the tree $T$ having reduced parenthesization $( (a_1a_2)(a_3a_4))a_5(a_6(a_7a_8))a_9$. For illustrative purposes, enclose each $a_i$ in a pair of parentheses to represent the singleton sets in $RP_T$, giving 
\[
( ( (a_1)(a_2))( (a_3)(a_4)))(a_5)( (a_6)((a_7)(a_8)))(a_9).
\]
Then $\langle T \rangle =(1,2,1,4,1,1,1,3)$.
\end{example}

\begin{thm}[{Pallo, \cite[Theorem 2]{pallovectors}}] \label{thm:pallo bracketing}
For two $n$-leaf binary trees $T$ and $T'$, one has $T\leq T'$ if and only if the weight vector of $T$ is component-wise less than or equal to the weight vector of $T'$. Furthermore, the bracketing vector for the meet of two trees in the Tamari lattice corresponds to the componentwise minimum of the weight vectors of the two trees.
\end{thm}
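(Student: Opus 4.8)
The plan is to reduce the ``if and only if'' to the analysis of a single Tamari cover together with an injectivity statement, and then to obtain the meet formula as a formal consequence, modulo one self-contained combinatorial lemma about which integer vectors arise as bracketing vectors.

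First I would restate Definition~\ref{defn: bracketing vectors} in the form given by Proposition~\ref{prop: left child in pruned tree}: for $1 \le i \le n-1$, the number $b_i(T)+1$ is the number of leaves of the largest subtree of $T$ whose rightmost leaf is $a_i$; call this subtree $T_{(i)}$, so its leaf set is $\{a_{i-b_i(T)},\dots,a_i\}$. Crucially, $b_i(T)$ depends only on the \emph{leaf set} of $T_{(i)}$. With this in hand I would compute the effect of a Tamari cover. A cover $S \lessdot S'$ in $\mathscr{T}_n$ is a rotation turning a subtree $(A(BC))$ into $((AB)C)$ at some internal node $v$; writing the leaves of this subtree as $a_p,\dots,a_q$ with $A$ on $a_p,\dots,a_r$, $B$ on $a_{r+1},\dots,a_s$, $C$ on $a_{s+1},\dots,a_q$, one uses the fact that any two subtrees are nested or disjoint to see that the largest subtree ending at $a_i$ has the same leaf set in $S$ and $S'$ for every $i \ne s$ (for $i$ outside $[p,q]$ the relevant subtree either avoids $[p,q]$ or contains all of it; for $i\in[p,q]$ with $i\ne s$ it lies inside $A$, $B$, $C$ or contains all of $(A(BC))=((AB)C)$), whereas for $i=s$ it jumps from $B$ to $(AB)$, so $b_s$ increases by $|A|=r-p+1>0$. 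Thus \emph{every Tamari cover increases exactly one coordinate of the bracketing vector and fixes the rest}, and chaining along a saturated chain gives the forward direction: $T \le T'$ in $\mathscr{T}_n$ implies $\langle T\rangle \le \langle T'\rangle$ componentwise.

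Next I would show $T \mapsto \langle T\rangle$ is injective on $\mathbb{T}_n$: the intervals $[i-b_i(T),i]$ recover the leaf sets of the $T_{(i)}$, the inclusion-maximal ones are the dangling left subtrees along the right arm of $T$ (Proposition~\ref{prop: elements = left subtrees}), hence recover the elements of $RP_T$, and one recurses inside each element; so $\langle T\rangle$ determines $RP_T$ and therefore $T$. It remains to prove the converse direction of the equivalence: $\langle T\rangle \le \langle T'\rangle$ implies $T \le T'$. I would argue by induction on $\sum_i\bigl(b_i(T')-b_i(T)\bigr)$, the base case $\langle T\rangle = \langle T'\rangle$ being handled by injectivity. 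For the inductive step, assuming $\langle T\rangle \lneq \langle T'\rangle$, I want to produce a Tamari cover $T \lessdot T''$ with $\langle T''\rangle \le \langle T'\rangle$; then $T \lessdot T'' \le T'$ by induction and we are done. To find such a cover one looks at a coordinate $j$ with $b_j(T)<b_j(T')$, examines how $T_{(j)}$ sits inside $T$ (it is necessarily a left child of its parent, since being a right child would contradict maximality), and performs a right rotation nearby that raises $b_j$ toward but not past $b_j(T')$. This last point is the step I expect to be the main obstacle: one must exploit that $\langle T'\rangle$ is an \emph{honest} bracketing vector to guarantee both that a usable rotation site exists (the parent of $T_{(j)}$ is itself a right child, or a suitable preparatory rotation is available) and that one can choose it so the jump $|A|$ does not overshoot $b_j(T')$. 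I would therefore first isolate a characterization of bracketing vectors as a lemma --- concretely $b_1=0$, $b_i\ge 0$, and $b_k + (i-k) \le b_i$ whenever $i-b_i \le k < i$ --- proved once from the subtree description above, and then use its nesting/laminarity content to drive the choice of cover.

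Finally, the ``furthermore'' clause follows formally once the equivalence is established, modulo one closure statement: the componentwise minimum $m$ of two bracketing vectors is again a bracketing vector, which I would verify directly from the lemma's inequalities (checking as well that $m$ still corresponds to an actual tree --- a secondary point requiring care). Granting this, write $m = \langle S\rangle$; since $m \le \langle T\rangle$ and $m \le \langle T'\rangle$, the converse direction gives $S \le T$ and $S \le T'$, while for any common lower bound $U$ one has $\langle U\rangle \le \langle T\rangle$ and $\langle U\rangle \le \langle T'\rangle$, hence $\langle U\rangle \le m = \langle S\rangle$, hence $U \le S$. Therefore $S = T \wedge_{\mathscr{T}_n} T'$ and $\langle T \wedge_{\mathscr{T}_n} T'\rangle = m = \min\bigl(\langle T\rangle,\langle T'\rangle\bigr)$, as claimed.
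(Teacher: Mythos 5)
The paper offers no proof of this statement to compare against: it is quoted from Pallo and the reader is referred to \cite[Theorem 2]{pallovectors}. Judged on its own terms, your outline is correct where it is carried out. The cover computation is right: a Tamari cover $(A(BC))\to((AB)C)$ raises exactly the coordinate $b_s$ indexed by the last leaf of $B$, by $|A|$, and fixes every other coordinate, because the leaf sets of subtrees form a laminar family of intervals. Your injectivity argument also works, since the $n-1$ left-child subtrees of $T$ are precisely the maximal subtrees with prescribed rightmost leaf $a_i$ for $i\le n-1$, so the intervals $[i-b_i,i]$ determine $T$.

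The one genuine gap is the step you flag yourself: in the converse direction you must actually exhibit, from $\langle T\rangle\lneq\langle T'\rangle$, a cover $T\lessdot T''$ with $\langle T''\rangle\le\langle T'\rangle$, and you only describe the shape of such an argument. It does go through, and the missing content is roughly this. Pick $j$ with $b_j(T)<b_j(T')$; since $b_j(T')\le j-1$ forces $b_j(T)\le j-2$, the subtree $T_{(j)}$ is not the left child of the root. If its parent is a right child, the rotation at the grandparent is a Tamari cover adding $b_k(T)+1$ to $b_j$, where $k=j-b_j(T)-1$; since $k\in[\,j-b_j(T'),\,j)$, laminarity of $\langle T'\rangle$ gives $b_k(T')\le b_j(T')-b_j(T)-1$, and $b_k(T)\le b_k(T')$ then rules out overshooting. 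If instead the parent is a left child, that parent equals $T_{(m)}$ for some $m>j$ with $b_m(T)=b_j(T)+(m-j)$, and laminarity of $\langle T'\rangle$ forces $b_m(T)<b_m(T')$, so you may replace $j$ by $m$ and climb; indices strictly increase, so the process terminates at a coordinate where the rotation exists. Two smaller corrections: your proposed characterization of bracketing vectors must also include $b_i\le i-1$ (otherwise $(0,2)$ would qualify for $n=3$), and the closure under componentwise minimum that you defer is then a one-line consequence of the same laminarity inequality, so the ``furthermore'' clause follows as you describe.
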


\begin{thm} \label{thm: bracketing vectors}
Let $\langle T \rangle$ denote the weight vector of $T \in \bT_n$. Let $T_1$ and $T_2$ be arbitrary trees in the same interval of $\mathscr{C}_n$. Then, their meet and join in $\mathscr{C}_n$ are given by the trees corresponding respectively to the componentwise minimum and the componentwise maximum of $\langle T_1 \rangle$ and $\langle T_2 \rangle$.
\end{thm}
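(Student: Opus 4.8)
The plan is to leverage the identification from Proposition~\ref{prop: JPT interval}: if $T_1, T_2$ lie in the interval $[\RCT{n}, T]_{\mathscr{C}_n}$, then both are recovered from $T$ by deleting parenthesis pairs, and by Corollary~\ref{cor: distributive lattice} the meet and join correspond to intersection and union of the parenthesis-pair sets $RP_{T_1}, RP_{T_2} \subseteq RP_T$. So it suffices to show that the map $S \mapsto \langle S \rangle$, restricted to the interval $[\RCT{n}, T]$, carries intersection of parenthesis-pair sets to componentwise minimum of bracketing vectors and union to componentwise maximum. Equivalently, I would show that within a fixed interval this map is a lattice embedding of the distributive lattice $J(P_T)$ into $(\{0,1,\dots,n-2\}^{n-1}, \wedge_{\min}, \vee_{\max})$.

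First I would handle the meet. Since $T_1 \wedge T_2 \leq T_1$ and $T_1 \wedge T_2 \leq T_2$ in $\mathscr{C}_n$, Proposition~\ref{prop: tamari order} gives that these are also relations in the Tamari order, so by Pallo's Theorem~\ref{thm:pallo bracketing} the bracketing vector $\langle T_1 \wedge T_2\rangle$ is componentwise $\leq$ both $\langle T_1\rangle$ and $\langle T_2 \rangle$, hence $\leq$ their componentwise minimum. For the reverse inequality I would use the explicit description of $b_i$ via Proposition~\ref{prop: left child in pruned tree}: $b_i(S) = k_i(S) - 1$, where $k_i(S)$ counts the leaves enclosed by the largest parenthesis pair of (the leaf-padded) $RP_S$ having $a_i$ as its largest enclosed leaf. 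The key combinatorial point is that this ``largest enclosing block with top leaf $a_i$'' is determined by a nested chain of parenthesis pairs in $RP_S$, all of which lie in $RP_T$; deleting parentheses from $RP_T$ can only break this chain, never lengthen it. So if a parenthesis pair $J$ is the one realizing $k_i$ for $T_1 \wedge T_2$, then $J \in RP_{T_1} \cap RP_{T_2}$, and $J$ (together with the nested pairs inside it witnessing its leaf count, which are also common) shows $k_i(T_1 \wedge T_2) \geq \min(k_i(T_1), k_i(T_2))$. Thus $\langle T_1 \wedge T_2 \rangle$ equals the componentwise minimum.

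For the join, one route is to invoke Corollary~\ref{cor: r(T1) + r(T2)}, which gives $\mathrm{rank}(T_1) + \mathrm{rank}(T_2) = \mathrm{rank}(T_1 \wedge T_2) + \mathrm{rank}(T_1 \vee T_2)$, together with the fact (from Proposition~\ref{prop: left child in pruned tree} and Corollary~\ref{cor: rank = PP}) that $\mathrm{rank}(S) = \sum_i b_i(S)$, since the rank is the number of parenthesis pairs in $RP_S$ and each pair enclosing more than one leaf contributes to exactly one coordinate's excess over the minimal configuration. Combined with the meet result and the elementary identity $\min(a,b) + \max(a,b) = a+b$ applied coordinatewise, this forces $\sum_i b_i(T_1 \vee T_2) = \sum_i \max(b_i(T_1), b_i(T_2))$. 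To upgrade this equality of sums to the coordinatewise statement, I would note that $T_1 \vee T_2 \geq T_1, T_2$ in $\mathscr{C}_n$, hence (again by Proposition~\ref{prop: tamari order} and Theorem~\ref{thm:pallo bracketing}) $\langle T_1 \vee T_2 \rangle \geq \max(\langle T_1 \rangle, \langle T_2\rangle)$ componentwise; a componentwise $\geq$ with equal coordinate sums must be an equality. Alternatively, one can argue directly with parenthesis-pair unions as in the meet case.

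The main obstacle I expect is the meet's reverse inequality, i.e.\ verifying that the parenthesis pair and its nested witnesses realizing $k_i(T_\ell)$ for the smaller-$k_i$ index $\ell$ genuinely survive intersection and still realize the same $k_i$ value in $T_1 \wedge T_2$. This requires a careful look at the structure of ``maximal block with top leaf $a_i$'': one must check that the relevant pair in $RP_{T_\ell}$ is not merely present but still \emph{maximal} among pairs with top leaf $a_i$ once we pass to the sub-parenthesization $RP_{T_1} \cap RP_{T_2}$ — and here the hypothesis that $T_1, T_2$ share an upper bound is essential, since it is exactly what Proposition~\ref{prop: J = J'} uses to guarantee that matching factors force matching parenthesis pairs, preventing the kind of incompatibility that occurs for the non-interval example $(((a_1a_2)a_3)a_4)a_5$ and $((a_1(a_2a_3))a_4)a_5$ discussed above. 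Once this structural lemma is pinned down, the rest is bookkeeping.
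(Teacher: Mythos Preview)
Your proposal has two genuine errors, and the paper's route avoids both.

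First, the identity $\mathrm{rank}(S)=\sum_i b_i(S)$ that you invoke for the join is false. Take the left comb tree in $\mathbb{T}_4$: its bracketing vector is $(0,1,2)$ with sum $3$, but its rank in $\mathscr{C}_4$ is $2$. More generally the paper's own example $((a_1a_2)(a_3a_4))a_5(a_6(a_7a_8))a_9$ has bracketing vector summing to $6$ but only $5$ parenthesis pairs. So your ``route 1'' for the join collapses, and you are forced onto the direct parenthesis-union argument you mention as an alternative.

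Second, the displayed sentence for the meet reverse inequality proves the wrong direction. You take $J$ to be the pair realizing $k_i(T_1\wedge T_2)$, observe $J\in RP_{T_1}\cap RP_{T_2}$, and conclude $k_i(T_1\wedge T_2)\geq\min(k_i(T_1),k_i(T_2))$. But $J\in RP_{T_\ell}$ only tells you $k_i(T_\ell)\geq |J|=k_i(T_1\wedge T_2)$, which is exactly the inequality you already extracted from Pallo's theorem. To get $k_i(T_1\wedge T_2)\geq\min$, you must start (as you correctly say in your final paragraph) from the pair $J_1$ realizing $k_i(T_1)$, assume $k_i(T_1)\leq k_i(T_2)$, and show $J_1\in RP_{T_2}$. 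The missing lemma is that once a parenthesis pair $J_2$ lies in $RP_{T_2}$, its internal structure is completely rigid: since a pair on $m$ leaves already contains $m-1$ nested pairs, no further pairs can be inserted inside $J_2$ when passing to $RP_{T_1\vee T_2}\supseteq RP_{T_2}$. Hence the pairs nested in $J_2$ are the same in $RP_{T_2}$ and in $RP_{T_1\vee T_2}$; since $J_1$ has leaf range contained in $J_2$'s and $J_1\in RP_{T_1}\subseteq RP_{T_1\vee T_2}$, one gets $J_1\in RP_{T_2}$.

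The paper's proof is simply the direct argument you list as your alternative: it treats the join first, noting that the maximal pair with top leaf $a_i$ in $RP_{T_1\vee T_2}=RP_{T_1}\cup RP_{T_2}$ must lie in one of the two, is still maximal there, and dominates the other --- a two-line application of Proposition~\ref{prop: left child in pruned tree} and Corollary~\ref{cor: distributive lattice} with no appeal to Pallo's theorem or to rank identities. The meet is then declared ``analogous'' (and does implicitly need the rigidity observation above). Your detour through Tamari monotonicity for one inequality and a rank-sum trick for the other is more machinery for less payoff; once the errors are fixed you converge on the paper's argument anyway.
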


\begin{proof}
First, consider $\langle T_1 \vee T_2\rangle$. Suppose the $i$th coordinate is $k$. Then, by definition, $k=\max_{E \in RP_{T_1\vee T_2} \colon i \in E} |E|$. From Corollary~\ref{cor: distributive lattice}, one has that $RP_{T_1 \vee T_2}=RP_{T_1}\cup RP_{T_2}$, so one must have that $k=\max(w_{T_1}(i),w_{T_2}(i))$. In other words, $\langle T_1 \join T_2 \rangle$ is the componentwise maximum of $\langle T_1 \rangle$ and $\langle T_2 \rangle$. The proof for $T_1 \meet T_2$ is analogous.
\end{proof}

\begin{cor} \label{cor: tamari meet = Tn meet}
For $T_1$ and $T_2$ in some interval in $\mathscr{C}_n$, their meet and join in $\mathscr{C}_n$ correspond respectively to their meet and join in the Tamari lattice $\mathscr{T}_n$.
\end{cor}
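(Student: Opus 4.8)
The plan is to reduce the statement to two facts already established: Theorem~\ref{thm: bracketing vectors}, which says that for $T_1,T_2$ in a common interval of $\mathscr{C}_n$ the meet $T_1\wedge T_2$ and join $T_1\vee T_2$ have bracketing vectors $\min(\langle T_1\rangle,\langle T_2\rangle)$ and $\max(\langle T_1\rangle,\langle T_2\rangle)$ (componentwise), and Pallo's Theorem~\ref{thm:pallo bracketing}, which identifies the Tamari order with componentwise comparison of bracketing vectors. Note first that the latter makes $T\mapsto\langle T\rangle$ injective (if $\langle T\rangle=\langle T'\rangle$ then $T\leq_{\mathscr{T}_n}T'$ and $T'\leq_{\mathscr{T}_n}T$), so it is enough to show that the $\mathscr{C}_n$-meet and the $\mathscr{C}_n$-join have the same bracketing vectors as the Tamari meet and join.

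For the meet, set $M=T_1\wedge T_2$. By Theorem~\ref{thm: bracketing vectors}, $\langle M\rangle=\min(\langle T_1\rangle,\langle T_2\rangle)\leq\langle T_i\rangle$ for $i=1,2$, so Theorem~\ref{thm:pallo bracketing} gives $M\leq_{\mathscr{T}_n}T_1$ and $M\leq_{\mathscr{T}_n}T_2$, whence $M\leq_{\mathscr{T}_n}T_1\wedge_{\mathscr{T}_n}T_2$. Conversely $T_1\wedge_{\mathscr{T}_n}T_2$ lies below both $T_i$ in $\mathscr{T}_n$, so its bracketing vector is componentwise at most each $\langle T_i\rangle$, hence at most $\min(\langle T_1\rangle,\langle T_2\rangle)=\langle M\rangle$, and Theorem~\ref{thm:pallo bracketing} gives $T_1\wedge_{\mathscr{T}_n}T_2\leq_{\mathscr{T}_n}M$. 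By antisymmetry $M=T_1\wedge_{\mathscr{T}_n}T_2$.

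For the join I would run the same argument upward, but here I would flag the one point worth care: the componentwise maximum of two bracketing vectors need not be a bracketing vector in general, so one cannot simply invoke a ``Pallo join formula''. The escape is that Theorem~\ref{thm: bracketing vectors} already supplies a tree realizing it, namely $U:=T_1\vee T_2$ with $\langle U\rangle=\max(\langle T_1\rangle,\langle T_2\rangle)$. Since $\langle U\rangle\geq\langle T_i\rangle$ componentwise, $U$ is an upper bound of $T_1,T_2$ in $\mathscr{T}_n$, so $T_1\vee_{\mathscr{T}_n}T_2\leq_{\mathscr{T}_n}U$; and since $T_1\vee_{\mathscr{T}_n}T_2$ dominates both $T_i$ in $\mathscr{T}_n$, its bracketing vector is componentwise at least each $\langle T_i\rangle$, hence at least $\max(\langle T_1\rangle,\langle T_2\rangle)=\langle U\rangle$, giving $U\leq_{\mathscr{T}_n}T_1\vee_{\mathscr{T}_n}T_2$. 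Antisymmetry finishes it.

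Thus once Theorems~\ref{thm: bracketing vectors} and~\ref{thm:pallo bracketing} are in hand the argument is purely formal, and I do not anticipate a genuine difficulty---only the need to phrase the join step via the universal property of the Tamari join rather than via a (false in general) ``max of vectors'' identity.
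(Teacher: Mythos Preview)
Your argument is correct and follows the same route as the paper: combine Theorem~\ref{thm: bracketing vectors} with Pallo's Theorem~\ref{thm:pallo bracketing} to identify the $\mathscr{C}_n$ and $\mathscr{T}_n$ operations via their bracketing vectors. Your join argument, using the universal property together with the fact that $T_1\vee T_2$ realizes the componentwise maximum, is precisely the ``same argument that Pallo uses'' to which the paper alludes; your meet argument reproves Pallo's meet formula via the order characterization rather than citing it directly, but this is a harmless variation.
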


\begin{proof}
The proof for the meet follows directly from Theorems~\ref{thm:pallo bracketing} and~\ref{thm: bracketing vectors}. For the join, observe that the tree corresponding to the componentwise maximum of $\langle T_1 \rangle$ and $\langle T_2 \rangle$ would be the join of $T_1$ and $T_2$ in $\mathscr{T}_n$. However, in general, one does not know that such a tree exists. However, Theorem~\ref{thm: bracketing vectors} gives that such a tree exists---it is the join of $T_1$ and $T_2$ in $\mathscr{C}_n$.
\end{proof}

\section{Relation with a Poset of Edelman} \label{Section 6}\label{sec:edelman}

In~\cite{edelman}, Edelman introduced a subposet of the right weak order on the symmetric group $\mathfrak{S}_n$.
Although this poset is not a lattice, the intervals are known to each be distributive lattices, as is the case for the comb poset $\mathscr{C}_n$. 

\begin{defn}
  The \textbf{right weak order on $\mathfrak{S}_n$} is a partial ordering of the elements of $\Sn$ defined as the transitive closure of the following covering relation: a permutation $\sigma$ covers a permutation $\tau$ if $\sigma$ is obtained from $\tau$ by a transposition of $\tau(i)$ and $\tau(i+1)$, two adjacent elements of the one line notation of $\tau$, such that $\tau(i)<\tau(i+1)$.  
  \end{defn}

Edelman imposed an additional constraint on this ordering, under which $\sigma$ covers $\tau$, if, after the transposition of $\tau(j)$ and $\tau({j + 1})$ as above, nothing to the left of $\tau({j + 1})$ in $\sigma$ is greater than $\tau({j + 1})$. This restriction results in a subposet of the right weak ordering on $\mathfrak{S}_n$. Denote this poset by $\mathscr{E}_n$.

\begin{example}
  Figure~\ref{fig: E3 hasse} depicts the Hasse diagram of $\mathscr{E}_3$, with an additional dashed edge indicating the extra order relation in the right weak order on $\mathfrak{S}_3$.
\end{example}

\begin{figure}[htbp]\begin{center}
\begin{tikzpicture}
    \tikzstyle{every node}=[draw,circle,fill=black,minimum size=4pt,
                            inner sep=0pt]

    \draw (0,0) node (01) [label=right:$(1\text{,}2\text{,}3)$]{};
    \draw (-2,2) node (02) [label=left:$(2\text{,}1\text{,}3)$]{};
    \draw (2,2) node (03) [label=right:$(1\text{,}3\text{,}2)$]{};
    \draw (-2,4) node (04) [label=left:$(2\text{,}3\text{,}1)$]{};
    \draw (2,4) node (05) [label=right:$(3\text{,}1\text{,}2)$]{};
    \draw (0,6) node (06) [label=left:$(3\text{,}2\text{,}1)$]{};
    \draw[very thick] (01) -- (02);
    \draw[very thick] (01) -- (03);
    \draw[very thick] (02) -- (04);
    \draw[very thick] (03) -- (05);
    \draw[very thick] (04) -- (06);
    \draw[dashed, red] (05) -- (06);
    
\end{tikzpicture}
\caption{Edelman's Poset $\mathscr{E}_3$.}
\label{fig: E3 hasse}
\end{center}
\end{figure}
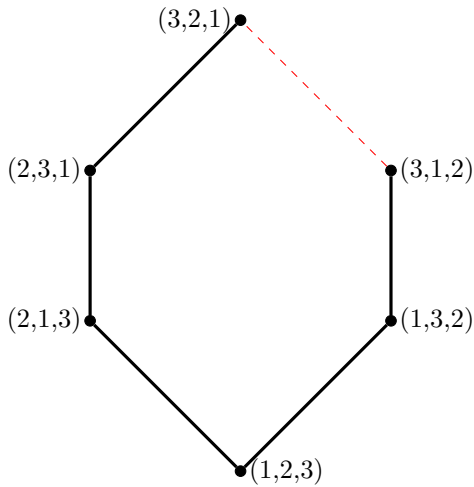


\begin{defn}\label{def: inverse stanley}
	The \textbf{pruned tree map}, $p \colon \mathfrak{S}_n \to \{\text{pruned trees on $n$ vertices}\}$, is defined recursively as follows. For $x \in \mathfrak{S}_1$, $p(x)$ is the tree with a single vertex. Then, for $n > 1$ and $x \in \mathfrak{S}_n$, define
	\[
		p(x)=
		\begin{tikzpicture}[baseline]
			\tikzstyle{every node}=[draw,circle,fill=black,minimum size=4pt,
                           inner sep=0pt]
			   \draw(0,1) node (01) {};
			   \draw (-1,0) node (02) [label=below:$p(x_<)$] {};
			   \draw (1,0) node (03) [label=below:$p(x_>)$] {};
			   \draw (01)--(02);
			   \draw (01)--(03);
		\end{tikzpicture}
	\]
	where $x_<=(x_{i_1},\ldots,x_{i_k})$ where $i_1<\cdots <i_k$ are the indices of all elements of $x$ less than $x_1$ and $x_>$ is defined similarly for elements of $x$ greater than $x_1$. Extend $p$ to a map $\beta \colon \mathfrak{S}_n \to \mathbb{T}_{n+1}$ by attaching leaves to $p(x)$ to give a binary tree (in other words, ``unpruning'' $p(x)$).
\end{defn}

\begin{remark}
  Amending the definition of $p$ slightly so that the root of $p(x)$ is labeled by $x_1$ results in the pruned tree having the \emph{in-order labeling}, where a vertex's label is greater than those of the vertices in its left subtree and smaller than those of the vertices in its right subtree. This labeled tree is, in fact, the unbalanced binary search tree for the permutation. (See~\cite{knuth}.) The pruned tree map is also related to the bijection between permutations and increasing binary trees on $n$ vertices (see~\cite[p.\ 24]{stanley}): the pruned tree associated to $w$ is the increasing binary tree associated to $w^{-1}$ with the labels removed. Consequently, the pruned tree map is a surjection.
\end{remark}

\begin{example}
	Figure~\ref{fig: bigedelmanex} shows $p\colon \mathfrak{S}_4\to \{\text{pruned trees with 4 vertices}\}$. Permutations having the same image are circled.
\end{example}
\begin{figure}[h]
    \begin{center}
	\includegraphics[scale=.5]{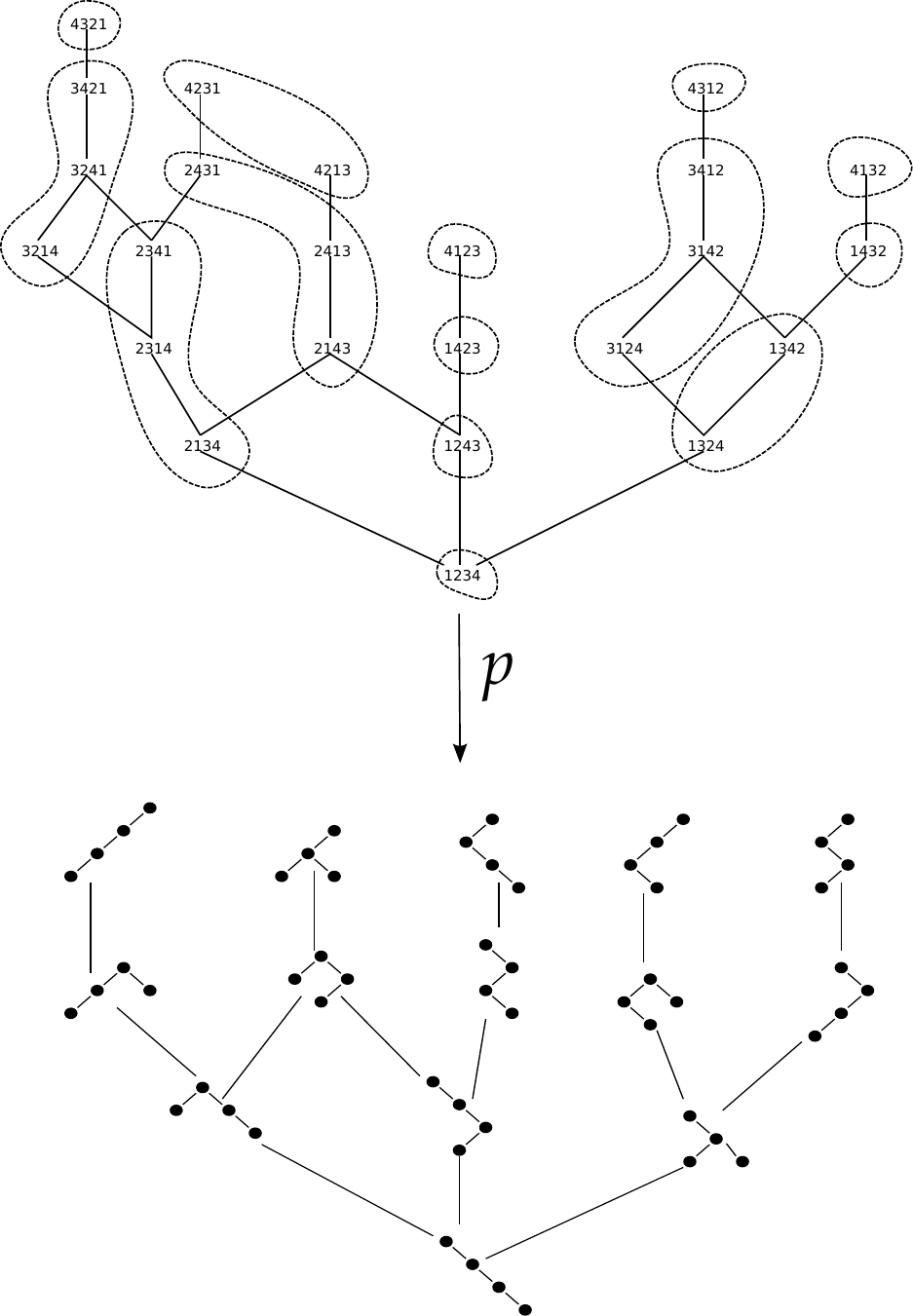}
	\caption{}
	\label{fig: bigedelmanex}
  \end{center}
\end{figure}

\begin{thm} \label{thm: inverse stanley is order preserving}
	The map $p \colon \mathfrak{S}_n \to \mathbb{T}_{n+1}$  gives an order-preserving surjection from $\mathscr{E}_n$ to $\mathscr{C}_{n + 1}$.
\end{thm}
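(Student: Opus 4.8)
The plan is to verify that $p$ respects covering relations in $\mathscr{E}_n$ --- which suffices since $\mathscr{C}_{n+1}$ is transitive --- while surjectivity is already recorded in the remark following Definition~\ref{def: inverse stanley}. So suppose $\sigma$ covers $\tau$ in $\mathscr{E}_n$, obtained by transposing the adjacent entries $a=\tau_j$ and $b=\tau_{j+1}$ with $a<b$. Edelman's extra condition says precisely that no entry of $\tau$ strictly to the left of $b$ exceeds $b$; since $a<b$ as well, $b$ is larger than every entry to its left in $\tau$ (and likewise in $\sigma$, where $b$ has only moved one step left). The goal is to show $RP_{\beta(\tau)}\subseteq RP_{\beta(\sigma)}$.

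First I would set up a dictionary. By the remark after Definition~\ref{def: inverse stanley}, $p(x)$ is the (unlabelled) binary search tree obtained by inserting $x_1,\dots,x_n$ in that order, and its in-order labelling assigns to each vertex its own value in $\{1,\dots,n\}$. By Definition~\ref{def: rpposet} together with Proposition~\ref{prop: elements = left subtrees}, the parenthesis pairs of $RP_{\beta(x)}$ correspond bijectively to the vertices of $p(x)$ off its right spine, namely to those vertices $v$ whose subtree has in-order interval $[\ell,r]$ with $r\le n-1$ (right-spine vertices being exactly those whose interval ends at $n$); the pair attached to such a $v$ encloses the leaves $a_\ell,\dots,a_{r+1}$, and its internal parenthesization is the binary search tree of $x|_{[\ell,r]}$ --- the subword of $x$ formed by the entries lying in $\{\ell,\dots,r\}$ --- with leaves attached. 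Thus $p(\tau)\le p(\sigma)$ amounts to the statement that every interval $[\ell,r]$ with $r\le n-1$ that is a subtree-interval of $p(\tau)$ is also a subtree-interval of $p(\sigma)$, and that $\tau|_{[\ell,r]}=\sigma|_{[\ell,r]}$ for each such interval.

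The key point is that Edelman's condition places $b$ on the right spine of $p(\tau)$: at the step when $b$ is inserted every earlier entry is smaller, so $b$ becomes the rightmost vertex, and no later insertion dislodges it from the right spine; the same holds for $p(\sigma)$. Hence, if $v$ is a vertex of $p(\tau)$ off the right spine with subtree-interval $[\ell,r]$ (so $r\le n-1$), then $b\notin\{\ell,\dots,r\}$, since otherwise $v$ would be $b$ itself or a proper ancestor of $b$, forcing $r\ge n$. Since $\tau$ and $\sigma$ differ only in the relative order of $a$ and $b$ and $b$ lies outside $\{\ell,\dots,r\}$, we get $\tau|_{[\ell,r]}=\sigma|_{[\ell,r]}$; in particular this subword has the same first entry $v^{*}$ in both, and $v^{*}\neq b$. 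It then remains to see that $[\ell,r]$ is a subtree-interval of $p(\sigma)$ too, and here I would use the standard description of the subtrees of a binary search tree: $[\ell,r]$ is a subtree-interval of the binary search tree of a permutation $w$ exactly when $\ell-1$ (if $\ell\ge 2$) and $r+1$ (if $r\le n-1$) both occur before $v^{*}$ in $w$. Each of these ``occurs before'' relations is unaffected by the transposition of $a$ and $b$ unless one of the pairs $\{\ell-1,v^{*}\}$, $\{r+1,v^{*}\}$ equals $\{a,b\}$; since $v^{*}\neq b$ that would force $v^{*}=a$ with $b\in\{\ell-1,r+1\}$, but $b=\ell-1$ gives $b<\ell\le v^{*}=a$ against $a<b$, while $b=r+1$ would force $b$ to occur before $v^{*}=a$ in $\tau$ in order for $[\ell,r]$ to be a subtree-interval of $p(\tau)$ at all --- impossible since $a=\tau_j$ is immediately followed by $b=\tau_{j+1}$. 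So the subtree-interval test gives the same answer for $\sigma$ as for $\tau$, and combined with $\tau|_{[\ell,r]}=\sigma|_{[\ell,r]}$ this identifies the two parenthesis pairs; hence $RP_{\beta(\tau)}\subseteq RP_{\beta(\sigma)}$, i.e.\ $p(\tau)\le p(\sigma)$.

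I expect the main obstacle to be the dictionary of the second paragraph, and in particular the realization that it is genuinely the \emph{Edelman} restriction, rather than an arbitrary weak-order covering, that makes the argument run: a bare weak-order cover can carry incomparable trees (for instance the dashed edge $(3,1,2)$--$(3,2,1)$ in $\mathscr{E}_3$ yields two trees with neither below the other), and what rescues the argument is exactly that the moved entry $b$ never leaves the right spine of the associated binary search tree, so that it cannot interfere with any of the parenthesis pairs of $RP_{\beta(\tau)}$.
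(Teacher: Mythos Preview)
Your argument is correct and takes a genuinely different route from the paper. The paper proceeds by direct case analysis on the cover $\tau\lessdot\sigma$: it first disposes of $j=1$ as a root rotation, and for $j\ne 1$ it separates the case where some earlier entry $x_s$ lies strictly between $x_{j+1}$ and $x_j$ (forcing $p(\tau)=p(\sigma)$) from the case where no such $x_s$ exists, in which an explicit picture exhibits $p(\sigma)$ as a single right-arm rotation of $p(\tau)$. Your proof instead sets up a dictionary between parenthesis pairs of $RP_{\beta(x)}$ and off-right-spine subtree-intervals of the binary search tree $p(x)$, and then checks uniformly that each such interval for $\tau$ persists for $\sigma$ via the standard ``$\ell-1$ and $r+1$ precede $v^{*}$'' criterion; the Edelman condition enters exactly as the fact that $b$ sits on the right spine and hence cannot lie in any such interval. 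The paper's approach yields the finer information that $p(\sigma)$ is either equal to or a cover of $p(\tau)$, which it later exploits; your approach avoids case splits and pictures entirely and makes the role of the Edelman restriction more transparent. One small imprecision: where you say ``$p(\tau)\le p(\sigma)$ amounts to'' the conjunction of subtree-interval membership and $\tau|_{[\ell,r]}=\sigma|_{[\ell,r]}$, strictly speaking only equality of the induced binary search trees is required for the internal parenthesizations to match; but since you prove the stronger equality of subwords this does not affect the validity of the argument.
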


\begin{proof}
  As noted above, it is well-known that $p$ is a surjection. It suffices to show that if $\sigma \lessdot \tau$ in $\mathscr{E}_n$ and $T_1=p(\sigma)$ and $T_2=p(\tau)$, then either $T_1 = T_2$, or $T_1 \lessdot T_2$ in $\mathscr{C}_{n + 1}$.

  Suppose 
  \[ 
  \sigma=(x_1, x_2, \ldots, x_j, x_{j + 1}, \ldots, x_n) \in \mathfrak{S}_n \quad\text{and} \quad \tau=(x_1, x_2, \ldots, x_{j + 1}, x_j, x_{j + 2}, \ldots, x_n) \in \mathfrak{S}_n,
  \]
  with $\tau$ covering $\sigma$ in $\mathscr{E}_n$. One then has that $x_s < x_{j + 1}$ for all $s < j+1$. Now, if $j = 1$, then the transposition changing $\sigma$ to $\tau$ corresponds, in the image of $p$, to a left rotation centered on the root, and therefore $T_1 \lessdot T_2$ in $\mathscr{C}_{n + 1}$. So assume $j \neq 1$; in other words, $x_j$ is not the root $x_1$ of the tree.

Recall that $x_s <x_{j+1}$ for all $s<j+1$. Suppose there is an $s<j$ such that $x_j <x_s<x_{j+1}$. Then, from the definition of $p$, one knows that the vertex labeled $x_j$ lies in the left subtrees of that labeled $x_s$ and $x_{j+1}$ lies in the right subtree. When $x_{j}$ and $x_{j+1}$ are exchanged to obtain $\tau$, their positions in the image of $p$ do not change and $T_1=T_2$.

Now suppose there is no $s<j$ such that $x_j <x_s<x_{j+1}$. In such a case, $T_1$ has the form 
\begin{center}

\begin{tikzpicture}
    \tikzstyle{every node}=[draw,circle,fill=white,minimum size=10pt,
                            inner sep=0pt]

    \draw (-1,3) node (01) [label=right:$S$]{};
    \draw (-1,1) node (02) [label=left:$X$]{};
    \draw (0,0) node (03) [label=left:$Y$]{};
    \draw (2,0) node (04) [label=right:$Z$]{};
    \node[draw=none] at (1.5, 2.5) {\Large $T_1$};
    
    \tikzstyle{every node}=[draw,circle,fill=black,minimum size=4pt,
                            inner sep=0pt]

    \draw (0,2) node (05) [label=right:$x_j$]{};
    \draw (1,1) node (06) [label=right:$x_{j + 1}$]{};
    \draw (01) -- (05);
    \draw (05) -- (02);
    \draw[thick, red] (05) -- (06);
    \draw (06) -- (03);
    \draw (06) -- (04);

\end{tikzpicture}
\end{center}
Here the white circle $S$ denotes the parent tree of the entire subtree shown, with the condition that $x_j$ and $x_{j + 1}$ lie on the right arm. The white circles $X$, $Y$ and $Z$ denote arbitrary subtrees, whose interpretations in terms of the elements in $\sigma$ are as follows: $X$ is the image under $P$ of the ordered sequence of elements appearing \emph{after} $x_j$ which are less than $x_j$, while $Z$ is the ordered sequence of elements appearing \emph{after} $x_{j + 1}$ which are greater than $x_{j + 1}$, and $Y$ is the ordered sequence of elements appearing after $x_j$ that lie between $x_j$ and $x_{j + 1}$.

Now, consider what happens to $T_2$, when $x_j$ and $x_{j + 1}$ are exchanged. The tree $T_2$ is depicted below.

\begin{center}

\begin{tikzpicture}
    \tikzstyle{every node}=[draw,circle,fill=white,minimum size=10pt,
                            inner sep=0pt]

    \draw (-1,3) node (01) [label=right:$S$]{};
    \draw (-2,0) node (02) [label=left:$X'$]{};
    \draw (0,0) node (03) [label=left:$Y'$]{};
    \draw (1,1) node (04) [label=right:$Z'$]{};
    \node[draw=none] at (-1.5, 2) {\Large $T_2$};
    
    \tikzstyle{every node}=[draw,circle,fill=black,minimum size=4pt,
                            inner sep=0pt]

    \draw (0,2) node (05) [label=right:$x_{j + 1}$]{};
    \draw (-1,1) node (06) [label=left:$x_j$]{};
    \draw (01) -- (05);
    \draw (05) -- (04);
    \draw[thick, red] (05) -- (06);
    \draw (06) -- (02);
    \draw (06) -- (03);

\end{tikzpicture}

\end{center}
Here, $S$ is going to be unchanged, and $x_j$ and $x_{j + 1}$ must move as shown. In addition, there will be subtrees $X'$, $Y'$, and $Z'$ as drawn above. However, notice that, if one considers what these subtrees must be with respect to the permutation $\tau$, the fact that $x_j$ and $x_{j + 1}$ are adjacent forces the conclusion that the subtrees are unchanged from $\sigma$, or in other words that $X = X'$, $Y = Y'$ and $Z = Z'$. So then, $T_2$ is obtained by a left rotation centered on a vertex on the right arm of $T_1$. Therefore, $T_2$ covers $T_1$ in $\mathscr{C}_{n + 1}$, completing the proof.
\end{proof}

To relate the intervals of $\mathscr{C}_{n+1}$ to those of $\mathscr{E}_n$ more deeply, a formal discussion of $\mathscr{E}_n$ is needed. In \cite{edelman}, Edelman defined the following order on the inversion set of a permutation $\sigma$.

\begin{defn} \label{defn: edelmaninv}
Define $I(\sigma) := \{(j, i) : j > i \text{ and } \sigma^{-1}(j) < \sigma^{-1}(i)\}$. Order $I(\sigma)$, with $(k, \ell) \geq (j, i)$ if and only if $k \geq j$ and $\sigma^{-1}(\ell) \leq \sigma^{-1}(i)$. In a slight abuse of notation, the poset $(I(\sigma), <)$ shall be referred to as $I(\sigma)$ as well.
\end{defn}

\begin{thm}[{Edelman, \cite[Theorem 2.13]{edelman}}] \label{thm: edelman}
  $[e,w]_{\mathscr{E}_n} \simeq J(I(w))$, where $[e,w]_{\mathscr{E}_n} = \{v \in \mathfrak{S}_n : v \leq_{\mathscr{E}_n} w\}$, via $v \mapsto I(v)$.
\end{thm}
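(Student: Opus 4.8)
The plan is to check directly that the map $\varphi\colon v\mapsto I(v)$ is a poset isomorphism $[e,w]_{\mathscr{E}_n}\to J(I(w))$, by establishing four things: (1) $\varphi$ lands in $J(I(w))$, i.e. for $v\le_{\mathscr{E}_n}w$ the set $I(v)$ is contained in $I(w)$ and is an order ideal of $(I(w),\le)$; (2) $\varphi$ is injective; (3) $\varphi$ is surjective; (4) $\varphi$ and $\varphi^{-1}$ are order-preserving. Injectivity (2) is free: a permutation is recovered from its inversion set, since for each pair of values $i<j$ the membership of $(j,i)$ in $I(\sigma)$ records whether $j$ precedes $i$, and these data over all pairs determine the one-line notation. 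It is also worth recording the elementary facts that $\mathscr{E}_n$ is a subposet of the weak order, that each of its defining covers adds exactly one inversion (so $\mathscr{E}_n$ is graded by $|I(\cdot)|$ and saturated chains exist), and that a set of pairs is an inversion set of a permutation exactly when it is ``biconvex''; from the last fact, any order ideal $D$ of $I(w)$ is itself biconvex (it inherits closure and co-closure from $I(w)$ because it is downward closed), hence $D=I(v_D)$ for a unique permutation $v_D$.

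The combinatorial heart of the argument is one lemma relating $\mathscr{E}_n$-covers to the Edelman order on $I(w)$:

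\emph{Key Lemma.} Let $D$ be an order ideal of $(I(w),\le)$ and let $v_D$ be the permutation with $I(v_D)=D$. For $\alpha\in D$, the permutation $v_{D\setminus\alpha}$ is defined and is covered by $v_D$ in $\mathscr{E}_n$ if and only if $\alpha$ is a maximal element of $D$; dually, for a minimal element $\alpha$ of $I(w)\setminus D$, the set $D\cup\{\alpha\}$ is again an order ideal of $I(w)$ and $v_D\lessdot_{\mathscr{E}_n}v_{D\cup\{\alpha\}}$.

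Proving the Key Lemma means unpacking the definition of an $\mathscr{E}_n$-cover — an adjacent transposition of positions $j,j+1$ with $x_j<x_{j+1}$ such that nothing to the left of the new occurrence of $x_{j+1}$ exceeds it — alongside the definition of $\le$ on $I(w)$, and checking that the inversion created (or destroyed) by such a cover is exactly a ``freely addable'' one, namely a minimal element of the complement of the current inversion set inside $I(w)$ (resp. a maximal element of the current ideal), and conversely. This is precisely where Edelman's extra covering constraint is essential: without it the plain weak order supplies additional covers that do not respect the ideal structure. I expect this matching of the two combinatorial conditions, together with the bookkeeping needed to see that the Edelman order ``restricts correctly'' along the interval (so that the elements one is allowed to peel off $I(v)$ are exactly the maximal elements in $I(w)$'s order), to be the main obstacle; it is the technical core of Edelman's proof.

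Granting the Key Lemma, the remaining items are routine. For surjectivity (3): given an order ideal $D$, repeatedly adjoin a minimal element of $I(w)\setminus D$; each such step stays inside the lattice of ideals of $I(w)$ and, by the Key Lemma, is an $\mathscr{E}_n$-cover, and after finitely many steps we reach $I(w)$ itself, whose permutation is $w$; reading this chain in reverse exhibits a saturated $\mathscr{E}_n$-chain from $v_D$ up to $w$, so $v_D\in[e,w]_{\mathscr{E}_n}$ and $\varphi(v_D)=D$. For (1): given $v\le_{\mathscr{E}_n}w$, take a saturated $\mathscr{E}_n$-chain from $v$ up to $w$; starting from $I(w)$ (an ideal) and descending the chain, the Key Lemma says each downward cover removes a maximal element of the current ideal, so every set along the way — in particular $I(v)$ — is an order ideal of $I(w)$. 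For order-preservation (4): $v\le_{\mathscr{E}_n}v'$ forces $I(v)\subseteq I(v')$ since each $\mathscr{E}_n$-cover adds one inversion; conversely, if $I(v)\subseteq I(v')$ with both in the interval, then $I(v)$ is an order ideal contained in the order ideal $I(v')$, and building up from $I(v)$ to $I(v')$ by successively adjoining minimal elements of $I(v')\setminus I(v)$ (each such element being minimal in $I(w)\setminus$ the current ideal as well, since $I(v')$ is downward closed) produces, via the Key Lemma, a saturated $\mathscr{E}_n$-chain from $v$ to $v'$, whence $v\le_{\mathscr{E}_n}v'$. Together these give the claimed isomorphism $[e,w]_{\mathscr{E}_n}\simeq J(I(w))$.
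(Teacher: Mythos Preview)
The paper does not prove this theorem; it is quoted with attribution to Edelman's original paper and used as a black box in the subsequent discussion. There is thus no proof in the present paper to compare your proposal against.

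That said, your outline is a plausible reconstruction of such an argument. Two remarks. First, the claim that every order ideal $D$ of $(I(w),\le)$ is biconvex is correct, but your parenthetical justification (``it inherits closure and co-closure from $I(w)$ because it is downward closed'') is too quick: the verification genuinely uses Edelman's specific order on $I(w)$. For instance, for closure one needs that if $(j,i),(k,j)\in D$ with $i<j<k$ then $(k,i)\le(k,j)$ in Edelman's order, which amounts to $w^{-1}(i)\ge w^{-1}(j)$ and follows from $(j,i)\in I(w)$; the co-closure check is similar. Second, you yourself flag the Key Lemma as the technical core and do not prove it. The deductions you make from it (surjectivity, well-definedness of $\varphi$, order-preservation both ways) are sound, but as written this is a strategy rather than a proof: the matching between the $\mathscr{E}_n$-covering condition (the transposed smaller entry becomes a left-to-right maximum) and minimality in $I(w)\setminus D$ under Edelman's order still needs to be carried out.
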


\begin{defn} \label{defn: leftarms}
  Fix a permutation $w \in \mathfrak{S}_n$. Let $T_w$ be the image of $w$ under the pruned tree map, $p$. Recall the reduced pruned poset from Definition~\ref{def: rpposet}. Here it will be useful to label its vertices by the labels they have in $T_w$, rather than by pairs of parentheses as in the definition of $P_T$. Define a map $f : P_{T_w} \to I(w)$ as follows: $f(j) = (i, j)$, where $i$ is the smallest label of a vertex of $T_w$ such that $j$ lies in the left subtree of $i$.
\end{defn}

\begin{example}
	Suppose $w = (4, 9, 2, 1, 8, 3, 6, 7, 5) \in \mathfrak{S}_9$. Figure~\ref{fig: f example} depicts $T_w$, $P_{T_w}$ and $I(w)$, with the image of $f$ indicated in $I(w)$.
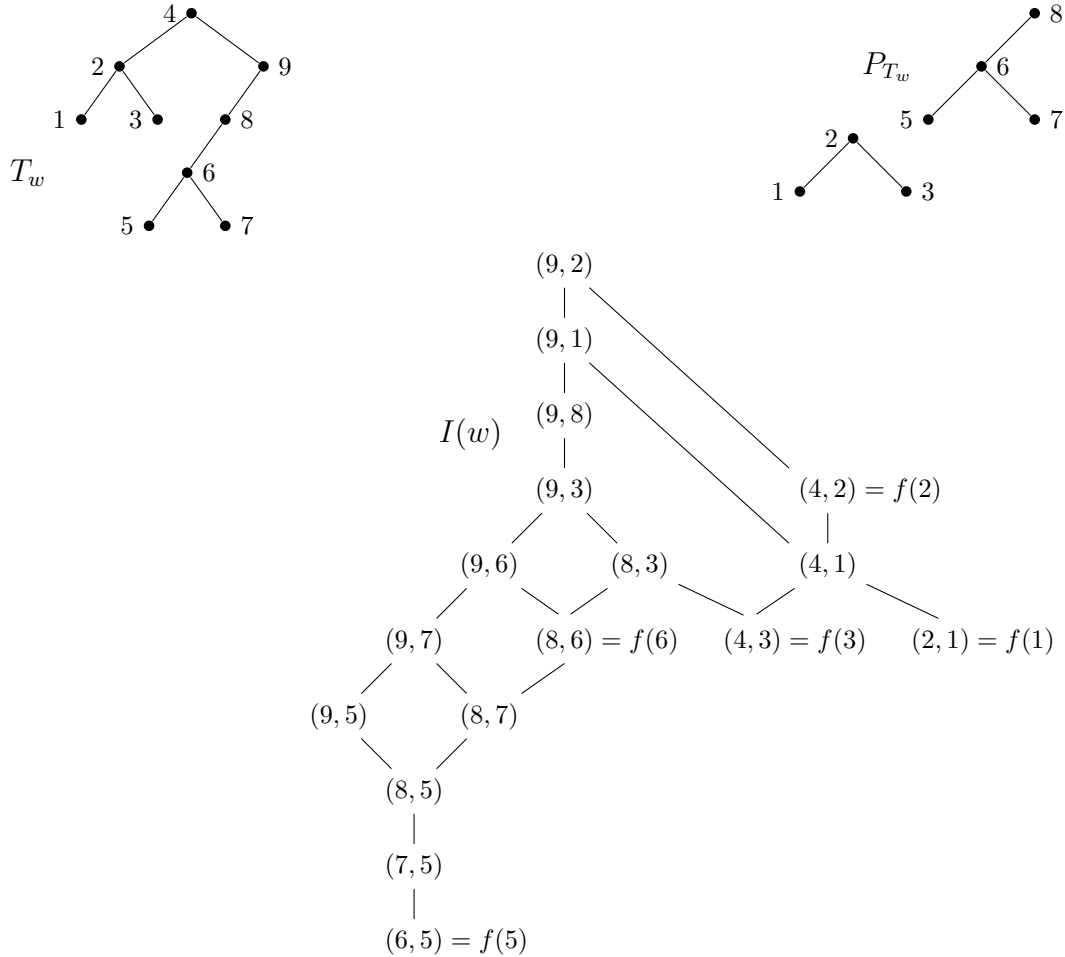
\begin{figure}	\begin{center}
	  \begin{tikzpicture}
      \matrix[]{
      \begin{scope}[vertex/.style={circle,fill=black,minimum size=4pt,inner sep=0mm}]
        \node[vertex,label=left:4] (4) at (0,0) {};
        \node[vertex,below left of=4,xshift=-.25cm,label=left:2] (2) {};
        \node[vertex,below right of=4,xshift=.25cm,label=right:9] (9) {};
        \node[vertex,below left of=2,xshift=.2cm,label=left:1] (1) {};
        \node[vertex,below right of=2,xshift=-.2cm,label=left:3] (3) {};
        \node[vertex,below left of=9,xshift=.2cm,label=right:8] (8) {};
        \node[vertex,below left of=8,xshift=.2cm,label=right:6] (6) {};
        \node[vertex,below left of=6,xshift=.2cm,label=left:5] (5) {};
        \node[vertex,below right of=6,xshift=-.2cm,label=right:7] (7) {};
        \node[below left of=1] {\Large{$T_w$}};
        \draw (4)--(2)--(1);
        \draw (2)--(3);
        \draw (4)--(9)--(8)--(6)--(5);
        \draw (6)--(7);
      \end{scope}
      &&[-4cm] 
      \begin{scope}[vertex/.style={circle,fill=black,minimum size=4pt,inner sep=0mm}]
      \node[vertex,label=right:8] (8) at (0,0) {};
      \node[vertex,below left of=8,label=right:6] (6) {};
      \node[vertex,below left of=6,label=left:5] (5) {};
      \node[vertex,below right of=6,label=right:7] (7) {};
      \node[vertex,left of=5,label=left:2,yshift=-.25cm] (2) {};
      \node[vertex,below left of=2,label=left:1] (1) {};
      \node[vertex,below right of=2,label=right:3] (3) {}; 
      \node[left of=6,xshift=-.25cm] {\Large{$P_{T_w}$}};
      \draw(8)--(6)--(5);
      \draw(6)--(7);
      \draw(2)--(1);
      \draw(2)--(3); 
      \end{scope}
      \\
      &
      \begin{scope}[every node/.style={anchor=west}]
        \node (65) at (0,0) {$(6,5)=f(5)$};
        \node (75) at (0,1) {$(7,5)$};
        \node (85) at (0,2) {$(8,5)$};
        \node (95) at (-1,3) {$(9,5)$};
        \node (87) at (1,3) {$(8,7)$};
        \node (97) at (0,4) {$(9,7)$};
        \node (86) at (2,4) {$(8,6)=f(6)$};
        \node (96) at (1,5) {$(9,6)$};
        \node (83) at (3,5) {$(8,3)$};
        \node (93) at (2,6) {$(9,3)$};
        \node (43) at (4.5,4) {$(4,3)=f(3)$};
        \node (21) at (7,4) {$(2,1)=f(1)$};
        \node (41) at (5.5,5) {$(4,1)$};
        \node (42) at (5.5,6) {$(4,2)=f(2)$};
        \node (98) at (2,7) {$(9,8)$};
        \node (91) at (2,8) {$(9,1)$};
        \node (92) at (2,9) {$(9,2)$};
        \node (a) at (4.5,5) {\phantom{(4,3)}};
        \node (b) at (7,4) {\phantom{(2,1)}};
        \node[left of=98,xshift=-.25cm,yshift=-.25cm] {\Large{$I(w)$}}; 
        \draw (65.north -| 75)--(75)--(85)--(95);
        \draw (85)--(87)--(97);
        \draw (95)--(97)--(96)--(93);
        \draw (87)--(86.south -| 93);
        \draw[shorten <=1mm] (86.north -| 93)--(96);
        \draw[shorten <=1mm] (86.north -| 93)--(83);
        \draw (83)--(93)--(98)--(91)--(92);
        \draw[shorten <=1mm] (43.north -| a)--(41);
        \draw (41)--(42.south -|41);
        \draw[shorten <=1mm] (43.north -| a)--(83);
        \draw (41.north west)--(91);
        \draw (42.north west)--(92);
        \draw (21.north -| b)--(41);
      \end{scope}
      &
      \\
      };
    \end{tikzpicture}
	\end{center}
	\caption{$T_w$, $P_{T_w}$ and $I(w)$ with the image of $f$ indicated.}
	\label{fig: f example}
\end{figure}
\end{example}

\begin{prop} \label{prop: forderpres}
The map $f$ is order-preserving.
\end{prop}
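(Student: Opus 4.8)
The plan is to show that the map $f \colon P_{T_w} \to I(w)$ defined in Definition~\ref{defn: leftarms} respects the order relations on both sides, using the explicit description of each. Recall that elements of $P_{T_w}$ correspond to vertices $j$ of $T_w$ that have nontrivial left subtrees; by the definition of the pruned tree map, such a $j$ is on the right arm of some subtree rooted at an ancestor, and $P_{T_w}$ orders these by the inclusion order on the associated parenthesis pairs (equivalently, by inclusion of the left subtrees themselves). On the other side, $I(w)$ is ordered by $(k,\ell) \geq (j,i)$ iff $k \geq j$ and $\sigma^{-1}(\ell) \leq \sigma^{-1}(i)$, where $\sigma = w$.

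First I would unwind what $f(j) = (i,j)$ means in terms of $w$: the vertex labeled $j$ in $T_w$ lies in the left subtree of the vertex labeled $i$, where $i$ is the \emph{smallest} such label. Since left subtrees in the pruned tree map of $w$ correspond to the indices appearing before the root-label that are smaller than it, "$j$ in the left subtree of $i$" translates to "$i > j$ and $w^{-1}(i) < w^{-1}(j)$, and no label strictly between $j$ and $i$ is an ancestor of $j$ that puts $j$ on its left" — in any case $(i,j) \in I(w)$, so $f$ is well-defined into $I(w)$. I would record the key structural fact: the pair $f(j) = (i,j)$ has $i$ equal to the root-label of the \emph{smallest} subtree of $T_w$ containing $j$ on its right arm (more precisely, the nearest ancestor $i$ of $j$ with $i > j$, i.e.\ the one for which $j$ sits in $i$'s left subtree), and this $i$ is exactly the minimum over all ancestors $a$ of $j$ with $j$ in the left subtree of $a$.

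Next, suppose $j' \geq j$ in $P_{T_w}$, meaning the left subtree hanging at $j$ is contained in the left subtree hanging at $j'$; equivalently, $j$ lies weakly below $j'$ and $j$ is in the left subtree of $j'$ (or $j = j'$). Write $f(j) = (i,j)$ and $f(j') = (i',j')$. I need $i' \geq j'$ — wait, rather I need to verify the two inequalities defining the order on $I(w)$: that the first coordinates satisfy $i' \geq i$ is false in general, so let me re-read: the order is $(k,\ell)\ge(j,i)$ iff $k\ge j$ and $\sigma^{-1}(\ell)\le\sigma^{-1}(i)$. So with $f(j')=(i',j')$ playing the role of $(k,\ell)$ and $f(j)=(i,j)$ playing the role of $(j,i)$, I need $i' \geq i$ and $w^{-1}(j') \leq w^{-1}(j)$. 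The second inequality: since $j$ is in the left subtree of $j'$, the index of $j$ comes after the index of $j'$ in one-line notation, so $w^{-1}(j) > w^{-1}(j')$, giving $w^{-1}(j') \le w^{-1}(j)$ as needed. For the first inequality $i' \ge i$: the vertex $i'$ is an ancestor of $j'$ with $j'$ in its left subtree and $i'$ minimal such; but $i'$ is then also an ancestor of $j$ (since $j$ is below $j'$) with $j$ in its left subtree, so $i$, being the minimum over \emph{all} such ancestors of $j$, satisfies $i \le i'$. That is exactly $i' \ge i$. (The edge case $j = j'$ is trivial.)

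The main obstacle, and the step I would spend the most care on, is the bookkeeping verifying that "$j$ in the left subtree of $j'$ in $T_w$" is genuinely equivalent to the covering/order relation in $P_{T_w}$ — i.e.\ correctly matching Definition~\ref{def: rpposet}'s "parentheses ordered by inclusion" with containment of dangling left subtrees along a right arm (this is essentially Proposition~\ref{prop: elements = left subtrees}), and making sure the "smallest label" clause in the definition of $f$ behaves monotonically as claimed. Once that dictionary is pinned down, the two inequalities above follow directly from the tree structure, so I would present the proof as: (1) translate $P_{T_w}$-order into subtree containment; (2) translate $f$ into "nearest larger ancestor"; (3) deduce the two defining inequalities of the $I(w)$-order from containment of subtrees and minimality of the ancestor label.
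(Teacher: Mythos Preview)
Your approach is sound and, once a mischaracterization of the order on $P_{T_w}$ is fixed, the argument goes through---in fact more directly than the paper's.

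The misstatement: you assert that $j' \ge j$ in $P_{T_w}$ means ``$j$ lies in the left subtree of $j'$'', and earlier that elements of $P_{T_w}$ are ``vertices with nontrivial left subtrees''. Neither is correct. The elements of $P_{T_w}$ are the vertices of (pruned) $T_w$ \emph{not on the right arm}, and the order is containment of the associated parenthesis pairs, i.e.\ $j' \ge j$ iff $j$ is a descendant of $j'$ in $T_w$---possibly via the \emph{right} subtree of $j'$, not only the left. Fortunately both of your inequality checks survive this correction. For $w^{-1}(j') \le w^{-1}(j)$: by the recursive definition of the pruned tree map, \emph{any} descendant of $j'$ appears after $j'$ in one-line notation, so the conclusion holds without the left-subtree restriction. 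For $i' \ge i$: your argument never used that restriction---you only need that $j$ lies below $j'$, hence in the left subtree of $i'$, making $i'$ one of the candidates over which $i$ is minimal.

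The paper proceeds differently: it reduces to covering relations in $P_{T_w}$ and then splits into two cases according to whether $k$ is the left or right child of $j$, computing $f(k)$ explicitly in each case (namely $f(k)=(j,k)$ or $f(k)=(i,k)$, respectively) before verifying the two inequalities. Your minimality argument handles arbitrary comparable pairs at once and avoids the case split, which is cleaner; the paper's version trades that for explicit formulas for $f(k)$, making the check more concrete. Either way the ``main obstacle'' you flag is exactly the dictionary just clarified: once you replace ``$j$ in the left subtree of $j'$'' by ``$j$ a descendant of $j'$'', the rest of your outline is a complete proof.
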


\begin{proof}
  It suffices to show that if $j > k$, and $j$ covers $k$ in $P_{T_w}$, then $f(j) > f(k)$. Since $j$ covers $k$, one has that $k$ is a child of $j$, and there are two cases.
\begin{enumerate}
\item If $k$ is a left child of $j$, then $f(k) = (j, k)$. By the definition of the pruned tree map (Definition \ref{def: inverse stanley}), one knows $w^{-1}(j) < w^{-1}(k)$. Suppose $f(j) = (i, j)$. Then, by definition, $j < i$, which means that $(j, k) < (i, j)$ in $I(w)$, as desired.

\item If $k$ is a right child of $j$, then $f(j) = (i, j)$ means that $f(k) = (i, k)$. Now, $w^{-1}(j) < w^{-1}(k)$, and so $(i, j) > (i, k)$, as desired.

\end{enumerate}
These cover all the cases, proving the result.
\end{proof}

\begin{defn}\label{def: BP}
  Let $P_1,P_2$ be two posets and suppose $\phi \colon P_1\to P_2$ is order-preserving. Then $\phi$ induces a map $J(\phi)\colon J(P_2)\to J(P_1)$ defined by $J(\phi)(I)=\phi^{-1}(I)$. One calls $J(\phi)$ the \emph{Birkhoff-Priestley dual} to $\phi$. In fact, $J(\phi)$ is a lattice morphism. 
\end{defn}

For further details on Birkhoff-Priestley duality, see~\cite[Theorem 3.4.1]{stanley}.

\begin{thm} \label{thm: B-Pdual}
For each $w \in \mathfrak{S}_n$, the map $f \colon P_{T_w}\to I(w)$ defined in Definition~\ref{defn: leftarms} is Birkhoff-Priestley dual to the pruned tree map $p \colon [e,w]_{\mathscr{E}_n} \to J(P_{T_w})$. In particular, $p \colon \mathscr{E}_n \to \mathscr{C}_n$ becomes a lattice morphism when restricted to any interval in $\mathscr{E}_n$. As a commutative diagram, one has
\[
	\begin{tikzpicture}[
		bij/.style={above,sloped,inner sep=0.5pt}]
		\matrix (a) [matrix of math nodes,row sep=3em,column sep=3em]
		{
		\mathscr{E}_n & \mathscr{C}_{n+1} \\
		{[e,w]}_{\mathscr{E}_n} & {[\RCT{n+1},T_w]}_{\mathscr{C}_{n+1}} \\
		J(I(w)) & J(P_{T_w}) \\
		};
		\path[->] (a-1-1) edge node[auto] {$p$} (a-1-2);
		\path[->] (a-2-1) edge node[auto] {$p|_{[e,w]_{\mathscr{E}_n}}$}(a-2-2);
		\path[right hook->] (a-2-1) edge (a-1-1);
		\path[right hook->] (a-2-2) edge (a-1-2);
		\path[->] (a-3-1) edge node[auto] {$J(f)$}(a-3-2);
		\path[->] (a-3-1) edge node[bij] {$\sim$}(a-2-1);
		\path[->] (a-3-2) edge node[bij] {$\sim$}(a-2-2);
	\end{tikzpicture}
\]
\end{thm}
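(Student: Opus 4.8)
The plan is to verify the commutativity of the square in the diagram by a direct computation at the level of order ideals, after which the remaining assertions follow formally. Concretely, I would fix $w \in \mathfrak{S}_n$, and chase an element $I \in J(I(w))$ around both paths. The bottom edge sends $I$ to $J(f)(I) = f^{-1}(I) \in J(P_{T_w})$, while the right-hand map $J(P_{T_w}) \to [\RCT{n+1},T_w]_{\mathscr{C}_{n+1}}$ is the isomorphism of Proposition~\ref{prop: JPT interval}, taking an order ideal to the tree whose reduced pruned poset consists of exactly those parentheses. On the other path, $I \in J(I(w))$ corresponds via Edelman's Theorem~\ref{thm: edelman} to a permutation $v \leq_{\mathscr{E}_n} w$ with $I(v) = I$, and then $p(v) = T_v \in \mathscr{C}_{n+1}$. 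So I must show that the parenthesis pairs of $RP_{T_v}$ are precisely the vertices $j \in P_{T_w}$ with $f(j) \in I(v)$.

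The key step is therefore to understand, for $v \leq_{\mathscr{E}_n} w$, which left subtrees survive when passing from $T_w$ to $T_v$. Here I would lean on the combinatorial description of order ideals in $P_{T_w}$ given in the Remark preceding the theorem: an order ideal determines an in-order labeled pruned tree in which each connected component of the sub-Hasse-diagram becomes a left subtree hanging off the right arm, and a component with vertex set $\{i_1,\dots,i_k\}$ hangs off the vertex $\max_j i_j + 1$. Using Definition~\ref{defn: leftarms}, $f(j) = (i,j)$ where $i$ is the least label with $j$ in its left subtree in $T_w$; translating through Edelman's inversion order (Definition~\ref{defn: edelmaninv}), the condition $f(j) = (i,j) \in I(v)$ says exactly that $j$ still lies in the left subtree of $i$ in $T_v$, i.e. that the parenthesis pair corresponding to $j$ persists. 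I would make this precise by induction on the structure of the pruned tree (mirroring the recursion in Definition~\ref{def: inverse stanley}), checking that $f^{-1}(I(v))$, as an order ideal of $P_{T_w}$, reassembles via the Remark's recipe into exactly $P_{T_v}$, hence into $RP_{T_v}$.

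Once the square commutes, the rest is formal: the two vertical maps are the isomorphisms of Theorems~\ref{thm: edelman} and~\ref{prop: JPT interval}, so $p|_{[e,w]_{\mathscr{E}_n}}$ is conjugate to $J(f)$, which is a lattice morphism by Definition~\ref{def: BP} (being a Birkhoff–Priestley dual). Since the lattice operations in $[\RCT{n+1},T_w]_{\mathscr{C}_{n+1}}$ are the distributive-lattice operations of Corollary~\ref{cor: distributive lattice}, and every interval of $\mathscr{E}_n$ has the form $[e,w]$ up to the obvious translation, this shows $p$ restricts to a lattice morphism on each interval of $\mathscr{E}_n$. I would also double-check the outer square of the diagram — that the inclusions of intervals are compatible with $p \colon \mathscr{E}_n \to \mathscr{C}_{n+1}$ — which is immediate from Theorem~\ref{thm: inverse stanley is order preserving}.

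The main obstacle I anticipate is the bookkeeping in the central identification $f^{-1}(I(v)) \leftrightarrow P_{T_v}$: one has to keep straight three different orders (the numerical order on labels, the $P_{T_w}$ order, and Edelman's order on $I(w)$), and verify that the recipe for rebuilding a pruned tree from an order ideal interacts correctly with the "least ancestor" definition of $f$. In particular the subtlety flagged in the Remark — that a connected component $\{i_1,\dots,i_k\}$ attaches to $\max_j i_j + 1$ in \emph{numerical} order, not in the poset order — is exactly the point where a careless argument would go wrong, so I would spend the bulk of the proof nailing down that compatibility, probably via a clean induction on $n$ using the splitting $x = (x_1; x_<, x_>)$ of Definition~\ref{def: inverse stanley}.
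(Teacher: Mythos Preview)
Your overall architecture matches the paper's: reduce to showing that, under the identifications of Theorem~\ref{thm: edelman} and Proposition~\ref{prop: JPT interval}, the restriction $p|_{[e,w]}$ coincides with $J(f)$, i.e.\ that $f^{-1}(I(v))$ is the order ideal of $P_{T_w}$ corresponding to $T_v$. Where you diverge is in how you propose to verify this identity.

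The paper does \emph{not} induct on $n$ via the splitting $x=(x_1;x_<,x_>)$. Instead it inducts on the number of inversions of $v$, i.e.\ along a saturated chain $e\lessdot\cdots\lessdot v$ in $[e,w]_{\mathscr{E}_n}$. At each covering step $\tau\lessdot\sigma$ one new inversion $(i,j)$ is introduced, and the argument splits into two cases: if $(i,j)\notin\operatorname{im}f$ one shows $T_\sigma=T_\tau$ (so $f^{-1}(I(\sigma))=f^{-1}(I(\tau))$ trivially); if $(i,j)=f(j)$ one shows the passage $T_\tau\to T_\sigma$ is exactly the right-arm rotation adding the vertex $j$ to the order ideal. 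This dynamic approach meshes naturally with the covering combinatorics already analysed in Theorem~\ref{thm: inverse stanley is order preserving}.

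Your proposed induction on $n$ runs into a genuine obstacle: the decomposition $w\mapsto(w_1;w_<,w_>)$ is not compatible with the relation $v\le_{\mathscr{E}_n} w$. If $v_1\neq w_1$ then $v_<,v_>$ and $w_<,w_>$ are permutations of \emph{different} subsets of $[n]$, so there is no inductive hypothesis to invoke for the pair $(v_<,w_<)$. The recursion in Definition~\ref{def: inverse stanley} builds a single tree; it does not respect intervals in $\mathscr{E}_n$.

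That said, your informal claim that ``$f(j)=(i,j)\in I(v)$ says exactly that $j$ is off the right arm of $T_v$'' is correct and can be turned into a direct (non-inductive) proof: one checks that $i$ is the least label exceeding $j$ among all $k$ with $(k,j)\in I(w)$, so $(i,j)$ is minimal in $I(w)$ among inversions with second coordinate $j$; since $I(v)$ is an order ideal, $(k,j)\in I(v)$ for some $k$ forces $(i,j)\in I(v)$, and the former condition is exactly ``$j$ is not a left-to-right maximum of $v$''. If you pursue this route you must still match up the in-order labels across $T_v$ and $T_w$ to see that ``$j$ off the right arm of $T_v$'' names the same parenthesis pair in both trees---the bookkeeping you flagged. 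The paper sidesteps this entirely by tracking one rotation at a time.
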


\begin{example}
	Figure~\ref{fig: birkhoffexample} depicts Theorem~\ref{thm: B-Pdual} on the interval $[e,4213]_{\mathscr{E}_n}$.
	\begin{figure}		\includegraphics[scale=.5]{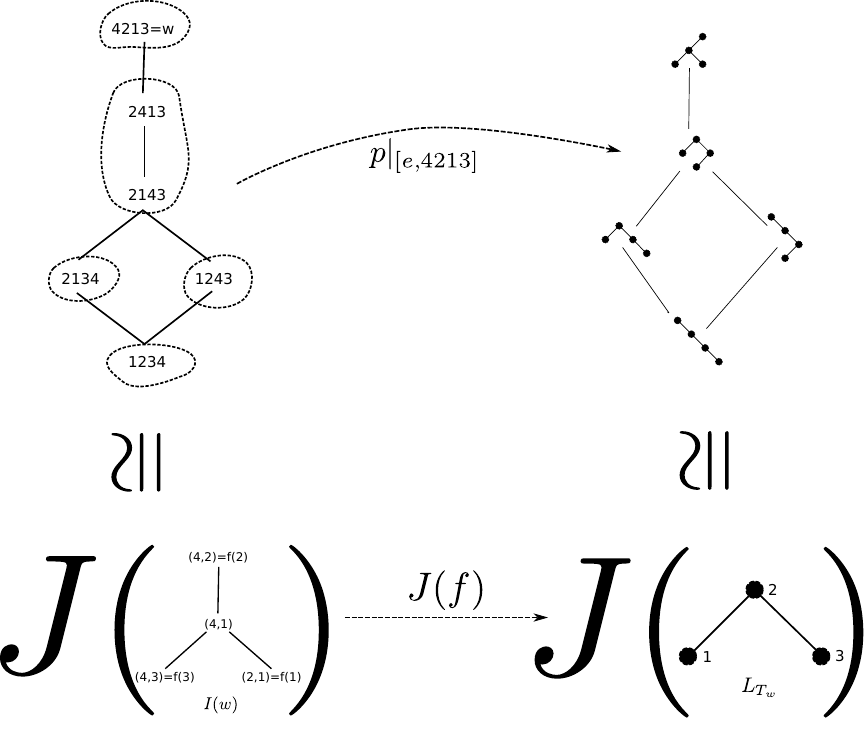}
		\caption{$p|_{[e,4213]}$ and $J(f)$ for the interval $[e,4213]_{\mathscr{E}_n}$.}\label{fig: birkhoffexample}
	\end{figure}
\end{example}

\begin{proof}
Begin by noting that, strictly speaking, the Birkhoff-Priestley dual to $f$, $J(f)$, is not a map from $[e,w]_{\mathscr{E}_n} \to J(P_{T_w})$ as $p$ is, but $J(f)\colon J(I(w))\to J(P_{T_w})$. However, from Theorem~\ref{thm: edelman}, $J(I(w))\simeq [e,w]_{\mathscr{E}_n} $, so one can use $p$ in place of such a $J(f)$.

Fix $w \in \mathfrak{S}_n$. From Theorem~\ref{thm: inverse stanley is order preserving} one has that $p \colon [e,w]_{\mathscr{E}_n} \to J(P_{T_w})$ is order-preserving. Then one must show that $p(I)$ is, in fact, $f^{-1}(I)$. 

Induct on the number of inversions in a permutation in $[e,w]_{\mathscr{E}_n}\simeq J(I(w))$. Note that the claim is trivially true for $(1, 2, \ldots, n)$, the identity permutation, which corresponds to $\varnothing \in J(I(w))$.

Now consider the permutation $\tau \in [e,w]_{\mathscr{E}_n} $, and $f^{-1}(I(\tau)) = T_\tau$. Suppose $\sigma$ covers $\tau$. Then, $\sigma$ has precisely one more inversion than $\tau$; call this inversion $(i, j)$, with $j<i$. 

Suppose there is an inversion $(\ell, j)$ in both $\tau$ and $\sigma$ with $\tau^{-1}(\ell) = \sigma^{-1}(\ell) < \sigma^{-1}(i)$. Then, $j$ is in the left subtree of $\ell$ in $T_\tau$ and $T_\sigma$, meaning that $j$ is not the parent of $i$ in $T_\tau$ and so adding the inversion $(i, j)$ does not change $T_\tau$, forcing $T_\tau = T_\sigma$, as desired. Thus, one may concentrate on the case where there is no such inversion $(\ell,j)$, so $j$ is a left-right maximum in $\tau$.

There are two cases:

\begin{enumerate}
\item If $(i, j)$ is not in the image of $f$, then $f^{-1}(I(\sigma)) = f^{-1}(I(\tau))$, and so one must show that $T_\sigma = T_\tau$. One knows $i$ and $j$ are adjacent in $\tau$, and $i$ is a left-right maximum. In particular, this means that neither $\tau$ nor $\sigma$ has an inversion $(k, i)$, meaning $i$ lies in the right arm of both $T_\tau$ and $T_\sigma$. Recalling that $j$ is a left-right maximum in $\tau$ one has 

\begin{center}

\begin{tikzpicture}
    \tikzstyle{every node}=[draw,circle,fill=white,minimum size=10pt,
                            inner sep=0pt]

    \draw (-4,1) node (02) [label=below:$B$]{};
    \draw (-3,0) node (03) [label=below:$C$]{};
    \draw (-1,0) node (04) [label=below:$D$]{};
    \draw (-4,3) node (07) [label=above:$A$]{};
    \node[draw=none, fill=white] at (-2, 2) {\Large $T_\tau$};
    
    \tikzstyle{every node}=[draw,circle,fill=black,minimum size=4pt,
                            inner sep=0pt]

    \draw (-3,2) node (05) [label=right:$j$]{};
    \draw (-2,1) node (06) [label=right:$i$]{};
    \draw (05) -- (02);
    \draw (05) -- (06);
    \draw (06) -- (03);
    \draw (06) -- (04);
    \draw (05) -- (07);

    \tikzstyle{every node}=[draw,circle,fill=white,minimum size=10pt,
                            inner sep=0pt]

    \draw (1,0) node (12) [label=below:$B$]{};
    \draw (3,0) node (13) [label=below:$C$]{};
    \draw (4,1) node (14) [label=below:$D$]{};
    \draw (2,3) node (17) [label=above:$A$]{};
    \node[draw=none, fill=white] at (4, 2) {\Large $T_\sigma$};
    
    \tikzstyle{every node}=[draw,circle,fill=black,minimum size=4pt,
                            inner sep=0pt]

    \draw (3,2) node (15) [label=right:$i$]{};
    \draw (2,1) node (16) [label=left:$j$]{};
    \draw (15) -- (14);
    \draw (15) -- (16);
    \draw (16) -- (12);
    \draw (16) -- (13);
    \draw (15) -- (17);
    
\end{tikzpicture}
\end{center}
Since $(i,j)$ is not in the image of $f$, one cannot have that $j$ is the left child of $i$ in $T_w$. However, $j$ is the left child of $i$ in $T_\sigma$ and $\sigma < w$, meaning $j$ must also be the left child of $i$ in $T_w$, a contradiction. Thus $(i,j)$ must be in the image of $f$.

\item In the second case, suppose $(i, j)$ is in the image of $f$. Then, the addition of the inversion of $(i, j)$ to $\tau$ results in $\sigma$, and in the following rotation from $T_\tau$ to $T_\sigma$.

\begin{center}

\begin{tikzpicture}
    \tikzstyle{every node}=[draw,circle,fill=white,minimum size=10pt,
                            inner sep=0pt]

    \draw (-4,1) node (02) [label=below:$B$]{};
    \draw (-3,0) node (03) [label=below:$C$]{};
    \draw (-1,0) node (04) [label=below:$D$]{};
    \draw (-4,3) node (07) [label=above:$A$]{};
    \node[draw=none, fill=white] at (-2, 2) {\Large $T_\tau$};
    
    \tikzstyle{every node}=[draw,circle,fill=black,minimum size=4pt,
                            inner sep=0pt]

    \draw (-3,2) node (05) [label=right:$j$]{};
    \draw (-2,1) node (06) [label=right:$i$]{};
    \draw (05) -- (02);
    \draw (05) -- (06);
    \draw (06) -- (03);
    \draw (06) -- (04);
    \draw (05) -- (07);

\path[draw,thick,black,->] (-1,2) -- (1,2);

    \tikzstyle{every node}=[draw,circle,fill=white,minimum size=10pt,
                            inner sep=0pt]

    \draw (1,0) node (12) [label=below:$B$]{};
    \draw (3,0) node (13) [label=below:$C$]{};
    \draw (4,1) node (14) [label=below:$D$]{};
    \draw (2,3) node (17) [label=above:$A$]{};
    \node[draw=none, fill=white] at (4, 2) {\Large $T_\sigma$};
    
    \tikzstyle{every node}=[draw,circle,fill=black,minimum size=4pt,
                            inner sep=0pt]

    \draw (3,2) node (15) [label=right:$i$]{};
    \draw (2,1) node (16) [label=left:$j$]{};
    \draw (15) -- (14);
    \draw (15) -- (16);
    \draw (16) -- (12);
    \draw (16) -- (13);
    \draw (15) -- (17);
    
\end{tikzpicture}

\end{center}
One needs to show that $f^{-1}(I(\sigma))$ is the order ideal in $P_{T_w}$ that corresponds to $T_\sigma$. Now, since $T_\tau$ and $T_\sigma$ differ only in this rotation, one need only show that

\begin{center}
  \begin{tikzpicture}
	\tikzstyle{every node}=[draw,circle,fill=white,minimum size=10pt,
                            inner sep=0pt]
		\draw (0,0) node (01) [label=below:$B$]{};
		\draw (2,0) node (02) [label=below:$C$]{};
	 \tikzstyle{every node}=[draw,circle,fill=black,minimum size=4pt,
                            inner sep=0pt]
		\draw (1,1) node (03) [label=right:$j$]{};

		\draw (01)--(03);
		\draw (02)--(03);

  \end{tikzpicture}
\end{center}
appears in $P_{T_w}$. Left-right maxima occur only on the right arm of the image of a permutation under the pruned tree map, and so subsequent inversions on the way from $\sigma$ to $w$ result in rotations in the pruned tree that cannot affect the children of $j$. Hence, the above subtree
%
appears in $P_{T_w}$, and so, $T_\sigma$ is the pruned tree associated to $I(\sigma)$.
\end{enumerate}
These two cases cover all possibilities, concluding the proof.
\end{proof}

\section{The ParseWords Function for the Comb Poset} \label{Section 7}

The number of common parsewords for any two trees having a common upper bound in $\mathscr{C}_n$ can be computed precisely. Recall that $w \in \PW(T)$ means that $T$ admits a labeling of its vertices by $0,1,2$ such that the leaves are labeled by the word $w$, the children of each vertex have distinct labels and no vertex has the same label as either of its children. Recent work by Cooper, Rowland and Zeilberger in~\cite{crz} led us to first consider the comb poset. They showed that a statement equivalent to the Four Color Theorem due to Kaufmann in~\cite{kauffman} is, in turn, equivalent to $\PW(T_1,T_2)\ne \emptyset$ for all $T_1,T_2 \in \mathbb{T}_n$ for any $n \in \mathbb{N}$.

\begin{example}
An example of a tree parsing the word $2202$ is shown in Figure~\ref{PW ex}.
\begin{figure}[htpb]
\begin{center}
\begin{tikzpicture}
    \tikzstyle{every node}=[draw,circle,fill=black,minimum size=4pt,
                            inner sep=0pt]

    \draw (-3,1.5) node (L12) [label=below:$2$]{};
    \draw (-1.5,3) node (V0) [label=left:$0$]{};
    \draw (0,4.5) node (R1) [label=above:$1$]{};
    \draw (1.5,3) node (L42) [label=below:$2$]{};
    \draw (0,1.5) node (V1) [label=right:$1$]{};
    \draw (-1.5,0) node (L22) [label=below:$2$]{};
    \draw (1.5,0) node (L30) [label=below:$0$]{};
    \draw[thick,blue] (R1) -- (L42);
    \draw[thick,blue] (R1) -- (V0);
    \draw[thick,blue] (V0) -- (L12);
    \draw[thick,blue] (V0) -- (V1);
    \draw[thick,blue] (V1) -- (L22);
    \draw[thick,blue] (V1) -- (L30);
    
\end{tikzpicture}
\end{center}
\caption{}
\label{PW ex}
\end{figure}
\end{example}

\begin{example}
An example of two trees parsing the same word $010$ is shown in Figure~\ref{CPW ex}.
\begin{figure}[htpb]
  \begin{center}

\begin{tikzpicture}

    \tikzstyle{every node}=[draw,circle,fill=black,minimum size=4pt,
                            inner sep=0pt]

    \draw (-4,1) node (02) [label=below:$0$]{};
    \draw (-3,0) node (03) [label=below:$1$]{};
    \draw (-1,0) node (04) [label=below:$0$]{};
    \draw (-3,2) node (05) [label=above:$1$]{};
    \draw (-2,1) node (06) [label=right:$2$]{};
    \node[draw=none, fill=white] at (-2, 2) {\Large $T_1$};
    
    \draw (05)[thick,blue] -- (02);
    \draw (05)[thick,blue] -- (06);
    \draw (06)[thick,blue] -- (03);
    \draw (06)[thick,blue] -- (04);

    \draw (1,0) node (12) [label=below:$0$]{};
    \draw (3,0) node (13) [label=below:$1$]{};
    \draw (4,1) node (14) [label=below:$0$]{};
    \draw (3,2) node (15) [label=above:$1$]{};
    \draw (2,1) node (16) [label=left:$2$]{};
    \node[draw=none, fill=white] at (2, 2) {\Large $T_2$};
    
    \draw (15)[thick,blue] -- (14);
    \draw (15)[thick,blue] -- (16);
    \draw (16)[thick,blue] -- (12);
    \draw (16)[thick,blue] -- (13);
    
    \path[draw,thick,red,<->] (-1,1) -- (1,1);

\end{tikzpicture}
\end{center}
\caption{}
\label{CPW ex}
\end{figure}
\end{example}

\begin{example}
  The common parsewords for the trees in Figure~\ref{CPW ex} are 101, 202, 010, 212, 020, 121.
\end{example}

To simplify notation, let $T_{\leq b}$ be the subtree of a tree $T$ having the vertex $b$ as its root. 

\begin{prop}[{Common root property, \cite[Proposition 2]{crz}}] \label{prop: common root}
  If two trees $T_1, T_2 \in \mathbb{T}_n$ parse the same word, then their roots receive the same label when the trees are labeled with a common parseword. Hence, if for $T_1, T_2 \in \mathbb{T}_n$, there are vertices $b_i$ in $T_1$ and $b_j$ in $T_2$ such that $T_{1_{\leq b_i}}$ and $T_{2_{\leq b_j}}$ have precisely the same leaves (i.e. both the dangling subtrees contain precisely the leaves $m_1$ through $m_2$, for some natural numbers $m_1 < m_2 \leq n$), then $b_i$ and $b_j$ receive the same label if one labels the trees with a common parse word.
\end{prop}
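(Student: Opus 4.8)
The plan is to linearise the ``unique remaining colour'' rule that governs parsings. Fix a bijection $\phi$ from the colour set $\{0,1,2\}$ onto the three non-identity elements of the Klein four-group $V=(\mathbb{Z}/2\mathbb{Z})^2$, and recall that in $V$ the three non-identity elements sum to $0$, so the sum of any two distinct non-identity elements is the third. I would first establish the following claim: if $L$ is \emph{any} valid parsing labelling of a binary tree $S$, then for every vertex $v$ of $S$ one has $\phi(L(v))=\sum_a \phi(L(a))$ in $V$, the sum ranging over the leaves $a$ of the subtree $S_{\leq v}$.

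I prove the claim by induction on the number of leaves of $S_{\leq v}$. If $v$ is a leaf the statement is trivial. If $v$ is internal with children $u$ and $w$, then the defining conditions of a parsing (the two children of a vertex get distinct labels, and a parent shares no label with a child) force $L(u)$, $L(w)$, $L(v)$ to be pairwise distinct, hence $\{L(u),L(w),L(v)\}=\{0,1,2\}$; applying $\phi$ and using that the three non-identity elements of $V$ sum to $0$ gives $\phi(L(v))=\phi(L(u))+\phi(L(w))$. By the inductive hypothesis applied at $u$ and at $w$, the right-hand side equals $\sum_a\phi(L(a))$ over the leaves below $u$ together with the leaves below $w$, which are precisely the leaves below $v$; here associativity and commutativity of $V$ are exactly what makes the sum insensitive to the shape of $S_{\leq v}$.

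Both assertions now follow at once. Suppose $T_1,T_2\in\mathbb{T}_n$ parse a common word $w=w_1\cdots w_n$, witnessed by valid labellings $L_1$ of $T_1$ and $L_2$ of $T_2$ whose leaf words are both $w$. Applying the claim at the two roots gives $\phi(L_1(\text{root of }T_1))=\sum_{k=1}^n\phi(w_k)=\phi(L_2(\text{root of }T_2))$, so the roots carry the same colour. For the ``hence'' part, observe that a subtree of a binary tree always spans a contiguous block of leaves, so $T_{1_{\leq b_i}}$ and $T_{2_{\leq b_j}}$ both have exactly the leaves $a_{m_1},\dots,a_{m_2}$. Restricting $L_1$ to $T_{1_{\leq b_i}}$ and $L_2$ to $T_{2_{\leq b_j}}$ yields valid parsings of those subtrees (deleting $b_i$'s parent edge, if any, only removes a constraint), each with leaf word $w_{m_1}\cdots w_{m_2}$; the claim applied at $b_i$ and at $b_j$ then gives $\phi(L_1(b_i))=\sum_{k=m_1}^{m_2}\phi(w_k)=\phi(L_2(b_j))$, as desired.

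There is no genuine difficulty here; the one point that matters is the choice of algebraic model. Working directly inside $\{0,1,2\}$ with the ``third colour'' map would require re-associating an operation that is not associative, so shape-independence would not be obvious; transporting the colours to the non-identity elements of the Klein four-group turns that map into group addition and makes shape-independence transparent. This is essentially the argument behind Proposition~2 of \cite{crz}.
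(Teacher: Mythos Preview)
The paper does not give its own proof of this proposition; it is simply quoted from \cite[Proposition~2]{crz}. So there is nothing to compare against directly. Your argument is correct and is essentially the classical one: the Klein four-group model is precisely Kauffman's device in \cite{kauffman}, and it is what underlies the result in \cite{crz} as well. The key observation---that the ``third colour'' rule becomes addition in $V=(\mathbb{Z}/2\mathbb{Z})^2$ once colours are identified with the non-identity elements, so that the label at any vertex is the $V$-sum of the leaf labels below it---is exactly right, and your inductive verification is clean. The deduction of the subtree statement from the root statement by restriction is also fine.
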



If a tree $T$ parses a word $w$ and $X$ is a subtree of $T$, let $w(X)$ be the label received by the root of $X$ parsing $w$ and let $w_X$ be the segment of $w$ parsed by the subtree $X$.

\begin{defn}\label{def: leaf reduction}
	Say $T \in \mathbb{T}_n$ has a \emph{leaf reduction} at $(i,i+1)$ for $i \in \{1,\ldots,n-1\}$ if the leaves $i,i+1$ have a common parent:
			\begin{center}
			 \begin{tikzpicture}
	\tikzstyle{every node}=[draw,circle,fill=white,minimum size=20pt,
                            inner sep=0pt]
		\draw (-1.5,1) node (03) {$\hat{T}$};
	 \tikzstyle{every node}=[draw,circle,fill=black,minimum size=4pt,
                            inner sep=0pt]
		\draw (-2.25,0) node (02) [label=left:$\ell_i$]{};
		\draw (-0.75,0) node (01) [label=right:$\ell_{i + 1}$]{};

		\draw (01)--(03);
		\draw (02)--(03);

  \end{tikzpicture}
  \end{center}	
	Define $\hat{T}$ as in the above diagram, i.e.\ remove $\ell_i$ and $\ell_{i+1}$ from $T$. For $i \in \{1,\ldots, n-1\}$, define two maps $f^<_{i},f^>_i\colon \PW(\hat{T})\to \PW(T)$ sending $\hat{w}$ to the $w$ in $\PW(T)$ that uniquely extends $\hat{w}$ in such a way that $w_i<w_{i+1}$ or $w_i>w_{i+1}$, respectively.
\end{defn}

\begin{prop}\label{prop: leaf collapse}
If $\{T_1\ldots,T_m\}\subset \mathbb{T}_n$ share a leaf reduction at $(i,i+1)$, then
\[
\PW(T_1,\ldots,T_m)=f_i^<(\PW(\hat{T}_1,\ldots,\hat{T}_m))\sqcup f_i^>(\PW(\hat{T}_1,\ldots,\hat{T}_m)).	
\]
\end{prop}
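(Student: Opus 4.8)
The plan is to establish the set equality by proving the two inclusions separately and then checking disjointness. The one structural input that makes everything go through is that, at the reduction site, the relevant colors are forced and, moreover, forced in the same way for all of $T_1,\ldots,T_m$ simultaneously. In the generic case $1\le i\le n-1$, each $T_j$ has $\ell_i$ and $\ell_{i+1}$ as the two children of a single vertex $p_j$; if $w$ is parsed by $T_j$, then necessarily $w_i\ne w_{i+1}$ and $p_j$ receives the third color $c:=\{0,1,2\}\setminus\{w_i,w_{i+1}\}$, a color that does not depend on $j$. This is precisely the common-root property of Proposition~\ref{prop: common root}, applied to the subtrees $T_{j,\le p_j}$, all of which have leaf set $\{i,i+1\}$. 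In the boundary cases $i=0$ and $i=n$ the analogous statement holds with $p_j$ played by the root of $T_j$ and with the color $w(\hat{T}_j)$ of the root of $\hat{T}_j$ in the role of one of $w_i,w_{i+1}$; this color too is $j$-independent by the common-root property, since all the $\hat{T}_j$ share the relevant leaf set.

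For the inclusion $\PW(T_1,\ldots,T_m)\subseteq f_i^<(\PW(\hat{T}_1,\ldots,\hat{T}_m))\sqcup f_i^>(\PW(\hat{T}_1,\ldots,\hat{T}_m))$, I would take $w$ parsed by every $T_j$, let $c$ be the forced color from the previous step, and form $\hat{w}$ by deleting the two entries $w_i,w_{i+1}$ of $w$ and inserting $c$ in their place. Restricting any proper $3$-coloring of $T_j$ that witnesses $w$ to the vertex set of $\hat{T}_j$ (which is the vertex set of $T_j$ with $\ell_i,\ell_{i+1}$ deleted) is again proper: the only incidence to recheck is that the now-leaf $p_j$ carries $c=\hat{w}_i$, while its parent, unchanged from $T_j$, already avoided $c$. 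Hence $\hat{w}\in\PW(\hat{T}_j)$ for every $j$, i.e.\ $\hat{w}\in\PW(\hat{T}_1,\ldots,\hat{T}_m)$. Since $w_i\ne w_{i+1}$, exactly one of $w_i<w_{i+1}$ and $w_i>w_{i+1}$ holds, and unwinding Definition~\ref{def: leaf reduction} shows that $w$ equals $f_i^<(\hat{w})$ in the first case and $f_i^>(\hat{w})$ in the second.

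For the reverse inclusion, given $\hat{w}\in\PW(\hat{T}_1,\ldots,\hat{T}_m)$, I would note that $f_i^<(\hat{w})$ is computed from $\hat{w}$ and $i$ alone --- replace the $i$-th entry $\hat{w}_i$ by the two elements of $\{0,1,2\}\setminus\{\hat{w}_i\}$ in increasing order --- so the resulting word $w$ is the same regardless of which $T_j$ the map $f_i^<$ is taken relative to. For each $j$, extending a witnessing coloring of $\hat{T}_j$ by coloring the two reinstated leaves $\ell_i,\ell_{i+1}$ with the two colors different from $\hat{w}_i$ is proper because $p_j$ already carries $\hat{w}_i$; thus $w\in\PW(T_j)$ for all $j$, hence $w\in\PW(T_1,\ldots,T_m)$, and the same applies to $f_i^>$. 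Disjointness of the union is immediate: every word in the $f_i^<$-image has $i$-th entry strictly less than its $(i+1)$-st, and every word in the $f_i^>$-image has the reverse strict inequality (with the obvious substitution in the boundary cases, comparing against the root color). The boundary cases $i=0,n$ are otherwise identical, reading off the relevant comparison from Definition~\ref{def: leaf reduction}. The step I expect to require the most care is the very first one: verifying that the per-tree maps $f_i^<,f_i^>$ of Definition~\ref{def: leaf reduction} glue consistently over $T_1,\ldots,T_m$, which comes down to the $j$-independence of the forced color at the reduction site --- exactly the content of Proposition~\ref{prop: common root}. Everything after that is bookkeeping with the definition of a parseword.
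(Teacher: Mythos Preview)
Your argument is correct. The paper states this proposition without proof, treating it as immediate from Definition~\ref{def: leaf reduction} and Proposition~\ref{prop: common root}; your write-up supplies exactly the bookkeeping the paper omits, and the key observation you flag---that the forced color at the reduction site (and hence the maps $f_i^<,f_i^>$) is $j$-independent by the common-root property---is indeed the only point requiring care.
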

Proposition~\ref{prop: leaf collapse} is most frequently used several times in succession, to ``collapse'' a subtree common to two or move trees. In particular, it often allows a reduction to the case $T_1\wedge T_2\meet \cdots \meet T_m=\RCT{n}$.

\begin{cor} \label{prop: constant PW(T)}
	For $T \in \mathbb{T}_n$, one has $|\PW(T)|=3\cdot 2^{n-1}$.
\end{cor}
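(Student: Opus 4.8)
The plan is to induct on $n$, using Proposition~\ref{prop: leaf collapse} with a single tree ($m=1$) as the engine of the inductive step. For the base case $n=1$, the unique tree in $\mathbb{T}_1$ is a single vertex, and a proper $3$-coloring is just a choice of color for that vertex, so $|\PW(T)| = 3 = 3\cdot 2^{0}$.

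For the inductive step I would take $n\geq 2$ and assume the formula for $n-1$. Every $T\in\mathbb{T}_n$ contains a \emph{cherry}: picking an internal vertex of maximal depth, both of its children are forced to be leaves, say $\ell_i$ and $\ell_{i+1}$. Thus $T$ has a leaf reduction at $(i,i+1)$ (case (i) of Definition~\ref{def: leaf reduction}), with $\hat T \in \mathbb{T}_{n-1}$. Proposition~\ref{prop: leaf collapse} then gives
\[
\PW(T) = f_i^<\bigl(\PW(\hat T)\bigr) \sqcup f_i^>\bigl(\PW(\hat T)\bigr).
\]
The maps $f_i^<$ and $f_i^>$ are injective, since an element of either image determines $\hat w$ by deleting its coordinates at positions $i,i+1$; hence $|\PW(T)| = 2\,|\PW(\hat T)| = 2\cdot 3\cdot 2^{n-2} = 3\cdot 2^{n-1}$ by the inductive hypothesis, closing the induction.

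I do not anticipate a genuine obstacle: all the substance is contained in Proposition~\ref{prop: leaf collapse}. The only small points to verify beyond it are that a full binary tree on $\geq 2$ leaves always has a cherry (immediate) and that the two extension maps are injective with disjoint images — and the disjointness is already encoded in the $\sqcup$ of Proposition~\ref{prop: leaf collapse}, reflecting that sibling leaves always get distinct labels, so exactly one of $w_i<w_{i+1}$, $w_i>w_{i+1}$ holds. If one wanted to avoid even citing the cherry fact, the same argument works using leaf reductions of type (ii) or (iii) at the leaf $\ell_1$ or $\ell_n$ on the right arm, which exist for every $T$ with $n\geq 2$.
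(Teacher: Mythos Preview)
Your proof is correct and is exactly the argument the paper has in mind: induct on $n$ and apply Proposition~\ref{prop: leaf collapse} with $m=1$ at a cherry. One small correction to your closing remark: leaf reductions of type (ii) or (iii) do \emph{not} exist for every $T$ with $n\ge 2$ (e.g.\ $T=((a_1a_2)(a_3a_4))$ has neither $\ell_1$ nor $\ell_n$ attached to the root), so the cherry argument really is needed.
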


\begin{proof}
	The case $n=1$ is trivial. One can then induct on $n$, applying Proposition~\ref{prop: leaf collapse} to $T$ alone.
\end{proof}

\begin{prop}\label{prop: cover parse word}\label{unwritten}
	For any $T_1,T_2 \in \mathbb{T}_n$ differing by a single rotation (not necessarily a right arm rotation), 
	\begin{align*}
	  \PW(T_1,T_2)&=\{w \in \PW(T_1) \colon w(X) \ne w(Y)\} \\
	  &=\{w \in \PW(T_2) \colon w(Y) \ne w(Z)\} \\
	  &=\{w \in \PW(T_1) \colon w(X)=w(Z)\},
      \end{align*}
	where $X,Y,Z$ are subtrees as indicated below. 
	Furthermore, $|\PW(T_1,T_2)|=3\cdot 2^{n-2}$.
      \end{prop}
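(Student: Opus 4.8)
The plan is to work locally at the rotation, extract the single colour condition it imposes on a common parseword, and then get the cardinality by collapsing everything away with Proposition~\ref{prop: leaf collapse}. According to the figure there is a vertex $p$ of $T_1$ with left subtree $X$ whose right child $q$ has left subtree $Y$ and right subtree $Z$ (so $p$ governs the parenthesization $(X(YZ))$); in $T_2$ this block is replaced by $((XY)Z)$, rooted at a vertex $q'$ with left child $p'$ (whose subtrees are $X$ and $Y$) and right subtree $Z$; outside this block $T_1$ and $T_2$ are identical. Since $X$, $Y$, $Z$ occur as genuine subtrees of both trees and a proper $3$-labeling of a fixed binary tree is uniquely determined by its leaf word, for any word $w$ parsed by $T_1$ or by $T_2$ the colours $w(X)$, $w(Y)$, $w(Z)$ are well defined and do not depend on which tree we read them off. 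The first observation is that if $w\in\PW(T_1)$ then, as $Y$ and $Z$ are the children of $q$, one has $w(Y)\ne w(Z)$, so $w(q)$ is forced to be the third colour; and since $w(X)\ne w(q)$ (siblings under $p$) it follows that $w(X)\in\{w(Y),w(Z)\}$. Symmetrically, a parseword of $T_2$ must satisfy $w(X)\ne w(Y)$.

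Next I would prove the characterization. Take $w\in\PW(T_1)$ and a valid labeling $L$ of $T_1$ realizing it; its restriction to the block $(X(YZ))$ and to ``$T_1$ minus the block'' are both valid, meeting at the block's root with colour $L(p)$. Since $T_2$ agrees with $T_1$ outside the block, $w\in\PW(T_2)$ amounts to finding a valid labeling of $((XY)Z)$ extending $w$ on the block's leaves whose root also gets colour $L(p)$. Such a labeling of $((XY)Z)$ exists iff $w(X)\ne w(Y)$, and it is then unique; and when it exists, $(X(YZ))$ and $((XY)Z)$ parse the same leaf word, so by the common root property (Proposition~\ref{prop: common root}, applied to these two trees, which have the same leaf set) their roots receive the same colour, forcing the root of $((XY)Z)$ to equal $L(p)$ as required, so the labeling of $T_2$ glues. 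Hence $w\in\PW(T_1,T_2)$ iff $w\in\PW(T_1)$ and $w(X)\ne w(Y)$; combined with $w(X)\in\{w(Y),w(Z)\}$ and $w(Y)\ne w(Z)$ from the first step, this is also equivalent to $w\in\PW(T_1)$ and $w(X)=w(Z)$. These are exactly the descriptions displayed, after matching $X,Y,Z$ to the figure (on $\PW(T_1)$ one of the two inequalities holds automatically).

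For the cardinality I would apply Proposition~\ref{prop: leaf collapse} repeatedly. Using leaf reductions internal to $X$, then to $Y$, then to $Z$ — each shared by $T_1$ and $T_2$ since these subtrees are common — shrink $X$, $Y$, $Z$ to single leaves, so the block becomes three consecutive leaves, each reduction multiplying $|\PW(T_1,T_2)|$ by $2$. Then, as long as more than these three leaves remain, $T_1$ has either a pair of sibling leaves or a leaf attached at the root that avoids the $3$-leaf block: if the only sibling pair were the block's own cherry, then $T_1$ would be a caterpillar ending in that cherry, whose opposite end-leaf is attached at the root and, because $n>3$, does not lie in the block. Peel that leaf off via Proposition~\ref{prop: leaf collapse}, the reduction being shared by $T_2$ as well. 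After $n-3$ reductions in total we are left with the two trees of $\mathbb{T}_3$, for which $|\PW(T_1,T_2)|=6$ by a direct check (this is the example at the start of the section). Therefore $|\PW(T_1,T_2)|=6\cdot 2^{n-3}=3\cdot 2^{n-2}$.

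I expect the genuinely delicate points to be two small ones: in the middle step, verifying that the labeling of $T_2$ really glues — that when $((XY)Z)$ parses the block its root is forced to the \emph{same} colour the outside tree demands, which is where the common root property is used; and in the last step, the bookkeeping that at every stage a shared leaf reduction not touching the block is available. Everything else — the colour count at a single internal vertex, and the $2^{n-3}$ factor — is routine.
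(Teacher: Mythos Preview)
Your argument is correct and follows the same overall strategy as the paper: verify the colour condition locally at the rotation, then obtain the count by repeated applications of Proposition~\ref{prop: leaf collapse} until one reaches the base case $n=3$. Where the paper simply says the characterization ``can be checked by inspection'' and, for the induction when $X,Y,Z$ are all single leaves, gestures at treating ``the subtrees taken together as a single subtree,'' you have supplied the missing details: the use of Proposition~\ref{prop: common root} to ensure the labeling of $((XY)Z)$ glues to the exterior with the correct root colour, and the caterpillar observation guaranteeing a shared leaf reduction outside the three-leaf block when $n>3$. So your proof is essentially the paper's, just made rigorous at the two spots where the paper is terse.
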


\begin{proof}
The conditions on $w$ in the first part of the claim can be checked by inspection. For the second part, the case $n=3$ can be checked directly. Taking this as a base case, one can induct on $n$. A rotation looks like
\begin{center}

\begin{tikzpicture}
    \tikzstyle{every node}=[draw,circle,fill=white,minimum size=10pt,
                            inner sep=0pt]

    \draw (-4,1) node (02) [label=below:$X$]{};
    \draw (-3,0) node (03) [label=below:$Y$]{};
    \draw (-1,0) node (04) [label=below:$Z$]{};
    \draw (-4,3) node (07) [label=above:$S$]{};
    \node[draw=none, fill=white] at (-2, 2) {\Large $T_1$};
    
    \tikzstyle{every node}=[draw,circle,fill=black,minimum size=4pt,
                            inner sep=0pt]

    \draw (-3,2) node (05) {};
    \draw (-2,1) node (06) {};
    \draw (05) -- (02);
    \draw[red,thick] (05) -- (06);
    \draw (06) -- (03);
    \draw (06) -- (04);
    \draw (05) -- (07);

    \tikzstyle{every node}=[draw,circle,fill=white,minimum size=10pt,
                            inner sep=0pt]

    \draw (1,0) node (12) [label=below:$X$]{};
    \draw (3,0) node (13) [label=below:$Y$]{};
    \draw (4,1) node (14) [label=below:$Z$]{};
    \draw (2,3) node (17) [label=above:$S$]{};
    \node[draw=none, fill=white] at (4, 2) {\Large $T_2$};
    
    \tikzstyle{every node}=[draw,circle,fill=black,minimum size=4pt,
                            inner sep=0pt]

    \draw (3,2) node (15) {};
    \draw (2,1) node (16) {};
    \draw (15) -- (14);
    \draw[red,thick] (15) -- (16);
    \draw (16) -- (12);
    \draw (16) -- (13);
    \draw (15) -- (17);
    
    \path[draw,thick,->] (-1,1.5) -- (1,1.5);

\end{tikzpicture}
\end{center}
Applying Proposition~\ref{prop: leaf collapse} to any of the subtrees $X,Y,Z$ not consisting of a single leaf, or to the subtrees taken together as a single subtree if all three are leaves, allows one to invoke the result for a smaller $n$ and obtain the desired result.
\end{proof}

\begin{thm}\label{thm: PWchain}
	Suppose $T_1 < T< T_2$ in $\mathscr{C}_n$. Then $\PW(T_1,T_2)=\PW(T_1,T_2,T)$.
\end{thm}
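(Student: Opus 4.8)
The plan is to prove the nontrivial inclusion $\PW(T_1,T_2)\subseteq\PW(T)$; since $\PW(T_1,T_2,T)=\PW(T_1,T_2)\cap\PW(T)$ is automatically contained in $\PW(T_1,T_2)$, the asserted equality is equivalent to this inclusion. Throughout I will use the elementary fact that, for a fixed tree $S$ and a fixed word $w$, a parsing is unique when it exists: coloring the leaves by $w$ forces each internal vertex to be the unique color different from its two children's colors, and this is a legal parsing exactly when the forcing never assigns equal colors to two siblings. Write $\phi_S$ for this forced coloring whenever $w\in\PW(S)$.

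First I would reduce to the case where $T$ \emph{covers} $T_1$. Fix $T_1<T\le T_2$ and induct on $\rk(T)-\rk(T_1)$. If this gap exceeds $1$, pick a cover $T_1\lessdot R$ with $R\le T$ (the first step of a saturated chain from $T_1$ to $T$, which exists since $\mathscr{C}_n$ is ranked). Granting the cover case applied to $(T_1,R,T_2)$ gives $\PW(T_1,T_2)\subseteq\PW(R)$, hence $\PW(T_1,T_2)\subseteq\PW(R)\cap\PW(T_2)=\PW(R,T_2)$; since $R<T\le T_2$ has a strictly smaller rank gap, the inductive hypothesis yields $\PW(R,T_2)\subseteq\PW(T)$, as wanted. (An alternative route via collapsing common cherries with Proposition~\ref{prop: leaf collapse} seems to get stuck on the base case $T_1=\RCT{m}$, so I prefer this reduction, which is tailored to Proposition~\ref{prop: cover parse word}.)

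It remains to treat $T_1\lessdot T\le T_2$. By Corollary~\ref{cor: cover in Tn} and Remark~\ref{rmk: pallodefs}, $T$ arises from $T_1$ by a right-arm rotation that merges two consecutive elements $E_j,E_{j+1}$ of $RP_{T_1}$ (with $E_{j+1}\ne a_n$) into one new parenthesis pair $J=(E_jE_{j+1})$, and $J\in RP_{T_2}$ because $T\le T_2$. Locally $T_1$ has the shape $(X(YZ))$ and $T$ the shape $((XY)Z)$, with $X=E_j$, $Y=E_{j+1}$, and $Z$ the subtree of $T_1$ rooted at the next right-arm vertex (possibly just the leaf $a_n$). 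By Proposition~\ref{prop: cover parse word}, for $w\in\PW(T_1)$ we have $w\in\PW(T_1,T)$ iff $\phi_{T_1}$ assigns the roots of $X$ and $Z$ the same color; I must establish this for every $w\in\PW(T_1,T_2)$. Now $E_j$ and $E_{j+1}$ are subtrees of $T_1$ and, since $RP_{T_1}\subseteq RP_{T_2}$, also of $T_2$, with the same leaf sets in both; so the common root property (Proposition~\ref{prop: common root}) gives $\phi_{T_1}(\mathrm{root}\,E_j)=\phi_{T_2}(\mathrm{root}\,E_j)=:A$ and $\phi_{T_1}(\mathrm{root}\,E_{j+1})=:B$, and because $E_j,E_{j+1}$ are the two children of the vertex of $T_2$ whose subtree is $J$, legality there forces $A\ne B$. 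In $T_1$, let $C$ be the color of $\mathrm{root}\,Z$; the two forcing steps along the right arm above $Z$ say that the vertex with children $\mathrm{root}\,E_{j+1}$ and $\mathrm{root}\,Z$ gets the color $D:=\overline{\{B,C\}}$, and then legality at the vertex above it forces $A\ne D$. Hence $A\in\{B,C\}$, and $A\ne B$ forces $A=C$, which is exactly the condition needed. Thus $w\in\PW(T_1,T)\subseteq\PW(T)$.

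The main obstacle is the cover case: correctly identifying that a $\mathscr{C}_n$-cover really is the rotation $(X(YZ))\to((XY)Z)$ with $X,Y$ the merged elements of $RP_{T_1}$ and $Z$ the remainder of the right arm, together with the little three-coloring deduction "$A\ne B$ and $A\ne\overline{\{B,C\}}$ imply $A=C$." The remaining parts — the inductive reduction, the common-root identifications, and the degenerate sub-cases (merge at the root, or $Z$ equal to the single leaf $a_n$, which the argument covers verbatim) — are routine.
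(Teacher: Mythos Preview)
Your proof is correct and follows essentially the same approach as the paper's: reduce to the cover case $T_1\lessdot T$ by induction on the rank gap (the paper strips off the top cover $T_{k}'\lessdot T$ while you strip off the bottom cover $T_1\lessdot R$, but these are interchangeable), then use the fact that the merged pair $J=(E_jE_{j+1})$ lies in $RP_{T_2}$ to force $w(E_j)\ne w(E_{j+1})$ and invoke Proposition~\ref{prop: cover parse word}. Your coloring deduction ``$A\ne B$ and $A\ne\overline{\{B,C\}}$ imply $A=C$'' is sound but redundant: having established $A\ne B$, you could cite the $w(X)\ne w(Y)$ characterization of Proposition~\ref{prop: cover parse word} directly (as the paper does) rather than passing through $w(X)=w(Z)$. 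The one step you gloss over---that in $T_2$ the vertex corresponding to $J$ has children whose leaf sets are exactly those of $E_j$ and $E_{j+1}$---is handled by the remark following Proposition~\ref{prop: characterize RPTs} applied to $RP_T\subseteq RP_{T_2}$; the paper is equally brief here.
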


\begin{proof}
	It suffices to prove the statement for $T_1 \lessdot T < T_2$ in $\mathscr{C}_n$. Assume the theorem holds in this case and obtain the general case by induction on the length of a chain between $T_1$ and $T$. 
	
	Suppose one has $T_1 \lessdot T_1'\lessdot T_2'\lessdot \cdots \lessdot T_k'\lessdot T <T_2$. Then, by induction, $\PW(T_1,T_k',T_2)=\PW(T_1,T_2)$. Suppose $w \in \PW(T_1,T_2)=\PW(T_1,T_k',T_2)$. Furthermore, $w \in \PW(T_k',T_2)=\PW(T_k',T,T_2)$, so $w \in \PW(T_1,T,T_2)$ as desired. By definition, $\PW(T_1,T,T_2)\subset \PW(T_1,T_2)$, so $\PW(T_1,T,T_2)=\PW(T_1,T_2)$, as desired.

	To prove the initial case, now suppose $T_1 \lessdot T<T_2$. One has a sequence of right-arm rotations
	\begin{center}
	\begin{tikzpicture}
\tikzstyle{every node}=[draw,circle,fill=black,minimum size=4pt,
                            inner sep=0pt]

    \draw (-2.5,0.5) node (06) {};
    \draw (-3,1) node (04) {};
    
    \tikzstyle{every node}=[draw,circle,fill=white,minimum size=10pt,
                            inner sep=0pt]

    \draw (-3.5,1.5) node (02) [label=above:$S$]{};
    \draw (-3.5,0.5) node (03) [label=below:$X$]{};
    \draw (-3,0) node (05) [label=below:$Y$]{};
    \draw (-2,0) node (08) [label=below:$Z$]{};
    
    \draw[blue] (03) -- (04);
    \draw[blue] (05) -- (06);
    \draw[blue] (02) -- (04);
    \draw[blue] (04) -- (06);
    \draw[blue] (06) -- (08);
    
    \path[draw,thick,->] (-2,1) -- (-1,1);
    
    \tikzstyle{every node}=[draw,circle,fill=black,minimum size=4pt,
                            inner sep=0pt]
    \draw (0.25,0.5) node (25) {};
    \draw (0.75,1) node (26) {};
    
    \tikzstyle{every node}=[draw,circle,fill=white,minimum size=10pt,
                            inner sep=0pt]
\draw (0.25,1.5) node (24) [label=above:$S$]{};
\draw (-0.25,0) node (27) [label=below:$X$]{};
    \draw (1.25,0.5) node (28) [label=below:$Z$]{};
    \draw (0.75,0) node (29) [label=below:$Y$]{};
    
    \draw[blue] (25) -- (26);
    \draw[blue] (25) -- (27);
    \draw[blue] (24) -- (26);
    \draw[blue] (26) -- (28);
    \draw[blue] (25) -- (29);
    
    \path[draw,thick,->] (1.5,1) -- (2.5,1);
    
    \tikzstyle{every node}=[draw=none,circle,fill=none,minimum size=0pt,
                            inner sep=0pt]

    \draw (-3,-1) node (T1) {\Large$T_1$};
    \draw (1,-1) node (T2) {\Large$T$};

    \end{tikzpicture}
    \begin{tikzpicture}
    \tikzstyle{every node}=[draw=none,circle,fill=none,minimum size=0pt,
                            inner sep=0pt]
    \draw (1.25,1) node (28) {\Huge$\cdots$};
    \draw (1.25,-0.5) node (29) {};
    \draw (0.3,-0.5) node (30) {};
    \tikzstyle{every node}=[draw=none,circle,fill=none,minimum size=0pt,
                            inner sep=0pt]

    \draw (0,-1.3) node (T1) {};
    
    \end{tikzpicture}
    \begin{tikzpicture}
    \path[draw,thick,->] (1.5,1) -- (2.5,1);
    \draw (1.25,-0.5) node (29) {};
    \tikzstyle{every node}=[draw=none,circle,fill=none,minimum size=0pt,
                            inner sep=0pt]

    \draw (1,-1.3) node (T1) {};
    
    \end{tikzpicture}
    \begin{tikzpicture}
    \tikzstyle{every node}=[draw,circle,fill=black,minimum size=4pt,
                            inner sep=0pt]
    \draw (3.5,1) node (34) {};
    
    \tikzstyle{every node}=[draw,circle,fill=white,minimum size=10pt,
                            inner sep=0pt]
                            
    \draw (4,0.5) node (36) [label=below:$Y$]{};
    \draw (4,1.5) node (37) [label=above:$S$]{};
    \draw (3,0.5) node (33) [label=below:$X$]{};
                            
    \draw[blue] (34) -- (37);
    \draw[blue] (33) -- (34);
    \draw[blue] (34) -- (36);
    \tikzstyle{every node}=[draw=none,circle,fill=none,minimum size=0pt,
                            inner sep=0pt]

    \draw (3,-1) node (T1) {\Large$T_2$};

\end{tikzpicture}	
	\end{center}
Since the rotation between $T_1$ and $T$ moves the subtrees labeled by $X$ and $Y$ off the right arm, they must remain in the same position relative to one another in $T_2$. 

Suppose $w \in \PW(T_1,T_2)$. Since $T_2$ parses $w$, one must have $w(X) \ne w(Y)$ and, hence, by Proposition~\ref{unwritten}, $T$ parses $w$. Thus $\PW(T_1,T_2) \subset \PW(T_1,T,T_2)$. By definition, $\PW(T_1,T,T_2)\subset \PW(T_1,T_2)$, so $\PW(T_1,T_2)=\PW(T_1,T,T_2)$, as desired.
\end{proof}

\begin{thm} \label{thm: comparable PW}
	Suppose $T<T'$ in $\mathscr{C}_n$ and $\rk(T')-\rk(T)=k$. Then $|\PW(T,T')|=3\cdot 2^{n-1-k}$.
\end{thm}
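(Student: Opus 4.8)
The plan is to induct on $k=\rk(T')-\rk(T)$, with a secondary induction on $n$. The base case $k=1$ says $T\lessdot T'$ is a single right-arm rotation, and then $|\PW(T,T')|=3\cdot 2^{n-2}$ is exactly Proposition~\ref{prop: cover parse word}. So suppose $k\geq 2$. I would fix a cover $S$ of $T$ with $T\lessdot S\leq T'$; since $k\geq 2$ this forces $S<T'$ and $\rk(T')-\rk(S)=k-1$. By Theorem~\ref{thm: PWchain}, $\PW(T,T')=\PW(T,S,T')=\PW(T,S)\cap\PW(S,T')$. By Corollary~\ref{cor: cover in Tn} the rotation $T\lessdot S$ wraps two consecutive elements $X,Y$ of $RP_T$ into a new pair $(XY)$, so that locally $T$ has the shape $(X(YZ))$ and $S$ the shape $((XY)Z)$, where $Z$ is the right-arm tail directly to the right of $(XY)$ in $S$; by Proposition~\ref{prop: cover parse word}, a word $w\in\PW(S)$ lies in $\PW(T,S)$ exactly when $w(X)=w(Z)$. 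The argument then splits on whether $X$ and $Y$ are both single leaves.

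\textbf{Case 1: at least one of $X,Y$ has more than one leaf.} Since $RP_T$, $RP_S$ and $RP_{T'}$ are order ideals of $P_{T'}$ (Proposition~\ref{prop: JPT interval}) and $X$ (say) is a parenthesis pair common to all three, $X$ is a subtree with the same internal shape in $T$, $S$ and $T'$. Every full binary tree on at least two leaves contains a cherry, so $X$ contains two leaves $\ell_i,\ell_{i+1}$ sharing a parent, and this is a cherry of $T$, $S$ and $T'$ at once. Collapsing it via Proposition~\ref{prop: leaf collapse} applied to $\{T,S,T'\}$ gives $\PW(T,S,T')=f^<_i(\PW(\hat T,\hat S,\hat T'))\sqcup f^>_i(\PW(\hat T,\hat S,\hat T'))$. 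The trees $\hat T\lessdot\hat S\leq\hat T'$ lie in $\mathscr{C}_{n-1}$, and collapsing this internal cherry lowers every rank by exactly one, so $\rk(\hat T')-\rk(\hat T)=k$ and $\hat S<\hat T'$. Using Theorem~\ref{thm: PWchain} for both triples gives $\PW(\hat T,\hat S,\hat T')=\PW(\hat T,\hat T')$, so by the secondary inductive hypothesis $|\PW(T,T')|=2\,|\PW(\hat T,\hat T')|=2\cdot 3\cdot 2^{(n-1)-1-k}=3\cdot 2^{n-1-k}$.

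\textbf{Case 2: $X=a_c$ and $Y=a_{c+1}$ are single leaves.} Then $(XY)=(a_ca_{c+1})$ is a cherry of $S$, hence of $T'$ as well since $(a_ca_{c+1})\in RP_S\subseteq RP_{T'}$; so $\{S,T'\}$ share a leaf reduction at $(c,c+1)$ (though $T$ does not), and Proposition~\ref{prop: leaf collapse} gives $\PW(S,T')=f^<_c(\PW(\hat S,\hat T'))\sqcup f^>_c(\PW(\hat S,\hat T'))$. The key observation is that in $S$ the pair $(a_ca_{c+1})$ and the root of $Z$ are siblings, so after collapsing, the merged leaf $\hat a_c$ and the root of the (unchanged) subtree $\hat Z$ are siblings in $\hat S$; hence $\hat w(\hat a_c)\neq\hat w(\hat Z)$ for every $\hat w\in\PW(\hat S,\hat T')$, while over its two lifts $f^<_c(\hat w),f^>_c(\hat w)$ the value $w_c=w(X)$ takes each of the two colours different from $\hat w(\hat a_c)$ exactly once. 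Since $w\in\PW(S,T')$ lies in $\PW(T,S)$ iff $w(X)=w(Z)=\hat w(\hat Z)$, and $\hat w(\hat Z)$ is one of those two colours, exactly one lift of each $\hat w$ qualifies; thus $w\mapsto\hat w$ is a bijection $\PW(T,T')\to\PW(\hat S,\hat T')$. As $\hat S<\hat T'$ in $\mathscr{C}_{n-1}$ with $\rk(\hat T')-\rk(\hat S)=k-1$, the primary inductive hypothesis yields $|\PW(T,T')|=|\PW(\hat S,\hat T')|=3\cdot 2^{(n-1)-1-(k-1)}=3\cdot 2^{n-1-k}$.

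The main obstacle is Case 2 — equivalently the situation (for instance $T=\RCT{n}$) in which $T$ and $T'$ share no leaf reduction at all, so one cannot delete a single leaf from all three trees simultaneously. The workaround would be to collapse only the cherry that the rotation $T\lessdot S$ newly creates in $S$ and $T'$, and the delicate step is checking that after this collapse the one surviving constraint ``$w$ parses $T$'' (identified with $w(X)=w(Z)$ through Proposition~\ref{prop: cover parse word}) is satisfied by exactly one of the two lifts of each $\hat w$; this is precisely why it is essential to verify that $\hat a_c$ and $\hat Z$ end up as siblings in $\hat S$.
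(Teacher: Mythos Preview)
Your proof is correct and shares its scaffolding with the paper's argument (induction, the base case $k=1$ via Proposition~\ref{prop: cover parse word}, and essential use of Theorem~\ref{thm: PWchain} to insert an intermediate tree), but the decisive step is handled by a genuinely different mechanism. The paper first collapses all shared cherries to reduce to $T=\RCT{n}$, then takes the \emph{top} cover $T_{k-1}\lessdot T'$ and constructs an explicit colour-swapping involution $\phi$ on $\PW(T,T_{k-1})$, namely $w_Sw_Xw_Yw_Z\mapsto w_Sw_X\sigma(w_Y)\sigma(w_Z)$ with $\sigma=(w(Y)\;w(Z))$; this exchanges $\PW(T,T_{k-1},T')$ with its complement in $\PW(T,T_{k-1})$ and thereby halves the count. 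You instead work from the \emph{bottom} cover $T\lessdot S$ and, in the key Case~2, collapse the cherry $(a_ca_{c+1})$ that the rotation has just created, but only in $S$ and $T'$; the sibling relation between $\hat a_c$ and the root of $\hat Z$ in $\hat S$ then guarantees that exactly one of the two lifts of each $\hat w\in\PW(\hat S,\hat T')$ meets the condition $w(X)=w(Z)$. The paper's involution is a single uniform device requiring the preliminary reduction to $T=\RCT{n}$ (which is where $\phi(w)\in\PW(T)$ is verified), while your lifting argument avoids that reduction and the construction of $\phi$ entirely, at the cost of a case split; both routes ultimately rest on the same characterization of $\PW(T,S)$ inside $\PW(S)$ coming from Proposition~\ref{prop: cover parse word}. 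One small remark: your induction is really on $n$ alone (Case~1 reduces $(n,k)$ to $(n-1,k)$ and Case~2 to $(n-1,k-1)$), so the phrase ``secondary induction on $n$'' undersells it.
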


\begin{proof}
	One proceeds by induction. Proposition~\ref{prop: cover parse word} addresses the case $k=1$. Via repeated leaf reductions, one may assume $T=\RCT{n}$. Now suppose the statement holds for $k-1$, that $T<T'$ and $\rk (T')-\rk(T)=k$. One has a chain in $\mathscr{C}_n$, $T\lessdot T_1 \lessdot T_2 \lessdot \cdots \lessdot T_{k-1} \lessdot T'$. By induction $|\PW(T,T_{k-1})|=3\cdot 2^{n-k}$. One constructs a bijection
	\[
	\PW(T,T_{k-1},T')\to \PW(T,T_{k-1})\smallsetminus \PW(T,T_{k-1},T').		\]

	First, one characterizes those parsewords in $\PW(T,T_{k-1},T')$. One knows that $T_{k-1}$ and $T'$ differ by a right arm rotation.
	\begin{center}
	\begin{tikzpicture}
    \tikzstyle{every node}=[draw,circle,fill=white,minimum size=10pt,
                            inner sep=0pt]

    \draw (-4,1) node (02) [label=below:$X$]{};
    \draw (-3,0) node (03) [label=below:$Y$]{};
    \draw (-1,0) node (04) [label=below:$Z$]{};
    \draw (-4,3) node (07) [label=above:$S$]{};
    
    \tikzstyle{every node}=[draw,circle,fill=black,minimum size=4pt,
                            inner sep=0pt]

    \draw (-3,2) node (05) {};
    \draw (-2,1) node (06) {};
    \draw (05) -- (02);
    \draw (05) -- (06);
    \draw (06) -- (03);
    \draw (06) -- (04);
    \draw (05) -- (07);

    \tikzstyle{every node}=[draw,circle,fill=white,minimum size=10pt,
                            inner sep=0pt]

    \draw (1,0) node (12) [label=below:$X$]{};
    \draw (3,0) node (13) [label=below:$Y$]{};
    \draw (4,1) node (14) [label=below:$Z$]{};
    \draw (2,3) node (17) [label=above:$S$]{};
    
    \tikzstyle{every node}=[draw,circle,fill=black,minimum size=4pt,
                            inner sep=0pt]

    \draw (3,2) node (15) {};
    \draw (2,1) node (16) {};
    \draw (15) -- (14);
    \draw (15) -- (16);
    \draw (16) -- (12);
    \draw (16) -- (13);
    \draw (15) -- (17);
    
    \path[draw,thick,->] (-1,1.5) -- (1,1.5);
    
    \tikzstyle{every node}=[draw=none,circle,fill=none,minimum size=0pt,
                            inner sep=0pt]

    \draw (-4.2,-0.5) node (T1) {\Large$T_{k - 1}$};
    \draw (4,-0.5) node (T2) {\Large$T'$};

\end{tikzpicture}	
	\end{center}
	From Proposition~\ref{unwritten}, one has that $T'$ also parses $w \in \PW(T,T_{k-1})$ (i.e.\ $w \in \PW(T,T_{k-1},T')$) if and only if $w(X) \ne w(Y)$.

	Now define the map
	\[
	\phi \colon \PW(T,T_{k-1})\to \PW(T,T_{k-1})
	\]
	as follows. Suppose $w\in \PW(T,T_{k-1})$. Then $w=w_Sw_Xw_Yw_Z$, where $w_J$ is the word parsed by the leaves of the subtree $J$. Define a transposition in $\mathfrak{S}_{\{0,1,2\}}$ by $\sigma=(w(Y),w(Z))$. Then define $\phi(w)=w_Sw_X\sigma(w_Y)\sigma(w_Z)$. One needs to show that $\phi(w) \in \PW(T,T_{k-1})$.

	On the one hand, $\sigma$ permutes the alphabet within the smallest subtree of $T_{k-1}$ containing both $Y$ and $Z$, while leaving the label of its root unchanged, so $\phi(w)$ is certainly parsed by $T_{k-1}$. Recall that $T$ was assumed to be $\RCT{n}$. Labeling $T$ with $w$ gives
	\begin{center}
		\begin{tikzpicture}[scale=0.4]
\foreach \x in {0,1,2}
{
	\node at (\x-1,-1-\x) [fill,circle,inner sep=1.2pt] {};
	\node at (\x,-\x) [fill,circle,inner sep=1.2pt] {};
}
\foreach \y in {0,1,2}
{
	\draw (\y-1,-1-\y)--(\y,-\y);
}
\draw (0,0)--(1,-1);
\draw (1,-1)--(2,-2);

\foreach \z in {4,5,6,7}
{
	\node at (\z-1,-1-\z) [fill,circle,inner sep=1.2pt] {};
	\node at (\z,-\z) [fill,circle,inner sep=1.2pt] {};
}
\foreach \w in {4,5,6,7}
{
	\draw (\w-1,-1-\w)--(\w,-\w);
}
\foreach \a in {4,5,6}
{
	\draw (\a,-\a)--(\a+1,-\a-1);
}
\draw[dotted] (2,-2)--(4,-4);

\foreach \b in {5,10}
{
\foreach \z in {4,5,6,7}
{
	\node at (\z+\b-1,-1-\z-\b) [fill,circle,inner sep=1.2pt] {};
	\node at (\z+\b,-\z-\b) [fill,circle,inner sep=1.2pt] {};
}
\foreach \w in {4,5,6,7}
{
	\draw (\w+\b-1,-1-\w-\b)--(\w+\b,-\w-\b);
}
\foreach \a in {4,5,6}
{
	\draw (\a+\b,-\a-\b)--(\a+\b+1,-\a-\b-1);
}
}
\draw[dotted] (7,-7)--(9,-9);
\draw[dotted] (12,-12)--(14,-14);
\draw [decorate,decoration={brace,amplitude=5pt},xshift=-4pt,yshift=-9pt]
   (3,-5)  -- (-1,-1) 
   node [black,midway,below=4pt,xshift=-2pt] {\footnotesize $w_S$};
 \draw [decorate,decoration={brace,amplitude=5pt},xshift=-4pt,yshift=-9pt]
   (8,-10)  -- (4,-6) 
   node [black,midway,below=4pt,xshift=-2pt] {\footnotesize $w_X$};
\draw [decorate,decoration={brace,amplitude=5pt},xshift=-4pt,yshift=-9pt]
   (13,-15)  -- (9,-11) 
   node [black,midway,below=4pt,xshift=-2pt] {\footnotesize $w_Y$};
   \draw [decorate,decoration={brace,amplitude=5pt},xshift=-4pt,yshift=-9pt]
   (19,-21)  -- (14,-16) 
   node [black,midway,below=4pt,xshift=-2pt] {\footnotesize $w_Z$};

 \draw[dotted] (17,-17)--(19,-19);
\node at (19,-19) [fill,circle,inner sep=1.2pt] {};
\node at (18,-20) [fill,circle,inner sep=1.2pt] {};
\node at (20,-20) [fill,circle,inner sep=1.2pt] {};
\draw (18,-20)--(19,-19);
\draw (20,-20)--(19,-19);
\end{tikzpicture}
	\end{center}
Proposition~\ref{prop: common root} means that when the subtree of $T$ containing the leaves of $T_{k-1}$'s $Y$ and $Z$ subtrees is fully labeled, the root of this subtree receives the same label as the root of the smallest subtree of $T_{k-1}$ containing both $Y$ and $Z$, call it $w(YZ)$ and another right arm vertex receives the label $w(Z)$:
	\begin{center}
		\begin{tikzpicture}[scale=0.4]
\foreach \x in {0,1,2}
{
	\node at (\x-1,-1-\x) [fill,circle,inner sep=1.2pt] {};
	\node at (\x,-\x) [fill,circle,inner sep=1.2pt] {};
}
\foreach \y in {0,1,2}
{
	\draw (\y-1,-1-\y)--(\y,-\y);
}
\draw (0,0)--(1,-1);
\draw (1,-1)--(2,-2);

\foreach \z in {4,5,6,7}
{
	\node at (\z-1,-1-\z) [fill,circle,inner sep=1.2pt] {};
	\node at (\z,-\z) [fill,circle,inner sep=1.2pt] {};
}
\foreach \w in {4,5,6,7}
{
	\draw (\w-1,-1-\w)--(\w,-\w);
}
\foreach \a in {4,5,6}
{
	\draw (\a,-\a)--(\a+1,-\a-1);
}
\draw[dotted] (2,-2)--(4,-4);

\node at (8,-10) [fill,circle,inner sep=1.2pt] {};
\node at (9,-9) [fill,circle,inner sep=1.2pt] {};
\node at (10,-10) [fill,circle,inner sep=1.2pt] {};
\draw (8,-10)--(9,-9);
\draw (9,-9)--(10,-10);
\draw [dotted] (7,-7)--(9,-9);

\draw [decorate,decoration={brace,amplitude=5pt},xshift=-4pt,yshift=-9pt]
   (3,-5)  -- (-1,-1) 
   node [black,midway,below=4pt,xshift=-2pt] {\footnotesize $w_Y$};
 \draw [decorate,decoration={brace,amplitude=5pt},xshift=-4pt,yshift=-9pt]
   (9,-11)  -- (4,-6) 
   node [black,midway,below=4pt,xshift=-2pt] {\footnotesize $w_Z$};

   \draw (0,0) node [above] {\footnotesize{$w(YZ)$}};
   \draw (5,-5) node [above right] {\footnotesize{$w(Z)$}};
\end{tikzpicture}
	\end{center}

Since $w(YZ)$ is equal to neither $w(Y)$ nor $w(Z)$, it is fixed by $\sigma$. Consequently, applying $\sigma$ to the subtree of $T$ consisting of those vertices in $Y$ and $Z$ has the same effect on parsewords as applying $\sigma$ to $Y$ and $Z$ in $T_{k-1}$. In other words, $\phi(w)$ is parsed by $T$, so the map is well-defined.

Then $\phi$ is transparently a bijection 
\[
	\PW(T,T_{k-1})\to \PW(T,T_{k-1})
\]
and, moreover, exchanges $\PW(T,T_{k-1},T')$ and $\PW(T,T_{k-1})\smallsetminus \PW(T,T_{k-1},T')$: if $w \in \PW(T,T_{k-1},T')$, then $\phi(w)$ has $\phi(w)(X)=\phi(w)(Y)$, meaning it cannot parse $T'$. Thus, $\phi$ is a bijection
	\[
	\PW(T,T_{k-1},T')\to \PW(T,T_{k-1},T')\smallsetminus \PW(T,T_{k-1},T').		\]
	Consequently, $\PW(T,T_{k-1},T')$ contains precisely half the parsewords of $\PW(T,T_{k-1})$, i.e.\ there are $3\cdot 2^{n-1-k}$ of them. From Theorem~\ref{thm: PWchain}, one has $\PW(T,T_{k-1},T')=\PW(T,T')$, so $|\PW(T,T')|=3\cdot 2^{n-1-k}$, as desired.
\end{proof}

\begin{cor}\label{cor: commonincomb}
Since $k \leq n - 2$ by Proposition \ref{prop: maxrank}, any pair of trees comparable in $\mathscr{C}_n$ has a common parse word.
\end{cor}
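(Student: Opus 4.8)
The plan is to derive the corollary directly from Theorem~\ref{thm: comparable PW} together with the bound on the maximal rank of $\mathscr{C}_n$ established in Proposition~\ref{prop: maxrank}. First I would dispose of the trivial case $T = T'$: there $\PW(T,T') = \PW(T)$, which has cardinality $3 \cdot 2^{n-1} \geq 1$ by Corollary~\ref{prop: constant PW(T)}, so such a pair certainly has a common parse word.

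For the main case $T < T'$, set $k = \rk(T') - \rk(T) \geq 1$. Theorem~\ref{thm: comparable PW} gives $|\PW(T,T')| = 3 \cdot 2^{n-1-k}$, so it suffices to check that the exponent $n-1-k$ is nonnegative; granting that, $|\PW(T,T')| \geq 3 > 0$ and we are done. By Proposition~\ref{prop: maxrank}, every element of $\mathscr{C}_n$ has rank at most $n-2$, whence $\rk(T') \leq n-2$; since $\rk(T) \geq 0$, this yields $k \leq n-2$, and therefore $n-1-k \geq 1 \geq 0$, as required.

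There is no real obstacle here — the only point to be careful about is that the conclusion genuinely depends on the maximal rank of $\mathscr{C}_n$ being $n-2$ rather than $n-1$: a hypothetical tree of rank $n-1$ would force $n-1-k$ to be negative for $k = n-1$, rendering the formula of Theorem~\ref{thm: comparable PW} vacuous. Proposition~\ref{prop: maxrank} is exactly what excludes this, so the inequality $k \leq n-2$ is precisely the input needed to conclude $|\PW(T,T')| \geq 1$ for every comparable pair.
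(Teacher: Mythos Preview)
Your argument is correct and is exactly the reasoning the paper intends: the corollary is stated without proof precisely because it is immediate from Theorem~\ref{thm: comparable PW} and the rank bound $k\leq n-2$ of Proposition~\ref{prop: maxrank}, which together give $|\PW(T,T')|=3\cdot 2^{n-1-k}\geq 3$. Your separate handling of the case $T=T'$ via Corollary~\ref{prop: constant PW(T)} is a sensible completeness check, though the paper tacitly folds this into the general statement.
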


\begin{thm} \label{thm: join-meet}
Suppose $T_1$ and $T_2$ have an upper bound in $\mathscr{C}_n$. Then, $$\PW(T_1,T_2)=\PW(T_1\wedge T_2,T_1\vee T_2).$$
\end{thm}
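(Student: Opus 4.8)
Write $m:=T_1\wedge T_2$ and $M:=T_1\vee T_2$; the goal is the pair of inclusions $\PW(m,M)\subseteq \PW(T_1,T_2)$ and $\PW(T_1,T_2)\subseteq \PW(m,M)$. If $T_1$ and $T_2$ are comparable there is nothing to prove, so we may assume $m<T_1<M$ and $m<T_2<M$. By Corollary~\ref{cor: distributive lattice}, $RP_m=RP_{T_1}\cap RP_{T_2}$ and $RP_M=RP_{T_1}\cup RP_{T_2}$, and by Corollary~\ref{cor: r + s min length}(i) the pairs of $RP_{T_1}\setminus RP_m$ are pairwise disjoint from the pairs of $RP_{T_2}\setminus RP_m$. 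For the inclusion $\PW(m,M)\subseteq \PW(T_1,T_2)$, I would simply apply Theorem~\ref{thm: PWchain} to the chains $m<T_1<M$ and $m<T_2<M$ to get $\PW(m,M)=\PW(m,T_1,M)=\PW(m,T_2,M)$, whence $\PW(m,M)\subseteq \PW(T_1)\cap\PW(T_2)=\PW(T_1,T_2)$.

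The reverse inclusion I would split into $\PW(T_1,T_2)\subseteq \PW(M)$ and $\PW(T_1,T_2)\subseteq \PW(m)$. For the first, fix a saturated chain $T_1=S_0\lessdot S_1\lessdot\cdots\lessdot S_s=M$ obtained by adjoining the pairs of $RP_{T_2}\setminus RP_m$ one at a time in an order refining containment (this is legal because those pairs are disjoint from the pairs of $RP_{T_1}\setminus RP_m$, so each $RP_{S_t}$ is a legitimate reduced parenthesization below $M$). One then shows by induction on $t$ that every $w\in\PW(T_1)\cap\PW(T_2)$ lies in $\PW(S_t)$: the cover $S_t\lessdot S_{t+1}$ adjoins a pair $J$, and because all pairs of $RP_{T_2}$ below $J$ have already been adjoined while no pair of $RP_{T_1}\setminus RP_m$ lies below $J$, the two factors $X,Y$ enclosed by $J$ in $S_{t+1}$ agree (as labeled subtrees, once $w$ is placed on the leaves) with the two children's subtrees at $J$'s node in $T_2$; since $w$ parses $T_2$ this forces $w(X)\neq w(Y)$, so by Proposition~\ref{prop: cover parse word} $w$ parses $S_{t+1}$.

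For $\PW(T_1,T_2)\subseteq \PW(m)$ I would run the analogous argument downward, but with the trees $M_t:=R_t\vee T_2$ as auxiliary witnesses. Fix a saturated chain $T_1=R_0\gtrdot R_1\gtrdot\cdots\gtrdot R_r=m$ deleting the pairs of $RP_{T_1}\setminus RP_m$, inclusion‑maximal ones first. Each $M_{t+1}$ is obtained from $M_t$ by deleting the same pair $J$ that is deleted from $R_t$, and $J$ is maximal in $RP_{M_t}$ by the disjointness above, so $M_0=M\gtrdot M_1\gtrdot\cdots\gtrdot M_r=T_2$ is a saturated chain in the interval $[T_2,M]$. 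Given $w\in\PW(T_1,T_2)$, the previous paragraph shows $w\in\PW(M)$, and then Theorem~\ref{thm: PWchain} applied to $T_2\le M_t\le M$ gives $w\in\PW(M_t)$ for every $t$. Now induct on $t$ to prove $w\in\PW(R_t)$: assuming $w\in\PW(R_t)$, Proposition~\ref{prop: cover parse word} says $w\in\PW(R_{t+1})$ is equivalent to an inequality $w(Y)\neq w(Z)$ for the two subtrees $Y,Z$ of $R_t$ involved in the rotation $R_{t+1}\lessdot R_t$; the facts $w\in\PW(M_t)$ and $w\in\PW(M_{t+1})$ give, via Proposition~\ref{prop: cover parse word} applied to $M_{t+1}\lessdot M_t$, the analogous inequality inside $M_t$, and one transfers it back to $R_t$ using the common root property (Proposition~\ref{prop: common root}) to match root labels of subtrees spanning the same blocks of leaves in $R_t$ and in $M_t$. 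Taking $t=r$ yields $w\in\PW(m)$, completing the proof.

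\textbf{The main obstacle} is precisely this last transfer: one must check that the subtrees $Y,Z$ produced by the rotation $R_{t+1}\lessdot R_t$ really do occupy the same leaf blocks as the corresponding subtrees of $M_t$, so that the inequality $w(Y)\neq w(Z)$ survives the passage between the two trees. This is exactly where Corollary~\ref{cor: r + s min length}(i) is indispensable, and it is probably cleanest to first use repeated leaf reductions (Proposition~\ref{prop: leaf collapse}) to reduce to the case $m=\RCT{n}$, where $RP_{T_1}$ and $RP_{T_2}$ themselves have disjoint leaf supports and the bookkeeping becomes transparent.
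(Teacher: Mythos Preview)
Your argument is correct and takes a genuinely different route from the paper. Both proofs handle the inclusion $\PW(m,M)\subseteq\PW(T_1,T_2)$ identically, via Theorem~\ref{thm: PWchain}. For the reverse inclusion the paper inducts on $n$: if $T_1,T_2$ share a leaf reduction one passes to fewer leaves; otherwise (after reducing to $m=\RCT{n}$) one locates the maximal pair of $RP_{T_1}$ enclosing $a_{n-1}$, observes that the subtrees of $T_1,T_2,m,M$ on the leaf set $\{a_j,\ldots,a_n\}$ match up in pairs, collapses them, and invokes the inductive hypothesis. You instead stay inside $\mathscr{C}_n$ and walk along two saturated chains: up from $T_1$ to $M$ adjoining the pairs of $RP_{T_2}\setminus RP_m$, and then down from $T_1$ to $m$ with the parallel descending chain $M_t=R_t\vee T_2$ supplying, at each step, the inequality $w(Y)\neq w(Z)$ that Proposition~\ref{prop: cover parse word} needs. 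The transfer you flag as the main obstacle does go through once one reduces to $m=\RCT{n}$: then every pair of $RP_{T_1}$ is disjoint from every pair of $RP_{T_2}$, so the deleted pair $J$ is maximal in $RP_{M_t}$, has identical internal structure in $R_t$ and $M_t$, and the ``rest'' subtree $Z$ spans the same leaf block in both trees, whence Proposition~\ref{prop: common root} identifies the relevant root labels. Your approach trades the paper's structural case split for bookkeeping about which subtrees coincide across the two chains; the paper's induction is shorter to write down, while yours makes more transparent why a common parseword of $T_1$ and $T_2$ is forced to parse every tree in the interval $[m,M]$.
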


\begin{proof}
  The statement is clear when $T_1$ and $T_2$ are comparable, so assume $T_1$ and $T_2$ are not comparable. 
  By Theorem~\ref{thm: PWchain}, 
      \[
  	\PW(T_1\wedge T_2,T_1\vee T_2,T_1)=\PW(T_1\wedge T_2,T_1\vee T_1)=\PW(T_1\wedge T_2,T_1\vee T_2,T_2).
  	\]
	One then immediately has that $\PW(T_1\wedge T_2,T_1 \vee T_2)\subset \PW(T_1,T_2)$.
	
	All that remains is to show inclusion the other way. Suppose the theorem holds for trees with $k<n$ leaves. Without loss of generality, one may assume that $T_1 \wedge T_2=\RCT{n}$ by making repeated leaf reductions in $T_1$ and $T_2$.  There are two cases:
	\begin{enumerate}
		\item Suppose $T_1$ and $T_2$ share a leaf reduction at, say, $i$. Then $T_1\wedge T_2$ and $T_1 \vee T_2$ must also share this leaf reduction. Then,
		  \begin{align*}
		  \PW(T_1\wedge T_2,T_1\vee T_2)&=f^<_i(\PW(\widehat{T_1\wedge T_2},\widehat{T_1\vee T_2}))\\
		  &\phantom{=}\sqcup f^>_i(\PW(\widehat{T_1\wedge T_2},\widehat{T_1\vee T_2}))\\
		  &=f^<_i(\PW(\hat{T_1},\hat{T_2}))\sqcup f^>_i(\PW(\hat{T_1},\hat{T_2}))\\
		  &=\PW(T_1,T_2),
		\end{align*}
		  as desired, with the inductive hypothesis being used in the second equality.
		\item Suppose, on the other hand, that no such common leaf reduction exists.
		Then one must have that $T_1 \wedge T_2 = \RCT{n}$. Since $T_1 \vee T_2$ exists, one must then have that all parenthesis pairs in $RP_{T_1}$ and $RP_{T_2}$ are disjoint. Suppose $w=w_1w_2\cdots w_n \in \PW(T_1,T_2)$. Then, since $T_1 \meet T_2$ is $\RCT{n}$, either $RP_{T_1}$ or $RP_{T_2}$ contains a parenthesis pair enclosing $a_{n-1}$, else both trees would have a leaf reduction at $(n-1,n)$. Without loss of generality, assume $RP_{T_1}$ contains a parenthesis pair enclosing $a_{n-1}$. Moreover, $RP_{T_1}$ has a maximal parenthesis pair enclosing the leaves $a_j,\ldots,a_{n-1}$. 
		  Then, since all parenthesis pairs in $RP_{T_1}$ and $RP_{T_2}$ are disjoint, none of $a_j,\ldots, a_{n-1}$ are enclosed by a parenthesis pair in $RP_{T_2}$. Consequently, the subtrees of $T_2$ and $T_1 \meet T_2$ with leaf set $a_j,\ldots,a_n$ are both isomorphic to $\RCT{n-j+1}$. Call this subtree $X_1$.
		  By the maximality of the parenthesis pair containing $a_j,\ldots,a_{n-1}$, one has that $T_1$ and $T_1 \join T_2$ have isomorphic subtrees whose leaf sets are $a_j,\ldots,a_{n}$, call this subtree $X_2$. Consequently, $w_j\cdots w_n$ is parsed by the subtree containing $a_j,\ldots,a_n$ in $T_1,T_2,T_1\wedge T_2$ and $T_1\vee T_2$. Then, from Proposition~\ref{prop: common root}, one has that $w(X_1)=w(X_2)$. Collapse the subtrees $X_1$ and $X_2$ to obtain $T_1',T_2', T_1'\wedge T_2'$ and $T_1'\vee  T_2'$. By induction, $\PW(T_1',T_2')=\PW(T_1'\wedge T_2',T_1'\vee T_2')$, meaning $w_1\cdots w_{j-1}w({X_1})=w_1\cdots w_{j-1}w({X_2})$ is parsed by $T_1'\meet T_2'$ and $T_1' \join T_2'$. It is then easy to see that this implies $w$ lies in $\PW(T_1\wedge T_2, T_1 \vee T_2)$, as desired.
	\end{enumerate}
\end{proof}

\begin{remark}
  If $T_1$ and $T_2$ have an upper bound in $\mathscr{C}_n$, and $\rk(T_1\join T_2)-\rk(T_1\meet T_2)=k$, combining Theorem~\ref{thm: join-meet} and Theorem~\ref{thm: comparable PW}, one has
  \[
	|\PW(T_1,T_2)|=|\PW(T_1\meet T_2,T_1\join T_2)|=3\cdot 2^{n-1-k}.
  \]
\end{remark}

\section*{Acknowledgements}
This research was carried out at the School of Mathematics, University of Minnesota - Twin Cities, under the supervision of Vic Reiner and Dennis Stanton with funding from NSF grant DMS-1001933. We would like to thank Vic Reiner and Dennis Stanton for their invaluable guidance and support, Bobbe Cooper for giving a talk introducing us to this problem, Nathan Williams for his words of advice and continual assessment of our progress, as well as Alan Guo and an anonymous referee for their input in the proof of Theorem \ref{thm: |rank|}. We would also like to thank the referee for their very thoughtful and detailed comments.

\bibliography{combtamari}
\nocite{friedmantamari}
\bibliographystyle{habbrv}
\end{document}